\documentclass[11pt]{amsart}

\usepackage[T1]{fontenc}
\usepackage{lmodern}
\usepackage{microtype}
\usepackage{amsmath,amssymb,amsthm,mathtools,mathrsfs}
\usepackage{booktabs}
\usepackage{enumitem}
\usepackage{xcolor}
\usepackage{geometry}
\usepackage{aliascnt}
\usepackage{hyperref}
\usepackage[nameinlink,capitalise,noabbrev]{cleveref}
\usepackage{array}

\hypersetup{
  colorlinks=true,
  linkcolor=blue!55!black,
  citecolor=blue!55!black,
  urlcolor=blue!55!black,
  pdfauthor={Gerald H\"ohn},
  pdftitle={The Hall--Janko Simple Group and the Icosian Leech Lattice}
}

\newtheorem{theorem}{Theorem}[section]
\newaliascnt{proposition}{theorem}
\newtheorem{proposition}[proposition]{Proposition}
\aliascntresetthe{proposition}
\newaliascnt{lemma}{theorem}
\newtheorem{lemma}[lemma]{Lemma}
\aliascntresetthe{lemma}
\newaliascnt{corollary}{theorem}
\newtheorem{corollary}[corollary]{Corollary}
\aliascntresetthe{corollary}
\newaliascnt{conjecture}{theorem}

\aliascntresetthe{conjecture}
\theoremstyle{definition}
\newaliascnt{definition}{theorem}
\newtheorem{definition}[definition]{Definition}
\aliascntresetthe{definition}
\newaliascnt{construction}{theorem}

\aliascntresetthe{construction}
\newaliascnt{problem}{theorem}
\newtheorem{problem}[problem]{Problem}
\aliascntresetthe{problem}
\theoremstyle{remark}
\newaliascnt{remark}{theorem}
\newtheorem{remark}[remark]{Remark}
\aliascntresetthe{remark}

\newcommand{\Z}{\mathbb Z}
\newcommand{\Q}{\mathbb Q}
\newcommand{\R}{\mathbb R}
\newcommand{\C}{\mathbb C}
\newcommand{\Hh}{\mathbb H}
\newcommand{\F}{\mathbb F}
\newcommand{\A}{\mathbb A}
\newcommand{\cO}{\mathcal O}
\newcommand{\cI}{\mathcal I}
\newcommand{\cG}{\mathcal G}
\newcommand{\cH}{\mathcal H}
\newcommand{\Aut}{\operatorname{Aut}}
\newcommand{\Tr}{\operatorname{Tr}}
\newcommand{\tr}{\operatorname{tr}}
\newcommand{\trd}{\operatorname{trd}}
\newcommand{\nrd}{\operatorname{nrd}}
\newcommand{\Mass}{\operatorname{Mass}}
\newcommand{\Min}{\operatorname{Min}}
\newcommand{\Span}{\operatorname{span}}
\newcommand{\Herm}{\operatorname{Herm}}
\newcommand{\SL}{\operatorname{SL}}
\newcommand{\Sp}{\operatorname{Sp}}
\newcommand{\Fr}{\operatorname{Fr}}
\newcommand{\HJ}{L_{\mathrm T}}
\newcommand{\Ls}{L_{\mathrm{spl}}}
\newcommand{\dK}{\mathfrak d_K}
\newcommand{\PP}{\mathbb P}
\newcommand{\eK}{\mathbf e_K}

\title[Hall--Janko and the icosian Leech lattice]{The Hall--Janko Simple Group\\and the Icosian Leech Lattice}
\author{Gerald H\"ohn}
\address{Department of Mathematics, Kansas State University, Manhattan, Kansas 66506, USA}
\email{gerald@monstrous-moonshine.de}
\date{September 2026}

\begin{document}

\begin{abstract}
We give a unified arithmetic and geometric treatment of the Hall--Janko
simple group and the icosian Leech lattice.  Let~$\cI$ be the standard
icosian maximal order over $K=\Q(\sqrt5)$.  Following the
mass-formula and Venkov framework for the Niemeier and Leech lattices,
we classify positive definite unimodular Hermitian~$\cI$-lattices of
quaternionic rank~$3$.

Hashimoto's mass formula, the Siegel--Weil average, and Gundlach's
structure theorem determine the scalar theta series of every nonsplit
class.  A two-place harmonic separation of the Leech minimal shell
proves that its three Hermitian components are quaternionic projective~$5$-designs.  For the norm-$2$ shell, integrality and projective moments
recover the complete angle scheme and its~$525$ orthogonal frames.
The finite code of one frame then saturates the residual mass, giving
exactly the split and Tits classes and an independent computation of
the order of $\Aut_{\cI}(L_{\mathrm T})\cong2.J_2$.

Finally, a quaternionic line system with the prescribed five projective
inner products has at most~$315$ points, with equality precisely for
the Hall--Janko configuration up to~$P\Sp(3)$.  Equality in Hoggar's
bound supplies the design property; Cohen--Tits uniqueness, golden-angle
fission, and the centered cubic moment give geometric rigidity.
\end{abstract}

\maketitle

\section{Introduction}\label{sec:introduction}

\subsection*{From the Hall--Janko group to the icosian Leech lattice}

The purpose of this paper is to give a unified arithmetic and geometric
treatment of the Hall--Janko simple group and the icosian Leech lattice,
analogous to the standard treatment of the Conway groups through the
Leech lattice.

Janko predicted the simple group now denoted~$J_2$ from its involution
centralizer, and Hall and Wales constructed the group of order
\[
 |J_2|=604\,800=2^7 3^3 5^2 7
\]
as a rank-$3$ permutation group on~$100$ points \cite{Janko,HallWales}.
Wales embedded it in~$G_2(4)$ and gave explicit generators \cite{Wales}.
The Schur multiplier has order~$2$, and Lindsey found the faithful
degree-$6$ representation of the double cover~$2.J_2$, first announced
in 1968 and developed explicitly in 1970
\cite{LindseyAnnouncement,Lindsey}.  His later work produced a different~$24$-dimensional integral lattice invariant under~$2.J_2$
\cite{LindseyLattice}.  After splitting the quaternion algebra, the
rank-$3$ representation considered below becomes Lindsey's symplectic
representation,
\[
 2.J_2<\Sp_6(\C)<F_4(\C)<E_6(\C),
\]
and its occurrence in the exceptional complex groups is recorded in the
finite-subgroup analysis of Cohen--Wales \cite{CohenWales}.

Tits's construction gives a more arithmetic realization.  Let
\[
 K=\Q(\sqrt5),
 \qquad
 \tau=\frac{1+\sqrt5}{2},
 \qquad
 D=\left(\frac{-1,\,-1}{K}\right),
\]
where $D$ is the Hamilton quaternion algebra over~$K$, and let
$\cI\subset D$ be an icosian maximal order over~$\cO_K=\Z[\tau]$.
Tits constructs an
explicit self-dual Hermitian~$\cI$-lattice~$L_{\mathrm T}$ of quaternionic
rank~$3$ whose associated trace lattice is the Leech lattice
\cite{Tits}.  Cohen's classification places the corresponding finite
group among the quaternionic reflection groups \cite{CohenReflections},
while Wilson identifies and analyzes the~$2.J_2$ symmetry and its
distinguished quaternionic configurations \cite{Wilson,WilsonBook}.

The results assembled here are largely known in these separate forms.
We derive the shell geometry, the frame count, and the relevant
automorphism-group order from the arithmetic of the genus before using
the finite-group identification.

\subsection*{The arithmetic genus and its theta series}

The arithmetic classification has been approached from several
directions.  Costello--Hsia study the related rank-$12$ orthogonal setting
over~$\cO_K$, whereas Coulangeon treats unimodular quaternionic lattices
by neighbor methods and obtains the present rank-$3$ class-number result
\cite{CostelloHsia,Coulangeon94,Coulangeon95}.  Nebe places~$2.J_2$ in the
general theory of finite quaternionic matrix groups and, later, in the
theory of golden lattices \cite{NebeQuaternionic,NebeGolden}; Kirschmer
records the same quaternionic genus in his systematic small-class-number
classification \cite{Kirschmer}.

Our proof instead combines the Hashimoto--Shimura mass formula, the
Siegel--Weil average, Gundlach's ring of Hilbert modular forms, and the
finite~$\F_4$-shadow of an orthogonal frame.  The frame count required by
the mass argument is derived intrinsically from the projective shell
rather than imported from coordinates.  This gives the following
classification in the normalization used throughout the paper.

\begin{theorem}[mass-assisted~$C_3$ classification]\label{thm:main}
Let~$\cG_3$ be the genus of positive definite unimodular right~$\cI$-lattices of rank~$3$ in the standard Hermitian space.
Here~$\Aut_{\cI}(L)$ denotes the group of~$\cI$-linear isometries of~$L$.
Then
\[
 \cG_3=\{[\Ls],\,[\HJ]\},
 \qquad
 \Ls=\cI^3,
\]
and
\[
 \Aut_{\cI}(\Ls)\cong\SL_2(5)^3\rtimes S_3,
 \qquad
 \Aut_{\cI}(\HJ)\cong2.J_2.
\]
Their orders are~$10\,368\,000$ and~$1\,209\,600$, respectively.
\end{theorem}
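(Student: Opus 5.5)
The argument is a mass-formula computation: compute the mass of $\cG_3$, exhibit two classes whose reciprocal automorphism orders already account for all of it, and conclude that there are no further classes.

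\textbf{Step 1: the mass.} Using Hashimoto's formula for the genus of unimodular Hermitian $\cI$-lattices of rank $n$ (equivalently Shimura's formula for the compact form of $\Sp_{n}$ over $K$), I evaluate at $n=3$:
\[
\Mass(\cG_3)=\sum_{[L]\in\cG_3}\frac1{|\Aut_{\cI}(L)|}=\prod_{k=1}^{3}\frac{|\zeta_K(1-2k)|}{2}.
\]
The precise normalization is fixed by checking the cases $n=1,2$, where the classes are known: $\cI$ alone with unit group $\SL_2(5)$ of order $120$, and $\cI^2$. The needed values are
\[
\zeta_K(-1)=\tfrac1{30},\qquad \zeta_K(-3)=\tfrac1{60},\qquad \zeta_K(-5)=\tfrac{67}{630}.
\]
Finally I express the mass as a rational number to be compared with
\[
\frac1{10\,368\,000}+\frac1{1\,209\,600}.
\]

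\textbf{Step 2: the split class.} The minimal vectors of $\Ls=\cI^3$ are the vectors $e_i u$ with $u\in\cI^\times$, $|\cI^\times|=120$. These form three mutually orthogonal rank-one blocks. An $\cI$-linear isometry must permute the blocks and act on each by a left unit, so
\[
\Aut_{\cI}(\Ls)\cong\SL_2(5)^3\rtimes S_3,\qquad 120^3\cdot 6=10\,368\,000.
\]

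\textbf{Step 3: the Tits class.} I take Tits' explicit $\HJ$, so the class is nonsplit because its trace lattice is the Leech lattice and has no roots. Its automorphism order I compute intrinsically. Using the shell results announced in the introduction, the norm-$2$ shell contains $525$ orthogonal frames, and $\Aut_{\cI}(\HJ)$ acts transitively on them. The order is then $525$ times the stabilizer of one frame. That stabilizer is computed from the finite $\F_4$-code of the frame: it is the monomial subgroup $\{\pm1\}^3\cdot$(a frame-preserving code group), and one checks its order is $1\,209\,600/525=2304$. The identification with $2.J_2$ follows from Lindsey's representation, or from the order together with the known embedding.

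\textbf{Step 4: saturation.} If
\[
\frac1{10\,368\,000}+\frac1{1\,209\,600}
\]
equals the mass, then no third class can exist, which proves $\cG_3=\{[\Ls],[\HJ]\}$.

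The main obstacle is Step 3, specifically proving transitivity on the $525$ frames and computing the frame stabilizer without assuming the group is $2.J_2$. A miscount there breaks the exact mass equality. If transitivity is hard to show directly, I would instead sum $1/|\mathrm{Stab}|$ over frame orbits inside the mass calculation, using the theta-series constraint. That constraint comes from the Siegel–Weil average and Gundlach's structure theorem, and it forces any nonsplit class to have the Leech-type shell counts.
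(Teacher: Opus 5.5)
The gap is in Step 3, where you already suspected it. You assume that $\Aut_{\cI}(\HJ)$ acts transitively on the $525$ frames, and you obtain the frame-stabilizer order as $1\,209\,600/525$, that is, from the answer you are trying to prove.

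The stabilizer itself can be bounded from the code. An $\cI$-linear isometry fixing the coordinate frame $F_0$ is monomial, and its entries lie in $\cI^1$ because $\HJ\cap De_j=2\cI e_j$. It therefore preserves $(2\cI)^3$. Its reduction lies in the stabilizer of $C$ in $\SL_2(4)^3\rtimes S_3$, which is $2^4{:}(3\times S_3)$ of order $288$, and the kernel of reduction lies in $\{\pm1\}^3$. Hence $|\Aut_{\cI}(\HJ)_{F_0}|\le 2304$.

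Transitivity, however, does not follow from anything you have. Without it, orbit--stabilizer gives only $|\Aut_{\cI}(\HJ)|\le 525\cdot 2304$. Step 4 needs the exact order, so it cannot be run. Your fallback of summing $1/|\operatorname{Stab}|$ over frame orbits does not say where an inequality in the other direction would come from.

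The missing idea is to use the mass before the order is known, not as a final check. Since $\HJ$ is a nonsplit class,
$1/|\Aut_{\cI}(\HJ)|\le \Mass(\cG_3)-1/|\Aut_{\cI}(\Ls)|=1/1\,209\,600$.
Combined with the orbit--stabilizer upper bound, this forces $|\Aut_{\cI}(\HJ)|=1\,209\,600$. Then $\HJ$ exhausts the residual mass, so $\cG_3=\{[\Ls],[\HJ]\}$. Transitivity on the $525$ frames and sharpness of the stabilizer bound come out as consequences, not inputs.

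Second, your mass formula is off by a factor $2^3$. As displayed, $\prod_{k=1}^3|\zeta_K(1-2k)|/2=67/9\,072\,000$, whereas $1/10\,368\,000+1/1\,209\,600=67/72\,576\,000$, so Step 4 would fail. For $[K:\Q]=2$ the Hashimoto--Shimura normalization is $\Mass_m=2^{-2m}\prod_{j=1}^m|\zeta_K(1-2j)|$. Your rank-one check would catch this: your formula gives $1/60$, the Eichler mass that quotients by $\pm1$, instead of $1/|\cI^1|=1/120$. Calibrating on rank two would be circular, since class number one there is itself deduced from the mass.

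Two smaller points:
\begin{itemize}
\item The norm-one vectors of $\Ls$ are $e_iu$ with $u\in\cI^1$, not $\cI^\times$, which is infinite.
\item The order $1\,209\,600$ alone does not identify the group. You need an explicit copy of $2.J_2$ inside $\Aut_{\cI}(\HJ)$, such as Tits's group generated by the reflections $v\mapsto v-xh_{\mathrm T}(x,v)$ in norm-$2$ vectors $x$. Equality of orders then gives the isomorphism.
\end{itemize}
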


The scalar Hilbert theta series separates the split class from every
other class before the class number is known.  The first Fourier
coefficient saturates the Eisenstein average, forcing every nonsplit
class to have no vectors of Hermitian norm~$1$; such a class is called
\emph{extremal}.  All nonsplit classes have the same theta series.
Write~$E_{k,\,K}$ for the parallel-weight-$k$ Hilbert Eisenstein series
normalized to have constant term~$1$.

\begin{theorem}[scalar Hilbert theta series]\label{thm:scalar-intro}
Every nonsplit class in~$\cG_3$ is extremal and has scalar Hilbert theta
series
\[
 \Theta_{\mathrm{ext}}
 =\frac{67E_{6,\,K}-7E_{2,\,K}^{3}}{60}.
\]
Writing~$r_{\mathrm{ext}}(a)$ for the number of vectors of Hermitian
norm~$a$, its first nonzero coefficients are
\[
 r_{\mathrm{ext}}(2)=37\,800,
 \qquad
 r_{\mathrm{ext}}(2+\tau)=120\,960,
 \qquad
 r_{\mathrm{ext}}(2\tau^2)=37\,800.
\]
The corresponding projective shells are sets of~$315$, $1008$, and~$315$ points, respectively.
\end{theorem}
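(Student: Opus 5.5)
The plan is to pin down $\Theta_L$ for a nonsplit class $L$ inside a two-dimensional space of symmetric Hilbert modular forms, using exactly two linear constraints. For any $L\in\cG_3$ the trace form $\Tr_{K/\Q}\circ\trd$ makes $L$ an even unimodular $\Z$-lattice of rank $24$. With respect to the Hermitian norm over $\cO_K$, the scalar theta series
\[
 \Theta_L(z)=\sum_{x\in L} e^{2\pi i\,\Tr(\langle x,x\rangle z)}
\]
is a symmetric Hilbert modular form of parallel weight $6$ for $\SL_2(\cO_K)$. The weight is the real dimension $12$ over each real place divided by $2$. By Gundlach's structure theorem, the symmetric weight-$6$ forms are spanned by $E_{2,K}^3$ and $E_{6,K}$; there is no weight-$4$ or odd-weight contribution in parallel weight $6$ except products of these, and the weight-$5$ form is antisymmetric. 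So the space is two-dimensional, and each $\Theta_L=aE_{6,K}+bE_{2,K}^3$ with $a+b=1$ from the constant term. One linear condition therefore determines $\Theta_L$.

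The condition will come from the coefficient at norm $1$, via the Siegel--Weil average. First, the mass-weighted average $\sum_L \Theta_L/|\Aut L|$ divided by the mass equals the Eisenstein series of weight $6$ attached to the genus. In parallel weight $6$ with trivial character and level one, this is the normalized Eisenstein series, so it is $E_{6,K}$ (the genus Eisenstein series is holomorphic here since the weight exceeds $2$). Second, I compute $r_{\mathrm{spl}}(1)$ for $\Ls=\cI^3$. The norm-$1$ vectors are $3\cdot 120=360$, namely the icosian units in each coordinate, and mixed vectors cannot reach norm $1$ because the nonzero elements of $\cI$ have totally positive norm $\ge1$ in trace. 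Third, I write the averaged coefficient at $1$ as
\[
 c_1(E_{6,K})=\frac{m_{\mathrm{spl}}\cdot 360+\sum_{\text{nonsplit}} m_L\,r_L(1)}{\Mass},
 \qquad m_L=1/|\Aut L|.
\]
Here I will use Hashimoto's mass formula for $\Mass(\cG_3)$ and the order $|\Aut\Ls|=120^3\cdot 6$. The split class alone should already account for the full Eisenstein coefficient. Checking this identity is the main computational obstacle: it needs the explicit Hashimoto mass, which involves $\zeta_K(-1),\zeta_K(-3),\zeta_K(-5)$, together with the Eisenstein coefficient $c_1(E_{6,K})$, which is $\sigma_5$-type divided by $\zeta_K(-5)$. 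Once it holds, nonnegativity of each $r_L(1)$ forces $r_L(1)=0$ for every nonsplit $L$, so every nonsplit class is extremal.

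Extremality gives the second linear condition $r_L(1)=0$. Solving it with $a+b=1$ and the norm-$1$ coefficients of $E_{6,K}$ and $E_{2,K}^3$ should give $a=67/60$, $b=-7/60$. From there I will expand the Fourier coefficients at the totally positive elements $2$, $2+\tau$ (up to conjugation) and $2\tau^2$, using divisor sums over $\cO_K$ for both Eisenstein series. This should give $37800$, $120960$ and $37800$. The values at norm $2$ and $2\tau^2$ agree because the Galois conjugate of $2\tau^2$ is $2\bar\tau^2$, and norm-$2\tau^2$ vectors correspond to norm-$2$ vectors via multiplication by the unit $\tau$; this serves as a consistency check. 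Finally, each shell is a union of free orbits of the unit group $\cI^\times$ of order $120$ acting by right multiplication. So the projective counts are $37800/120=315$, $120960/120=1008$ and $315$.
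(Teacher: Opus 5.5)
The route you propose for extremality and for $\Theta_{\mathrm{ext}}$ is essentially the paper's: Hashimoto's mass, the Siegel--Weil average, saturation of the coefficient at $a=1$ by $\Ls$ alone, and then $\dim M_6=2$ with $a=67/60$, $b=-7/60$. The predicted values $37\,800$, $120\,960$, $37\,800$ do come out of the coefficient extraction.

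The gap is in your last sentence. Dividing by $120$ only computes $|L_a/\cI^1|$, the number of right $\cI^1$-orbits. The theorem asserts more: these orbits are $315$, $1008$, $315$ \emph{distinct} points of $\Hh\PP^2$, so no quaternionic line carries two different orbits of norm-$a$ vectors. This is not automatic. In $\Ls$, the line $e_1D$ contains the $r_{\cI}(2)=600$ vectors $e_1c$ with $\nrd(c)=2$, and these form five distinct $\cI^1$-orbits. So the norm-$2$ projective shell of $\Ls$ is a genuine multiset, and extremality together with the arithmetic of $\cI$ must be used at this point. (A smaller slip: the group of order $120$ acting freely is the norm-one group $\cI^1$, not $\cI^\times$, which contains $\tau$ and is infinite.)

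The missing step is the paper's primitive-line lemma.
\begin{itemize}
\item For $x\in L_a$, the set $\mathfrak a=\{q\in D:xq\in L\}$ is a fractional right $\cI$-ideal containing $1$.
\item Right ideals of $\cI$ are principal, so $\mathfrak a=c\cI$ with $c^{-1}\in\cI$. Then $xc\in L$ has norm $a/b$, where $b=\nrd(c^{-1})\in\cO_K^+$ divides $a$.
\item The elements $2$ and $2+\tau$ are prime in $\cO_K$, and $2\tau^2$ is associate to $2$. So either $b$ is a unit, or $a/b=\tau^{2k}$.
\item If $b$ is a unit, then $L\cap xD=x\cI$, and the norm-$a$ vectors on the line are exactly $x\cI^1$.
\item If $a/b=\tau^{2k}$, then $xc\tau^{-k}$ is a vector of norm $1$, contradicting extremality.
\end{itemize}
You also need that if two vectors of $D^3$ lie on the same line of $\Hh^3$ at a real place, their ratio lies in $D$; this can be read off from any nonzero coordinate. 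Hence distinct orbits give distinct points at either real place. With this lemma added, your count gives the asserted sets of $315$, $1008$ and $315$ points.
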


This follows from \cref{prop:coefficient-saturation},
\eqref{eq:extremal-theta}, and the shell calculation in
\cref{sec:extremal-shell-counts}.

\subsection*{Projective designs and the Hall--Janko near octagon}

The geometry of the minimal shell connects several older constructions.
Hoggar developed linear-programming bounds and design theory for
quaternionic projective spaces
\cite{HoggarBounds,HoggarDesigns,HoggarParameters,HoggarTight}.  His
Example~3.9 is a system of~$315$ quaternionic lines whose projective inner
products---the squared absolute Hermitian inner products of unit representatives---are
\begin{equation}\label{eq:hoggar-angle-set-intro}
 \left\{0,\,\frac12,\,\frac14,\,
 \frac{3+\sqrt5}{8},\,
 \frac{3-\sqrt5}{8}\right\},
\end{equation}
and the annihilator
\begin{equation}\label{eq:hoggar-polynomial-intro}
 F(t)=\frac1{15}t(2t-1)(4t-1)(16t^2-12t+1)
\end{equation}
gives the upper bound~$|X|\le315$ for line systems with this prescribed
angle set \cite[Example~3.9]{HoggarBounds}.  Equality forces the
projective~$5$-design property, as we verify in
\cref{lem:hoggar-bound-equality}.  The Hall--Janko system is therefore
maximal for its angles.  It is not a tight projective~$5$-design: a
tight projective~$5$-design is a three-distance set, whereas the present
configuration has five off-diagonal projective inner products
\cite{HoggarTight}.

Independently, Cohen introduced the~$315$-point near octagon associated
with~$J_2$ \cite{CohenNearOctagon,CohenGeometry}, and Cohen--Tits proved
that every regular near octagon with parameters~$(2,\,4;0,\,3)$ is
isomorphic to it \cite{CohenTits}.  Tits and Wilson recorded the five
projective inner products in explicit icosian coordinates.  Here these
features are recovered from the arithmetic shell itself.

The trace lattice of an extremal class is the Leech lattice.  Its
harmonic theta identities, combined with a two-place radial separation,
give a rank-$3$ icosian analogue of Venkov's design theorem.

\begin{theorem}[rank-$3$ icosian Venkov theorem]\label{thm:venkov-intro}
Let~$L\in\cG_3$ be extremal.  The projectivized Hermitian shells
of norms
\[
 2,\qquad 2+\tau,\qquad 2\tau^2
\]
are quaternionic projective~$5$-designs in~$\Hh\PP^2$, with~$315$, $1008$, and~$315$ points, respectively.  Multiplication by~$\tau$ identifies the first and third projectively.
\end{theorem}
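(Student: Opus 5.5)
We prove the statement through the Venkov route: a shell is a projective $t$-design precisely when the sum of every harmonic test function over it vanishes. Write $V=\R\otimes_\Q D^3\cong\R^{24}$ for the ambient space of the trace lattice, with the two real places $\sigma_1,\sigma_2$ of $K$. A finite set $X\subset\Hh\PP^2$ is a projective $5$-design if and only if
\[
 \sum_{x\in X}P(x)=0
\]
for every $\Sp(3)$-harmonic polynomial $P$ on $\Hh^3$ that is invariant under right multiplication by $\Hh^\times$ and of degree $2j$ with $1\le j\le5$. Since $X$ is a projective set, odd degrees vanish automatically. For a harmonic polynomial of degree $2j$ we form the Hermitian harmonic theta series $\Theta_{L,P}=\sum_{v\in L}P(v)\,q^{h(v)}$. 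Here $h(v)$ is the Hermitian norm, and $P$ is evaluated at the first real place.

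This object is a Hilbert cusp form for $SL_2(\cO_K)$ of weight $(6+2j,\,6)$. The weight is non-parallel, because $P$ lives at one place only while the second place contributes only its Gaussian. Our first step is to show that such cusp forms vanish in the range needed, namely $2j\le10$ together with the corresponding low-weight spaces at the other place. This is exactly the extremality input. The coefficients of $\Theta_{L,P}$ at norms $1$, $\tau$, $\tau^{-1}$ and similar vanish because $L$ has no vectors of norm $1$ (\cref{thm:scalar-intro}). A non-parallel Hilbert cusp form whose first coefficients vanish, in weights this small, is zero by a Sturm-type or dimension bound. Our first attempt at this bound will go through Gundlach's structure theorem, extended to the relevant modules of non-parallel weight. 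Failing that, we will restrict to the diagonal, which gives elliptic modular forms for $SL_2(\Z)$ of weight $12+2j$, and use Venkov's classical argument for the Leech lattice.

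The second step, the two-place separation, is the main obstacle. The diagonal restriction pairs the shells of norm $a$ and norm $a'$ having the same trace $\Tr(a)$. It therefore only shows that harmonic sums vanish over unions of shells with equal trace, such as norm $2+\tau$ together with its Galois conjugate $3-\tau$. To separate individual Hermitian norms we use test functions $P(v)\,Q(\sigma_2 v)$, where $\sigma_2 v$ denotes the conjugate embedding, and vary the radial weight at the second place. Concretely, we compare $\Theta_{L,P}$ evaluated along the diagonal with its derivatives in the second variable. Equivalently, we use the vanishing of the full Hilbert harmonic theta series as a function of two variables, which isolates each Fourier coefficient $a\in\cO_K^{+}$ separately. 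The step that needs real care is proving that the non-parallel cusp-form space of weight $(6+2j,6)$ vanishes outright for $j\le5$, rather than merely after diagonal restriction. If that fails for $j=5$, the fallback is to check that no possible nonzero form is compatible with the vanishing of its first coefficients.

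Finally, $\tau\cI=\cI\tau$ and $\tau$ is central and totally positive, so the map $v\mapsto v\tau$ sends $L$ into itself. It multiplies the Hermitian norm by $\tau^2$ and sends the norm-$2$ shell into the norm-$2\tau^2$ shell. Both shells have $37\,800$ vectors by \cref{thm:scalar-intro}, so the map is a bijection of shells and is the identity projectively. The point counts $315$, $1008$, $315$ are those of \cref{thm:scalar-intro}.
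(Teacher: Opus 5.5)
Your overall strategy is the right one: use Venkov-type vanishing, then separate the Hermitian norm types. But the step you lean on is false, and the step that actually does the work is not specified.

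\textbf{The ambient cusp-form spaces do not vanish.} Take the split lattice $\Ls=\cI^3$ and a zonal harmonic $P\in\cH_j$ about $[e_1]$. The coefficient of $\Theta_{\Ls,P}$ at $a=1$ is $120\bigl(1+2Q_j(0)\bigr)=120\bigl(1+\tfrac{12(-1)^j}{(j+2)(j+3)}\bigr)$, which is nonzero for every $j\ge2$. So $S_{(6+2j,6)}(\SL_2(\cO_K))\ne0$ for $j\ge2$, and no vanishing theorem for the ambient space is available. Gundlach's theorem concerns parallel weight and gives you no Sturm-type bound here. Vanishing of the coefficient at $a=1$ (and hence at $\tau^2$) is essentially one linear condition. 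It would kill $\Theta_{L,P}$ only if you knew the space were at most one-dimensional, which you do not establish.

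Your ``equivalently'' is also wrong. Vanishing of the full two-variable series is much stronger than anything diagonal restrictions yield. The theorem needs, and the paper proves, only vanishing of the three coefficients at trace index $2$.

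Your bookkeeping of those coefficients is off as well. With the correct normalization $\eK(\alpha^{-1}h(v,v)z)$, the trace index of $a=m+n\tau$ is $m$, so $2+\tau$ and $3-\tau$ lie at different levels. There are no shells of norm $\tau$ or $\tau^{-1}$, since these are not totally positive. The Leech minimal shell is exactly $L_2\sqcup L_{2+\tau}\sqcup L_{2\tau^2}$, because $L_{\tau^{-2}}$ and $L_{\tau^4}$ are empty by extremality and the central unit.

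\textbf{The separation must be done with elliptic forms on the trace lattice, and your sketch is missing three ingredients.}

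First, the test functions must be harmonic on all of $\R^{24}$. A radial weight at the second place alone is not, since $\Delta(P\varrho_2)=24P$, and $\partial_{z_2}\Theta_{L,P}$ restricted to the diagonal is not modular by itself. The correct first-order separator is $P(X)\bigl(\varrho_2-\tfrac{3}{r+3}\varrho_1\bigr)$, of degree $2r+2$. Equivalently, restrict $6\partial_{z_1}-(6+2r)\partial_{z_2}$ applied to $\Theta_{L,P}$ to the diagonal.

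Second, the Leech vanishings in degrees $2r$ and $2r+2$ give only two equations for the three unknowns $A_r,B_r,C_r$. The third is $C_r=\tau^{2r}A_r$, which comes from multiplication by $\tau$; you used $\tau$ only for the projective identification. With it, the system reduces to $\frac{1+\beta_r}{\sqrt5}(1-\tau^{2r})A_r=0$ for $r\le4$, where $\beta_r=3/(r+3)$.

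Third, for $r=5$ that separator has degree $12$. There $\dim S_{24}(\SL_2(\Z))=2$, and the Leech minimal shell need not annihilate all harmonics. You need instead the degree-$14$ vanishing ($S_{26}=\C\,\Delta E_{14}$) and a quadratic radial factor such as $\mathcal R(u,v)=v^2-\frac78uv+\frac{21}{136}u^2$, which makes $P(X)\mathcal R(\varrho_1,\varrho_2)$ harmonic. You must then check that the resulting coefficient of $A_5$ is nonzero.

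Your final paragraph on the cardinalities $315,1008,315$ and the $\tau$-identification is fine, since those counts rest on the primitive-line lemma behind the scalar theta theorem.
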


The design assertion is proved in \cref{thm:rank3-venkov};
\cref{lem:primitive-shell-lines} supplies the cardinalities and
the projective identification.

For the norm-$2$ shell, icosian integrality, extremality, and reduction
modulo~$2$ first restrict the projective inner products to the five
values in \eqref{eq:hoggar-angle-set-intro}.  An arithmetic primitivity lemma excludes repeated lines; the
one-point projective moments determine the valencies, and the mixed
moments determine all intersection numbers.  No point-stabilizer
suborbit calculation is used.

\begin{theorem}[intrinsic norm-$2$ shell geometry]\label{thm:design-intro}
Let~$L\in\cG_3$ be extremal, and let~$X_L$ be its projectivized norm-$2$
shell.  Then~$X_L$ is a set of~$315$ points.  The projective inner
products between distinct points and their multiplicities relative to
every point are
\[
\begin{array}{c|ccccc}
 t&0&\frac12&\frac14&\frac{3+\sqrt5}{8}&\frac{3-\sqrt5}{8}\\ \hline
 \text{multiplicity}&10&80&160&32&32.
\end{array}
\]
The projective~$5$-design identities force all intersection numbers of
the resulting symmetric five-class association scheme.  Fusing the two
golden-angle relations gives the distance scheme of the orthogonality
graph, with intersection array
\[
 \{10,\,8,\,8,\,2;1,\,1,\,4,\,5\}.
\]
Its triangles are the~$525$ orthogonal quaternionic frames, and with
these triangles as lines~$X_L$ is a regular near octagon with parameters~$(2,\,4;0,\,3)$.
\end{theorem}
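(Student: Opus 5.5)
We use \cref{thm:scalar-intro} and \cref{thm:venkov-intro}: the norm-$2$ shell of an extremal $L$ has $37\,800$ vectors, forming $315$ projective points, and it is a quaternionic projective $5$-design in $\Hh\PP^2$. The plan runs in four steps: angles, valencies, intersection numbers, and then the distance-regular graph and near octagon.

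\textbf{Step 1: the possible angles.} For $x,y$ in the shell, put $h=\langle x,y\rangle\in\cI$ and $t=|h|^2/4=\nrd(h)/4$. Extremality says there are no vectors of norm $1$. Hence for $x\ne\pm y u$ the vector $x-yu$, with $u$ a unit of $\cI$ chosen to best align $h$, has norm $\ge 2$. This bounds $\nrd(h)$ at both real places, using the Galois conjugate as well.

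Totally positive elements $\nrd(h)\in\cO_K$ satisfying $0\le\nrd(h)\le 4$ at both places, together with $\nrd(h)\neq4$ (by the primitivity lemma: no repeated lines), leave the candidates
\[
\nrd(h)\in\{0,1,2,\tau^2,\tau^{-2},\ldots\}.
\]
Reduction modulo $2$, using $\cI/2\cI\cong M_2(\F_4)$ and the $\F_4$-structure, excludes the stray values such as $\tau$, $3$ and $\tau^{-1}$. This leaves exactly $t\in\{0,\tfrac12,\tfrac14,\tfrac{3\pm\sqrt5}8\}$.

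\textbf{Step 2: the valencies.} Fix a point $p$ and let $n_t$ be the number of other points at angle $t$. The $5$-design property gives
\[
\sum_{q\in X_L} f(|\langle p,q\rangle|^2)=315\int f
\]
for every polynomial $f$ of degree $\le 5$ in $t$. The zonal moments $\int t^k$ on $\Hh\PP^2$ are known: with $d=4$ and $n=3$, one has $\int t^k=\binom{k+1}{k}\big/\binom{k+5}{k}$ up to the standard normalization. Taking $k=0,\dots,4$ gives five linear equations in the five unknowns $n_t$, a Vandermonde system. Solving it yields $10,80,160,32,32$. The Galois conjugation symmetry is a useful consistency check on the two golden-angle values.

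\textbf{Step 3: the intersection numbers.} For $p,q$ at angle $t$, count the points $r$ with prescribed angles to both. Apply the design property to the mixed functions $r\mapsto f(|\langle p,r\rangle|^2)\,g(|\langle q,r\rangle|^2)$ with $\deg f+\deg g\le5$. Their averages are computable two-point integrals, expressible through Jacobi polynomials $P^{(3,1)}$.

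This is the step I expect to be the main obstacle. There are $25$ unknowns per relation, and the degree-$\le5$ mixed moments may not suffice alone. I would supplement them with the row-sum constraints from Step 2, nonnegativity, and integrality. I would also use the orthogonality structure: a point orthogonal to $p$ lies in $p^\perp\cong\Hh\PP^1$, and there the $10$ orthogonal neighbours split into frames. If the moments still leave freedom, I would use the norm-$2$ frame count $525=315\cdot 10/6$ to pin down $p^0_{00}=1$ directly.

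\textbf{Step 4: the graph and the near octagon.} Fuse the two golden relations and verify the distance-regular intersection array from the $p^k_{ij}$. Triangles of the orthogonality graph are pairwise orthogonal triples, that is, frames. Their number is $315\cdot10\cdot1/6=525$, and the parameters $s=2$, $t+1=5$ follow. The near-octagon axiom is that each point has a unique nearest point on each line; it follows from the array via $c_i$ and $a_i$, namely $a_1=1$, $a_2=a_3=0$ and $c_2=1$, by standard arguments (Brouwer--Cohen--Neumaier). This gives the regular near octagon with parameters $(2,4;0,3)$.
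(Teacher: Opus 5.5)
Your Step~2 matches the paper's argument. The genuine gap is Step~3, which is where the theorem's claim that the design identities \emph{force} the intersection numbers has to be proved. You are right that the $21$ equations $\sum_{i,j}p_{ij}(x,y)\rho_i^a\rho_j^b=315\,\mu_{ab}(s)-s^a-s^b$ ($a+b\le5$) do not determine the $25$ unknowns. With $v_a=(\rho_i^a)_i$, the homogeneous system says only $Qv_0=Qv_1=0=v_0^TQ=v_1^TQ$, plus three conditions on $(v_2,v_2)$, $(v_2,v_3)$, $(v_3,v_2)$. This leaves a six-dimensional kernel over $K$.

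None of your supplements closes this as stated:
\begin{enumerate}[label=(\roman*)]
\item The row sums are already the $b=0$ equations.
\item The count $525=315\cdot10/6$ is circular. The number of triangles is $315\cdot10\cdot p/6$, where $p$ is the number of points orthogonal to both members of an orthogonal pair (your $p^0_{00}$, the paper's $p^{\,1}_{11}$). So knowing $525$ is equivalent to what you want, and nothing available before this theorem supplies it. In the paper, $525$ is an output of this theorem, consumed later by the mass argument.
\end{enumerate}

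The missing idea is how integrality actually bites. The $p_{ij}$ are rational while the equations have coefficients in $\Q(\sqrt5)$, so the Galois-conjugate system also holds; conjugation fixes $0,\frac12,\frac14$ and swaps $g_\pm$. Hence the difference of two solutions annihilates $v_1'$ as well. Since $\det[v_0,v_1,v_1',v_2,v_3]\ne0$, the kernel is cut down to $\Q\,uu^T$ with $u=(1,2,-1,-1,-1)^T$.

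Even then you are not done. Nonnegativity gives $\lambda=0$ for four relations, but only $\lambda\in\{0,1\}$ for the $\frac14$-relation. The case $\lambda=1$ must be excluded separately: it would give a $z$ orthogonal to both $x$ and $y$, contradicting $p^{\,1}_{31}=0$ for the orthogonal pair $(x,z)$. This uniqueness is also what makes $p_{ij}(x,y)$ depend only on $t(x,y)$. Your plan never addresses that pair-independence, which is the association-scheme property itself.

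There are also smaller errors elsewhere.
\begin{itemize}
\item In Step~1, $\tau$ and $\tau^{-1}$ are not totally positive, so they never occur as reduced norms. The values surviving Cauchy--Schwarz that must be excluded are $3$, $2+\tau$, and $3-\tau$.
\item The mechanism for that exclusion is not $x-yu$ with $u$ a unit. Write $4-a=c^2$ with $c\in\{1,\tau^{-1},\tau\}$. Lift $c^{-1}q$ modulo $2\cI$ to some $u\in\cI^1$, which is possible because reduced norm becomes determinant and $\cI^1\to\SL_2(4)$ is onto. Write $q-cu=2r$ and check that $z=y-xr\in L$ has norm $c^2$, a unit, contradicting extremality.
\item In Step~4 the array gives $a_1=a_2=1$, $a_3=4$, $a_4=5$, not $a_2=a_3=0$. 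With the correct values, $a_i=c_ia_1$, as a near polygon with three points per line requires.
\item You still need $t_2=c_2-1=0$ and $t_3=c_3-1=3$ to get the regular parameters.
\end{itemize}
The paper obtains the nearest-point property more directly from $t(x,p_1)+t(x,p_2)+t(x,p_3)=1$ for a frame. Its only admissible solutions give the distance patterns $(0,1,1)$, $(1,2,2)$, $(2,3,3)$, and $(3,4,4)$.
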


The proof is given by \cref{prop:extremal-shell-angles},
\cref{lem:norm2-multiplicity-free}, \cref{thm:angle-association-scheme},
\cref{prop:distance-angle}, and \cref{thm:near-octagon}.

This theorem supplies the~$525$ frames used in the arithmetic
classification.  Conversely, after the mass argument identifies the
extremal class with Tits's lattice, Cohen--Tits uniqueness identifies the
fused incidence geometry with the Hall--Janko near octagon.  A connected
bipartition argument then shows that its distance-$4$ relation has, up to
simultaneous exchange, a unique fission with the intersection parameters
forced by the projective moments.

\subsection*{From combinatorial uniqueness to geometric rigidity}

Uniqueness of the near octagon alone does not determine a realization in~$\Hh\PP^2$, because it fuses the two golden-angle relations.  The
remaining geometric step is supplied by the cubic structure of the
Jordan algebra~$\Herm_3(\Hh)$ of~$3\times3$ Hermitian quaternionic matrices.  The centered second moment reconstructs the Euclidean
metric on its trace-zero subspace~$\Herm_3(\Hh)^0$, while the centered third moment reconstructs
its invariant cubic form.  Polarization then recovers the Jordan product
and reduces the ambient orthogonal symmetry to~$P\Sp(3)$.

\begin{theorem}[quaternionic cubic rigidity]\label{thm:cubic-rigidity-intro}
Let~$Y$ and~$Y'$ be finite quaternionic projective~$3$-designs in~$\Hh\PP^2$,
and let
\[
 \phi:Y\longrightarrow Y'
\]
be a bijection preserving all projective inner products.  Then there is
a unique element
\[
 g\in P\Sp(3)=\Aut(\Herm_3(\Hh))
\]
such that $g(y)=\phi(y)$ for every~$y\in Y$.
\end{theorem}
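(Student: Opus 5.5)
We identify each point $y\in\Hh\PP^2$ with its rank-one idempotent $P_y=vv^*$, where $v$ is a unit representative, inside the Jordan algebra $\Herm_3(\Hh)$. The plan has four steps.

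\textbf{Setup.} The projective inner product is $|\langle v,w\rangle|^2=\tr(P_vP_w)$, where $\tr$ is the real trace form; we abbreviate $\langle A,B\rangle:=\tr(A\circ B)$. Since $\tr P_y=1$, the centered points
\[
 \hat y:=P_y-\tfrac13 I\in\Herm_3(\Hh)^0
\]
satisfy
\[
 \langle\hat y,\hat y'\rangle=\tr(P_yP_{y'})-\tfrac13 .
\]
So $\phi$ preserves all pairwise inner products of the vectors $\hat y$. The plan is to extend $y\mapsto\widehat{\phi(y)}$ to a linear map and then show that this map preserves the Jordan structure.

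\textbf{Step 1 (linear isometry).} The $3$-design property, in fact already the $2$-design property, gives
\[
 \frac1{|Y|}\sum_{y}\langle\hat y,A\rangle\,\hat y=c\,A
 \quad\text{for all } A\in\Herm_3(\Hh)^0,
\]
with $c>0$ depending only on the dimension. This follows because the averaged frame operator is $Sp(3)$-invariant on the irreducible module $\Herm_3(\Hh)^0$, where the average over $Y$ agrees with the $Sp(3)$-average by the design property. Hence $\{\hat y\}$ spans $\Herm_3(\Hh)^0$. Since $\phi$ preserves the Gram matrix, the assignment $\hat y\mapsto\widehat{\phi(y)}$ extends to a well-defined linear isometry $T$ of $\Herm_3(\Hh)^0$: a linear relation $\sum a_y\hat y=0$ holds exactly when $\sum a_y a_{y'}\langle\hat y,\hat y'\rangle=0$, and that quantity is Gram-determined. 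We extend $T$ to $\Herm_3(\Hh)$ by $T(I)=I$.

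\textbf{Step 2 (cubic form).} Consider the invariant cubic $C(A)=\tr(A^3)$ on $\Herm_3(\Hh)^0$. The $3$-design property lets us write $C$ as a design average,
\[
 \tr(A^3)=\alpha\,\frac1{|Y|}\sum_y\langle\hat y,A\rangle^3+\beta\,\langle A,A\rangle\,\frac1{|Y|}\sum_y\langle\hat y,A\rangle .
\]
This identity holds because the cubic design moment is an $Sp(3)$-invariant cubic polynomial in $A$, and on $\Herm_3(\Hh)^0$ the invariant cubics are spanned by $\tr(A^3)$. The second term vanishes because $\sum_y\hat y=0$ by the $1$-design property. We must check that $\alpha\neq0$, i.e.\ that the spherical average of $\langle\hat y,A\rangle^3$ over $\Hh\PP^2$ is not identically zero. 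This follows from a single computation, e.g.\ at $A=\hat e_1$, using the Jacobi-type distribution of $|v_1|^2$. Because $T$ carries each $\hat y$ to $\widehat{\phi(y)}$ and $Y'$ is also a $3$-design, the displayed formula applied to $Y'$ gives $C(TA)=C(A)$.

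\textbf{Step 3 (Jordan automorphism).} Polarizing $C$ and using the metric recovers the traceless Jordan product: $\langle A\circ B,D\rangle$ is a combination of the full polarization of $C$ and of $\langle A,B\rangle\tr D$. Hence $T$ preserves $\circ$ on $\Herm_3(\Hh)$, so $T\in\Aut(\Herm_3(\Hh))$. It is classical that this group is $F_4$-compact's subgroup $Sp(3)/\{\pm1\}=P\Sp(3)$ acting by $A\mapsto gAg^*$; this is the fact asserted in the statement. Existence of $g$ follows: $T(P_y)=P_{\phi(y)}$, so $g(y)=\phi(y)$. For uniqueness, two such elements agree on the vectors $\hat y$, which span, so they agree as linear maps, and $P\Sp(3)$ acts faithfully on $\Herm_3(\Hh)$.

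\textbf{Main obstacle.} I expect Step 2 to be the hardest. Two facts need justification: that the space of $Sp(3)$-invariant cubics on $\Herm_3(\Hh)^0$ is one-dimensional, and that the coefficient $\alpha$ is nonzero. For the first I would invoke the standard invariant theory of the $14$-dimensional module, namely the Chevalley restriction to the diagonal with the Weyl group $S_3$, whose only degree-$3$ invariant on the trace-zero diagonal is $e_3$. For the second I would compute explicitly over $\Hh\PP^2$, using the fact that $(|v_1|^2,|v_2|^2,|v_3|^2)$ is Dirichlet$(2,2,2)$-distributed.
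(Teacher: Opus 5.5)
Your proposal is correct and follows the paper's proof essentially step for step. The second design moment gives spanning of $\Herm_3(\Hh)^0$ and an isometric extension; the third moment shows the extension preserves the cubic $\Tr_J(a^3)$, using that invariant cubics on the trace-zero part are multiples of it and a nonzero constant computed at a centered projector. Polarization then recovers the traceless Jordan product, so fixing $I_3$ yields an element of $\Aut(\Herm_3(\Hh))=P\Sp(3)$, unique by spanning. The differences are cosmetic: your centering $P_y-\tfrac13 I$ is half the paper's $2p-\tfrac23 I_3$, your $\beta$-term is superfluous because the trace-zero module has no invariant linear forms, and the projective inner product should be written $\operatorname{Re}\tr(P_vP_w)=\tr(P_v\circ P_w)$.
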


The proof is given in \cref{thm:quaternionic-cubic-rigidity}.
The association-scheme and fission arguments supply the required
bijection preserving all projective inner products in the Hall--Janko case.

\begin{theorem}[geometric rigidity for the prescribed angles]\label{thm:geometric-rigidity-intro}
A subset of~$\Hh\PP^2$ whose off-diagonal projective inner products
belong to \eqref{eq:hoggar-angle-set-intro} has at most~$315$ points.
Equality holds if and only if it is~$P\Sp(3)$-equivalent to the
Hall--Janko configuration.
\end{theorem}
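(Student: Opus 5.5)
The bound comes first. For a finite $X\subset\Hh\PP^2$ whose off-diagonal projective inner products lie in \eqref{eq:hoggar-angle-set-intro}, apply Hoggar's linear-programming (annihilator) bound with the polynomial $F$ of \eqref{eq:hoggar-polynomial-intro}. Expand $F$ in the Jacobi polynomials $Q_k$ attached to $\Hh\PP^2$, for $k\le 5$. One checks that all coefficients are nonnegative, that $F(t)=0$ on the five angles, and that $F(1)/f_0 = 315$, where $f_0$ is the constant coefficient. The standard double-counting argument then gives
\[
\sum_{x,y\in X}F(|\langle x,y\rangle|^2)=|X|F(1)\ \ge\ f_0|X|^2,
\]
so $|X|\le 315$. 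In the equality case every term $f_k\sum_{x,y}Q_k(\cdot)$ with $f_k>0$ and $1\le k\le 5$ must vanish. By \cref{lem:hoggar-bound-equality}, this makes $X$ a projective $5$-design.

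For existence, the Hall--Janko configuration is the projectivized norm-$2$ shell $X_L$ of the extremal class. By \cref{thm:design-intro} it has $315$ points and realizes exactly these angles, so equality is attained. By $P\Sp(3)$-invariance of projective inner products, every $P\Sp(3)$-translate also attains equality.

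For the converse, let $|X|=315$. Since $X$ is a $5$-design with the prescribed five angles, the argument of \cref{thm:design-intro} applies verbatim. That argument used only the design identities and the angle set, never arithmetic. Thus one-point moments give the valencies $10,80,160,32,32$, and mixed moments give all intersection numbers. Fusing the golden relations yields a distance-regular graph with intersection array $\{10,8,8,2;1,1,4,5\}$. Its triangles then form a regular near octagon with parameters $(2,4;0,3)$, and one must verify that orthogonal triples extend to frames, which follows from rank $3$. Cohen--Tits uniqueness gives an isomorphism $\psi$ of this near octagon with the one on $X_L$. The map $\psi$ preserves $0$, $\frac12$, $\frac14$ (distances $1,2,3$) and the fused golden relation (distance $4$).

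The main obstacle is refining $\psi$ so that it preserves the two golden angles separately. Here I would invoke the fission lemma: up to simultaneous exchange, the distance-$4$ relation has a unique fission with the intersection parameters forced by the projective moments. The argument is that each fission is determined by a connected bipartition structure on the distance-$4$ graph. Composing $\psi$ with an automorphism of the near octagon realizing the exchange, when needed, produces a bijection $\phi:X\to X_L$ preserving all five projective inner products. If no automorphism realizes the exchange, one instead applies the Galois conjugation $\sqrt5\mapsto-\sqrt5$ to the coordinates of $X_L$. That would only preserve equivalence under a semilinear map, so I expect the exchange to be realized by an element of $\Aut(\HJ)\cong 2.J_2$ or by its outer automorphism; this needs checking. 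Finally, both $X$ and $X_L$ are $5$-designs, hence $3$-designs, so \cref{thm:cubic-rigidity-intro} yields $g\in P\Sp(3)$ with $g|_X=\phi$, and therefore $X=g^{-1}(X_L)$.
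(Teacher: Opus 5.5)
\textbf{Gap: the golden-exchange case.} Your route matches the paper's up to the last combinatorial step: Hoggar's bound, with equality forcing the $5$-design property; the forced scheme and near octagon; Cohen--Tits; the fission uniqueness of \cref{prop:golden-fission-uniqueness}; and finally \cref{thm:quaternionic-cubic-rigidity}. The gap is the case in which the Cohen--Tits isomorphism $\psi$ matches the golden relations of your arbitrary $315$-point set $X$ with those of $X_L$ in the exchanged order. There you need a permutation of $X_L$ that fixes the relations $0,\frac12,\frac14$ and interchanges $g_+$ and $g_-$, and you leave its existence as ``needs checking.''

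Your first candidate is impossible. An element of $\Aut_{\cI}(\HJ)\cong2.J_2$ is an $\cI$-linear isometry, so at the real place it acts through $P\Sp(3)$ and preserves every projective inner product. Your second candidate, the outer automorphism of $J_2$, needs an argument you do not give. Suppose you grant $\Aut(\mathcal N_{X_L})\cong J_2{:}2$ from the literature. Fission uniqueness and cubic rigidity then show only that any element outside $\operatorname{Stab}_{P\Sp(3)}(X_L)$ swaps the golden relations. You would still have to prove that this stabilizer is a proper subgroup, and the paper deduces that (\cref{cor:projective-stabilizer}) only from the swap itself.

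\textbf{The missing idea.} It is the Galois twist you set aside, implemented so that it preserves the lattice (\cref{prop:golden-galois-symmetry}). Naive coordinatewise conjugation $\sqrt5\mapsto-\sqrt5$ does not even preserve $\cI$. The paper instead uses the semilinear algebra automorphism $\gamma$ of \eqref{eq:icosian-galois-map}, which does preserve $\cI$. It then left-multiplies every coordinate by one unit $u\in\cI^1$ carrying $(\gamma(\mathfrak P),\gamma(\mathfrak P_0))$ back to $(\mathfrak P,\mathfrak P_0)$; this is possible because $\SL_2(4)$ is transitive on ordered pairs of distinct lines in $\F_4^2$.

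The resulting map $\eta$ is a bijection of $\HJ$ onto itself with $h_{\mathrm T}(\eta x,\eta y)=\gamma(h_{\mathrm T}(x,y))$. It therefore permutes the norm-$2$ shell and its $\cI^1$-orbits, and so induces a permutation $\iota$ of the same finite set $X_L$. Because the reduced norms of inner products are Galois-conjugated, $\iota$ fixes the rational angles and exchanges $g_\pm$.

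Your objection that this gives only a semilinear equivalence is misplaced. The map $\iota$ is used merely as a bijection of point sets. The composite $\iota\circ\psi\colon X\to X_L$ preserves all five projective inner products, and the element of $P\Sp(3)$ comes afterwards, by applying cubic rigidity to this composite rather than by linearizing the Galois map. The same remedy applies if you phrase the swap as comparing the images of $X_L$ at $\sigma_2$ and $\sigma_1$. (A minor point: the statement allows infinite subsets, which you should exclude by applying the finite bound to any $316$ of their points.)
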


The proof is given in \cref{thm:HJ-prescribed-angle-uniqueness}.

The prescribed-angle hypothesis remains essential to what is proved
here; the design property is a consequence of equality, not an
additional hypothesis.  We do not claim conversely that cardinality~$315$ and the projective~$5$-design property alone force the five values
in \eqref{eq:hoggar-angle-set-intro}.

\subsection*{Inputs and contributions}

The external arithmetic inputs are the Hashimoto--Shimura mass formula,
the genus-one Siegel--Weil identity, and Gundlach's description of
level-one Hilbert modular forms over~$\Q(\sqrt5)$.  The geometric inputs
are the uniqueness and harmonic theta properties of the Leech lattice
and the Cohen--Tits uniqueness theorem for the regular near octagon.
Tits's explicit lattice supplies the concrete nonsplit class and, only
at the final identification step, the reflection group~$2.J_2$.
The automorphic route is deliberate: the genus average and harmonic
theta identities place the construction in the same framework as the
classical Niemeier--Leech and Venkov arguments.  Siegel--Weil is used in
\cref{prop:coefficient-saturation} and \eqref{eq:extremal-theta}; in this
small genus those conclusions also follow from rank-$2$ class number
one and Gundlach's theorem, as noted in \cref{rem:rank-two-shortcut}.

The arguments supplied here are the coefficient-saturation proof of
extremality, the two-place separation of the three Hermitian pieces of
the Leech minimal shell, the intrinsic norm descent for the five
projective inner products, the recovery of the complete angle scheme
from projective moments, the uniqueness of its golden fission, the
mass-assisted frame-code classification, and the cubic Jordan rigidity
theorem.  In particular, neither the~$525$-frame count nor the order~$1\,209\,600$ is imported from a prior finite-group identification.  The
name~$2.J_2$ is used only after the lattice classification and the
automorphism-group order have been obtained.

\Cref{sec:dictionary} fixes the arithmetic and adelic normalization.
\Cref{sec:scalar-theta} evaluates the mass and scalar theta series and
proves extremality.  \Cref{sec:rank3-venkov} passes through the Leech
trace lattice to the projective~$5$-design identities, and
\cref{sec:norm2-geometry} reconstructs the norm-$2$ shell geometry.
\Cref{sec:tits-lattice} uses the intrinsic frames and their Morita image
to saturate the residual mass, while \cref{sec:geometric-rigidity}
proves the combinatorial and geometric rigidity statements.

\section{The compact group of type \texorpdfstring{$C_3$}{C3} and its genus}\label{sec:dictionary}

\subsection{The field and the icosian order}

We label the real embeddings~$\sigma_1$ and~$\sigma_2$ by
$\sigma_1(\sqrt5)=\sqrt5$ and $\sigma_2(\sqrt5)=-\sqrt5$.
The nontrivial Galois automorphism is denoted by~$a\mapsto a'$.
Let
\[
 D=\left(\frac{-1,\,-1}{K}\right)
 =K\langle i,\,j\rangle/(i^2=j^2=-1,\,ij=-ji),
 \qquad k=ij.
\]
For $q=a+bi+cj+dk$ with~$a$, $b$, $c$, and~$d$ in~$K$, quaternionic
conjugation, reduced norm, and reduced trace mean
\[
 \overline q=a-bi-cj-dk,
 \qquad \nrd(q)=q\overline q,
 \qquad \trd(q)=q+\overline q.
\]
Both reduced maps take values in~$K$; in particular, $\trd(a)=2a$
for~$a\in K$.  For matrices and column vectors, $A^*=\overline A^{\,T}$
denotes conjugate transpose, while~$A^T$ denotes ordinary transpose.
The algebra~$D$ is the scalar extension to~$K$ of the Hamilton quaternion algebra over~$\Q$, which is ramified only at~$2$ and at infinity.  The prime~$2$ is inert in~$K$, so the local degree at the unique dyadic place is~$2$ and the invariant~$1/2$ becomes~$0$ after base change.  The invariants at all other finite places were already zero.  Thus~$D$ is ramified at the two real places and split at every finite place.  We use the standard icosian maximal order
\begin{equation}\label{eq:standard-icosian-order}
 \cI=\left\langle
  1,\,i,\,\frac{1+i+j+k}{2},\,
  \frac{1-\tau+\tau i+k}{2}
 \right\rangle_{\cO_K}.
\end{equation}
For $q=a+bi+cj+dk$, define
\begin{equation}\label{eq:icosian-galois-map}
 \widetilde q=a'+b'i+d'j+c'k,
 \qquad
 \gamma(q)=\overline{\widetilde q}.
\end{equation}
A direct check on~$i$ and~$j$ shows that~$q\mapsto\widetilde q$ is a semilinear anti-involution.  Write
\[
 e=\frac{1+i+j+k}{2},
 \qquad
 f=\frac{1-\tau+\tau i+k}{2}.
\]
Then
\[
 \widetilde 1=1,
 \quad \widetilde i=i,
 \quad \widetilde e=e,
 \quad \widetilde f=e-f,
\]
so $\widetilde{\cI}=\cI$.  Moreover,
\[
 \overline e=1-e,
 \qquad
 \overline f=(1-\tau)-f,
\]
so canonical quaternionic conjugation also preserves~$\cI$.  Thus~$\gamma$ is a semilinear algebra automorphism of~$D$ preserving~$\cI$, with
\begin{equation}\label{eq:icosian-galois-properties}
 \gamma(qr)=\gamma(q)\gamma(r),\qquad
 \gamma(\overline q)=\overline{\gamma(q)},\qquad
 \nrd(\gamma(q))=\nrd(q)'.
\end{equation}
The norm-one group of~$\cI$ is
\begin{equation}\label{eq:icosian-units}
 \cI^1=\{u\in\cI:\nrd(u)=1\}
 \cong\SL_2(5),
 \qquad
 |\cI^1|=120.
\end{equation}
At the prime above~$2$, reduction induces
\begin{equation}\label{eq:reduction-mod2}
 \cI/2\cI\cong M_2(\F_4),
 \qquad
 \cI^1/\{\pm1\}\xrightarrow{\sim}\SL_2(4),
\end{equation}
where reduced norm becomes determinant; equivalently, the reduction map~$\cI^1\to\SL_2(4)$ is surjective with kernel~$\{\pm1\}$.

All quaternionic modules are right modules, and Hermitian forms are
conjugate-linear in the first variable and linear in the second.
On~$D^3$, with standard coordinate vectors~$e_1$, $e_2$, and~$e_3$, use
the positive definite Hermitian form
\begin{equation}\label{eq:standard-hermitian}
 h(x,\,y)=\sum_{j=1}^3\overline{x_j}y_j.
\end{equation}
The Hermitian norm of~$x$ means~$h(x,\,x)$.  For a right~$\cI$-lattice
$L\subset D^3$, set
\[
 L^\#=\{x\in D^3:h(x,\,L)\subseteq\cI\}.
\]
We call~$L$ unimodular if $L=L^\#$.

\subsection{The algebraic group and the class set}

Let~$\mathbf G$ be the algebraic~$K$-group of~$D$-linear isometries of
$(D^3,\,h)$.  It is an inner~$K$-form of the simply connected group of type~$C_3$.  At the two real places and at every finite place,
\[
 \mathbf G(K_\sigma)\cong\Sp(3),
 \qquad
 \mathbf G(K_{\mathfrak p})\cong\Sp_6(K_{\mathfrak p}).
\]
Thus~$\mathbf G$ is compact at infinity and unramified at every finite prime.  Although~$\mathbf G$ is simply connected, strong approximation with respect to the infinite places does not collapse the class set below, because~$\mathbf G(K\otimes_\Q\R)$ is compact.

Let~$\cG_3$ be the genus of unimodular rank-$3$ $\cI$-lattices.  Ad\`elically,
\begin{equation}\label{eq:adelic-class-set}
 \cG_3\cong
 \mathbf G(K)\backslash\mathbf G(\A_{K,\,f})/
 \prod_{\mathfrak p}\mathbf G(\cO_{K,\,\mathfrak p}).
\end{equation}
Here~$\A_{K,\,f}$ is the ring of finite adeles and
$\mathbf G(\cO_{K,\,\mathfrak p})$ is the stabilizer of the local
lattice~$\cI_{\mathfrak p}^3$.
Its mass is
\begin{equation}\label{eq:mass-def}
 \Mass(\cG_3)
 =\sum_{[L]\in\cG_3}\frac1{|\Aut_{\cI}(L)|}.
\end{equation}

\begin{remark}[The quaternionic genus and its stabilizers]\label{rem:integral-categories}
The class set~$\cG_3$ is a genus of rank-$3$ Hermitian lattices over~$\cI$, and the groups occurring in \eqref{eq:mass-def} are their~$\cI$-linear isometry groups.  Forgetting the quaternionic action places such a lattice in the rank-$12$ orthogonal setting over~$\cO_K$.  The even unimodular determinant-one orthogonal genus classified by Costello--Hsia has fifteen classes \cite{CostelloHsia}; its class number is therefore a statement in a different integral category.

Likewise, $\Aut_{\cI}(L)$ need not be the full~$\cO_K$-linear orthogonal group of the underlying quadratic lattice.  Since~$L$ is a right~$\cI$-module, right multiplication by~$u\in\cI^1$ preserves its~$\cO_K$-valued quadratic norm but is generally not~$\cI$-linear.  Such enlargements occur naturally in Nebe's golden-lattice viewpoint \cite{NebeGolden}; they do not enter Hashimoto's type-$C_3$ mass.
\end{remark}

\subsection{Hilbert theta normalization}

Let
\[
 \alpha=\frac{5+\sqrt5}{2}=\tau\sqrt5.
\]
The different ideal is~$\dK=(\sqrt5)$, so
$\alpha^{-1}\cO_K=\dK^{-1}$.  For $z=(z_1,\,z_2)\in\mathfrak H^2$,
where~$\mathfrak H$ is the upper half-plane, and~$\nu\in K$, write
\[
 \eK(\nu z)
 =\exp\!\left(2\pi i
 \bigl(\sigma_1(\nu)z_1+\sigma_2(\nu)z_2\bigr)\right).
\]
Let~$\cO_K^+$ consist of the elements of~$\cO_K$ positive at both
real embeddings.  For~$a\in\cO_K^+$, write
\[
 L_a=\{x\in L:h(x,\,x)=a\},
 \qquad r_L(a)=|L_a|.
\]
The scalar theta series of~$L$ is
\begin{equation}\label{eq:scalar-theta}
 \Theta_L(z)
 =\sum_{x\in L}\eK\!\left(\alpha^{-1}h(x,\,x)z\right)
 =1+\sum_{a\in\cO_K^{+}}r_L(a)\,
 \eK(\alpha^{-1}az).
\end{equation}
It is a Hilbert modular form of parallel weight~$6$ for the full level-one Hilbert modular group.  All Fourier coefficients~$r_L(a)$ are nonnegative integers.

Multiplication by the central unit~$\tau$ is a bijection~$L_a\to L_{\tau^2a}$, and therefore
\begin{equation}\label{eq:central-unit-shell-symmetry}
 r_L(a)=r_L(\tau^2a)
 \qquad(a\in\cO_K^+).
\end{equation}
We shall call~$L$ \emph{extremal} if
\begin{equation}\label{eq:extremal-def}
 r_L(1)=0.
\end{equation}
Equivalently, $r_L(\tau^2)=0$.  To see why these are the first two Fourier indices, write $a=m+n\tau\in\cO_K$.  Since
\[
 \Tr_{K/\Q}(\alpha^{-1}a)=m,
\]
the smallest positive diagonal trace index is $m=1$; total positivity then forces $n=0$ or~$1$, giving $a=1$ or $a=1+\tau=\tau^2$.  Thus the single extremality condition removes both norm types of smallest positive trace order in this normalization.  For ordinary modular forms, write~$E_k$ for the normalized Eisenstein
series and~$\Delta$ for the discriminant cusp form.
The term \emph{extremal} is literal: the trace form introduced in \cref{lem:extremal-trace-leech} is even unimodular of rank~$24$ for every~$L\in\cG_3$.  The standard modular-form bound gives a nonzero vector of norm at most~$4$: if the minimum were at least~$6$, then the weight-$12$ theta series would have vanishing coefficients at both~$q$ and~$q^2$, whereas the unique form with constant term~$1$ and vanishing~$q$-coefficient is~$E_4^3-720\Delta$, whose~$q^2$-coefficient is~$196\,560$.  Therefore the smallest positive diagonal trace index is at most~$2$, and condition \eqref{eq:extremal-def} attains the largest possible trace minimum; compare \cite{ConwaySloane,Dursthoff}.

\section{Mass, scalar Hilbert theta series, and extremality}\label{sec:scalar-theta}

\subsection{Hashimoto's mass}

In the Hashimoto--Shimura normalization \cite{Hashimoto,Shimura}, for a totally real field of degree~$n$ and the simply connected inner form of type~$C_m$ attached to a totally definite quaternion algebra split at every finite place, the mass~$\Mass_m$ of the rank-$m$ unimodular genus is
\cite[Proposition~9.3 and equations~(9.1)--(9.2)]{GanHankeYu}
\begin{equation}\label{eq:hashimoto-general}
 \Mass_m
 =2^{-mn}\prod_{j=1}^m|\zeta_K(1-2j)|.
\end{equation}
Here $n=[K:\Q]=2$, so
\begin{equation}\label{eq:hashimoto}
 \Mass_m
 =2^{-2m}\prod_{j=1}^m|\zeta_K(1-2j)|.
\end{equation}
The finite local correction factors are all~$1$: $D$ is split at every finite place and a unimodular lattice has hyperspecial stabilizer there.  The global normalization uses the Tamagawa number~$1$ of the simply connected group~$\mathbf G$.  For $K=\Q(\sqrt5)$, its Dedekind zeta function factors as
$\zeta_K(s)=\zeta(s)L(s,\,\chi_5)$, where~$\chi_5$ is the quadratic
Dirichlet character modulo~$5$.  The generalized Bernoulli-number
formula for~$L(1-n,\,\chi_5)$ gives
\begin{equation}\label{eq:zeta-values}
 \zeta_K(-1)=\frac1{30},
 \qquad
 \zeta_K(-3)=\frac1{60},
 \qquad
 \zeta_K(-5)=\frac{67}{630}.
\end{equation}
As low-rank consistency checks for this normalization,
\[
 \Mass_1=\frac14\,\zeta_K(-1)=\frac1{120}
 =\frac1{|\cI^1|}
\]
and
\[
 \Mass_2=\frac1{16}\,\zeta_K(-1)\zeta_K(-3)
 =\frac1{28\,800}
 =\frac1{|(\cI^1)^2\rtimes S_2|}.
\]
Thus the split lattices already exhaust the masses in ranks~$1$ and~$2$; in particular those two genera have one class.  The rank-$1$ quantity here is the Hermitian-lattice mass~$1/|\cI^1|$; it is not the Eichler mass of the quaternion order, which quotients by central units and equals~$1/60$ in this case.

\begin{remark}[a rank-two shortcut]\label{rem:rank-two-shortcut}
A norm-one vector~$e\in L$ splits off an orthogonal summand $L=e\cI\perp(L\cap e^\perp)$: the projection coefficient~$h(e,\,x)$
lies in~$\cI$, and the unimodular rank-$2$ complement is split by the
preceding mass calculation.  Thus~$L\cong\cI^3$.  We nevertheless retain
the genus-average proof of extremality below, to preserve the
mass-formula and Venkov framework.
\end{remark}

For rank~$3$, consequently
\begin{equation}\label{eq:mass-value}
 \boxed{
 \Mass(\cG_3)
 =\frac1{64}\cdot\frac1{30}\cdot\frac1{60}\cdot\frac{67}{630}
 =\frac{67}{72\,576\,000}.}
\end{equation}

\subsection{The split class}

If $x=(x_1,\,x_2,\,x_3)$ in the split lattice~$\Ls=\cI^3$ has
norm~$1$, then the three totally nonnegative algebraic
integers~$\nrd(x_j)$ sum to~$1$.  Both real embeddings of any
nonzero summand lie in~$(0,1]$, and their product is a positive
integer; hence both equal~$1$.  Thus this summand is~$1$ and the
other two vanish.  The~$360$ norm-one vectors therefore lie exactly
on the three coordinate lines.  These lines are intrinsic, every
isometry is monomial, and
\begin{equation}\label{eq:split-aut}
 \Aut_{\cI}(\Ls)
 \cong(\cI^1)^3\rtimes S_3
 \cong\SL_2(5)^3\rtimes S_3.
\end{equation}
Hence
\begin{equation}\label{eq:split-aut-order}
 |\Aut_{\cI}(\Ls)|
 =120^3\cdot6
 =10\,368\,000,
\end{equation}
and its mass contribution is
\begin{equation}\label{eq:split-mass-contribution}
 \frac1{|\Aut_{\cI}(\Ls)|}
 =\frac1{10\,368\,000}
 =\frac7{72\,576\,000}.
\end{equation}
The two smallest coefficients are
\begin{equation}\label{eq:split-small}
 r_{\Ls}(1)=360,
 \qquad
 r_{\Ls}(\tau^2)=360.
\end{equation}
Indeed, each of the three coordinate lines contributes~$120$ norm-one icosians, and multiplication by the central unit~$\tau$ gives the second shell.

\subsection{The Eisenstein average and coefficient saturation}

For even~$k\ge2$, normalize the Hilbert Eisenstein series by
\begin{equation}\label{eq:hilbert-eisenstein}
 E_{k,\,K}(z)
 =1+\frac4{\zeta_K(1-k)}
 \sum_{a\in\cO_K^+}
 \sigma_{k-1}((a))\,
 \eK(\alpha^{-1}az),
\end{equation}
where
\[
 \sigma_j(\mathfrak n)=\sum_{\mathfrak a\mid\mathfrak n}N(\mathfrak a)^j,
 \qquad N(\mathfrak a)=|\cO_K/\mathfrak a|.
\]
The sum is over integral ideal divisors.
We use Weil's Siegel--Weil formula \cite[Chapter~VI, \S52,
Theorem~5]{WeilSiegel}, in the quaternionic-Hermitian case~$(\mathrm I_1)$ of his \S27.  His parameters are $m=3$, $n=1$,
and $\varepsilon=1/4$, so the convergence condition~$m>2n+4\varepsilon-2$ is satisfied.  Take the characteristic functions
of the self-dual local lattices at the finite places and the Gaussian
for~$\alpha^{-1}h$ at the real places.  Decomposition of the normalized
theta integral over the class set \eqref{eq:adelic-class-set} gives
the weights $\Mass(\cG_3)^{-1}|\Aut_{\cI}(L)|^{-1}$.  The unramified
Eisenstein series has parallel weight~$6$ and constant term~$1$, in the
normalization \eqref{eq:hilbert-eisenstein}.  Thus
\begin{equation}\label{eq:siegel-weil-average}
 \frac1{\Mass(\cG_3)}
 \sum_{[L]\in\cG_3}
 \frac{\Theta_L}{|\Aut_{\cI}(L)|}
 =E_{6,\,K}.
\end{equation}
At a unit ideal the Fourier coefficient of~$E_{6,\,K}$ is
\begin{equation}\label{eq:e6-first-coeff}
 \frac4{\zeta_K(-5)}=\frac{2520}{67}.
\end{equation}

\begin{proposition}[coefficient saturation]\label{prop:coefficient-saturation}
Every class in~$\cG_3$ other than~$[\Ls]$ is extremal.
\end{proposition}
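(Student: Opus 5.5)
We compare the Fourier coefficient at the unit ideal on both sides of the Siegel--Weil identity \eqref{eq:siegel-weil-average}. Taking the coefficient of $\eK(\alpha^{-1}z)$, that is, index $a=1$, gives
\[
 \sum_{[L]\in\cG_3}\frac{r_L(1)}{|\Aut_{\cI}(L)|}
 =\Mass(\cG_3)\cdot\frac{2520}{67}
 =\frac{67}{72\,576\,000}\cdot\frac{2520}{67}
 =\frac{2520}{72\,576\,000},
\]
using \eqref{eq:mass-value} and \eqref{eq:e6-first-coeff}. Here we need $\sigma_5((1))=1$, so that the coefficient is exactly $4/\zeta_K(-5)$.

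Next we isolate the split class. By \eqref{eq:split-small} and \eqref{eq:split-aut-order}, its contribution is
\[
 \frac{r_{\Ls}(1)}{|\Aut_{\cI}(\Ls)|}
 =\frac{360}{10\,368\,000}
 =\frac{360\cdot 7}{72\,576\,000}
 =\frac{2520}{72\,576\,000}.
\]
This equals the full left-hand side, so
\[
 \sum_{[L]\ne[\Ls]}\frac{r_L(1)}{|\Aut_{\cI}(L)|}=0.
\]
Each $r_L(1)$ is a nonnegative integer and each weight $1/|\Aut_{\cI}(L)|$ is strictly positive. The class set is finite and every automorphism group is finite, because $\mathbf G$ is compact at infinity. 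Hence every summand vanishes, and $r_L(1)=0$ for every nonsplit class. By \eqref{eq:extremal-def} this means every such class is extremal, and \eqref{eq:central-unit-shell-symmetry} then also gives $r_L(\tau^2)=0$.

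The hard part is not this arithmetic but the normalization it depends on. The identity \eqref{eq:siegel-weil-average} must carry exactly the weights $\Mass(\cG_3)^{-1}|\Aut_{\cI}(L)|^{-1}$, and its $a=1$ Fourier coefficient must be exactly $4/\zeta_K(-5)$ with $\Mass(\cG_3)$ as in \eqref{eq:mass-value}. Two things could break this: a stray factor of $2$ from the centre $\{\pm1\}$, or a mismatch between Hashimoto's mass and Weil's Tamagawa measure. I would cross-check the normalization by running the same coefficient comparison in ranks $1$ and $2$. There the genus has one class, and the identity must return $r(1)=120$ and $r(1)=240$, namely $120\cdot 1/\bigl(120\cdot\zeta_K(-1)/4\cdot\ldots\bigr)$-type consistency against $E_{2,K}$ and $E_{4,K}$. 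If the rank-$2$ check with $E_{4,K}$, whose first coefficient is $4/\zeta_K(-3)=240$, reproduces $r_{\cI^2}(1)=240$, the normalization is confirmed. Independently, \cref{rem:rank-two-shortcut} gives the same conclusion without Siegel--Weil: a norm-one vector splits off $e\cI$, forcing $L\cong\Ls$.
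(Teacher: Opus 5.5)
Your argument is correct and is the paper's proof. It compares the $a=1$ coefficients in \eqref{eq:siegel-weil-average}, observes that the split class alone contributes $360/10\,368\,000=2520/72\,576\,000=\Mass(\cG_3)\cdot 2520/67$, and concludes by nonnegativity of the remaining summands together with \eqref{eq:central-unit-shell-symmetry}.

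Your closing normalization paragraph is not needed and can be dropped: the displayed rank-one expression is garbled, rank~$1$ lies outside Weil's convergence range $m>2n+4\varepsilon-2$, and the paper simply takes the weights from Weil's theorem and the mass from Hashimoto, with its low-rank checks $\Mass_1=1/|\cI^1|$ and $\Mass_2=1/|(\cI^1)^2\rtimes S_2|$ addressing the central-$\{\pm1\}$ factor you were worried about.
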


\begin{proof}
The unnormalized genus-average coefficient at $a=1$ is
\[
 \Mass(\cG_3)\frac{2520}{67}
 =\frac{2520}{72\,576\,000}.
\]
The split class alone contributes
\[
 \frac{r_{\Ls}(1)}{|\Aut_{\cI}(\Ls)|}
 =\frac{360}{10\,368\,000}
 =\frac{2520}{72\,576\,000}.
\]
Thus it already accounts for the complete Eisenstein coefficient.  Since every other summand is nonnegative, all other coefficients at $a=1$ vanish.  They are extremal by \eqref{eq:central-unit-shell-symmetry}.
\end{proof}

This proves extremality without first passing to the rank-$24$ trace lattice.

\subsection{The split theta series and a quinary code model}\label{sec:split-quinary-code}

This subsection is not needed for the classification argument; it records the Hirzebruch code--lattice interpretation of the split theta series.  The theta series of the split lattice has the following direct code-theoretic description.  We use the construction explained by Hirzebruch in his Bonn lecture course \emph{Kodierungstheorie und Beziehungen zur Geometrie} in the winter semester 1986/87 \cite{HirzebruchCoding}; cf. also \cite[Chap.~5]{Ebeling}.

Let
\[
 E=\Q(\zeta_5),
 \qquad
 \mathfrak p=(1-\zeta_5)\subset\cO_E,
\]
where~$\zeta_5$ is a primitive fifth root of unity.  The maximal real
subfield is~$K$, and $\cO_E/\mathfrak p\cong\F_5$.  For a code $C\subset\F_5^n$ put
\[
 \Gamma_C
 =\{x\in\cO_E^n:x\bmod\mathfrak p\in C\}.
\]
For a self-orthogonal code, the scaled Hermitian norm
\[
 q_C(x)=\frac1{5}\sum_{j=1}^n x_j\overline{x_j}
\]
has values in~$\mathfrak d_K^{-1}$; here the bar means complex
conjugation on~$E$, fixing~$K$.  The associated Hilbert theta series is obtained from the symmetrized Lee weight enumerator by substituting the three coset theta functions corresponding to
\[
 0,\qquad \{\pm1\},\qquad \{\pm2\}\subset\F_5.
\]

Consider the self-dual length-$2$ code
\begin{equation}\label{eq:quinary-length-two-code}
 C_2=\langle(1,\,2)\rangle_{\F_5}\subset\F_5^2.
\end{equation}
Its symmetrized Lee weight enumerator is
\begin{equation}\label{eq:quinary-length-two-swe}
 \operatorname{swe}_{C_2}(X_0,\,X_1,\,X_2)
 =X_0^2+4X_1X_2.
\end{equation}
The zero code gives the trace lattice~$A_4^2$.  The code~$C_2$ supplies an index-$5$ glue, so the trace lattice of~$\Gamma_{C_2}$ is even unimodular of rank~$8$, hence is~$E_8$.  If~$\vartheta_0$, $\vartheta_1$, and~$\vartheta_2$ denote the three coset theta functions, then
\begin{equation}\label{eq:quinary-e2}
 \Theta_{\Gamma_{C_2}}
 =\vartheta_0^2+4\vartheta_1\vartheta_2.
\end{equation}
Gundlach's structure theorem gives $\dim M_2(\SL_2(\cO_K))=1$.  Since \eqref{eq:quinary-e2} has parallel weight~$2$ and constant term~$1$, it follows that
\begin{equation}\label{eq:rank1-theta}
 \Theta_{\Gamma_{C_2}}=E_{2,\,K}=\Theta_{\cI}.
\end{equation}
Thus~$C_2$ is the cyclotomic code model for one split icosian coordinate at the level of the~$K$-valued norm and its Hilbert theta series; the full right-$\cI$ action is additional structure not seen by the code alone.

Taking three orthogonal copies gives
\[
 C_{\mathrm{spl}}=C_2^{\oplus3}\subset\F_5^6,
 \qquad
 \operatorname{swe}_{C_{\mathrm{spl}}}
 =(X_0^2+4X_1X_2)^3,
\]
and therefore
\begin{equation}\label{eq:split-theta}
 \boxed{\Theta_{\Ls}=E_{2,\,K}^3.}
\end{equation}
Equivalently, the~$A_4^6$ kernel is glued by~$C_2^{\oplus3}$ to the trace lattice~$E_8^3$ of the split icosian lattice.

\subsection{The common nonsplit scalar theta series}

Gundlach determined the graded ring of level-one Hilbert modular forms for~$\Q(\sqrt5)$; Resnikoff and Hirzebruch gave further descriptions, the latter through the Clebsch--Klein cubic surface, and M\"uller and Mayer provide convenient later formulations \cite{Gundlach,Resnikoff,Hirzebruch,Mueller,Mayer}.  In particular, the full graded ring is generated in weights~$2$, $5$, $6$, and~$15$, and
\begin{equation}\label{eq:M6-dimension}
 \dim M_6(\SL_2(\cO_K))=2,
 \qquad
 \dim S_6(\SL_2(\cO_K))=1.
\end{equation}
The two forms~$E_{6,\,K}$ and~$E_{2,\,K}^3$ have constant term~$1$, while their coefficients at $a=1$ are respectively~$2520/67$ and~$360$.  Their difference is therefore a nonzero cusp form, so the conditions ``constant term~$1$'' and ``coefficient at $a=1$ equal to~$0$'' are independent.  Hence there is a unique such weight-$6$ form; denote it by~$\Theta_{\mathrm{ext}}$.  \Cref{prop:coefficient-saturation} shows that every nonsplit class has this scalar theta series.  Since the normalized mass weight of the split class is
\[
 \frac{|\Aut_{\cI}(\Ls)|^{-1}}{\Mass(\cG_3)}=\frac7{67},
\]
the total normalized mass of the nonsplit part is~$60/67$, and the Siegel--Weil average gives
\begin{equation}\label{eq:extremal-theta}
 \boxed{
 \Theta_{\mathrm{ext}}
 =\frac{67E_{6,\,K}-7E_{2,\,K}^3}{60}.}
\end{equation}
Here~$60/67$ is the aggregate mass weight of all nonsplit classes.  Thus the automorphic average determines the scalar theta series of any concrete nonsplit lattice before either the class number or its full automorphism group is known.

\subsection{The first extremal shell coefficients}\label{sec:extremal-shell-counts}
From $\Theta_{\cI}=E_{2,\,K}$ and the ideal norms $N((2))=4$ and $N((2+\tau))=5$, one obtains
\[
 r_{\cI}(1)=r_{\cI}(\tau^2)=120,
 \qquad
 r_{\cI}(2)=600,
 \qquad
 r_{\cI}(2+\tau)=720.
\]
Therefore
\[
 r_{\Ls}(2)=3\cdot600+3\cdot120^2=45\,000,
 \qquad
 r_{\Ls}(2+\tau)=3\cdot720+6\cdot120^2=88\,560,
\]
where the two-coordinate decompositions in the second formula are $1+\tau^2=2+\tau$.  For a Hilbert modular form~$f$, let~$[a]f$ denote its coefficient of
$\eK(\alpha^{-1}az)$.  The corresponding Eisenstein coefficients are
\[
 [2]E_{6,\,K}=\frac{2520}{67}(1+4^5),
 \qquad
 [2+\tau]E_{6,\,K}=\frac{2520}{67}(1+5^5).
\]
Taking coefficients in \eqref{eq:extremal-theta} gives
\begin{equation}\label{eq:extremal-small-counts}
 \boxed{
 r_{\mathrm{ext}}(2)=37\,800,
 \qquad
 r_{\mathrm{ext}}(2+\tau)=120\,960,
 \qquad
 r_{\mathrm{ext}}(2\tau^2)=37\,800.}
\end{equation}
The following arithmetic observation identifies the projective shell
cardinalities without using the design identities.

\begin{lemma}[primitive shell lines]\label{lem:primitive-shell-lines}
Let~$L\in\cG_3$ be extremal, $a\in\{2,\,2+\tau,\,2\tau^2\}$, and~$x\in L_a$.  Then $L\cap xD=x\cI$, and the norm-$a$ vectors on
this line are exactly~$x\cI^1$.  Thus the three projective shells~$L_a/\cI^1$ are sets of~$315$, $1008$, and~$315$ points.
\end{lemma}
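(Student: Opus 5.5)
The plan is to reduce the statement to a one-dimensional problem in $D$ and then use extremality to rule out anything strictly larger than $x\cI$. Fix $x\in L_a$ and put
\[
 J=\{q\in D:\ xq\in L\},
\]
so that $L\cap xD=xJ$. Since $L$ is a right $\cI$-lattice and $x\ne0$, $J$ is a full right $\cI$-lattice in $D$ containing $\cI$. Because $D$ is split at every finite place and $\cI_{\mathfrak p}$ is maximal, $J$ is locally principal. I would then invoke that the icosian order has right class number one to write $J=\mu\cI$ for some $\mu\in D^\times$. As a consistency check, this is the arithmetic counterpart of the rank-$1$ computation $\Mass_1=1/|\cI^1|$ (one class), after rescaling the form on $J$ by the norm ideal. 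This class-number input is the step I expect to need the most care in justifying within the paper's framework. If the rank-$1$ mass cannot be used directly, I would simply cite class number one for the icosians.

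Next comes the norm argument. Since $1\in\mu\cI$, we have $\mu^{-1}\in\cI$, hence $n:=\nrd(\mu^{-1})\in\cO_K^+$. The vector $y=x\mu\in L$ has Hermitian norm $h(y,y)=\nrd(\mu)\,a=a/n$. Unimodularity makes $h(y,y)\in\cO_K$, so $n$ divides $a$ in $\cO_K$ and $a/n\in\cO_K^+$. We now run through the three cases. For $a=2$, the prime $2$ is inert, so up to totally positive units $\tau^{2k}$ the divisor is $n=\tau^{2k}$ or $n=2\tau^{2k}$. The case $a=2\tau^2$ is identical. For $a=2+\tau$, which generates a prime of norm $5$, the divisor is $n=\tau^{2k}$ or $n=(2+\tau)\tau^{2k}$.

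In each case the nonunit option makes $a/n$ a totally positive unit $\tau^{-2k}$ or $\tau^{-2k+2}$. By extremality \eqref{eq:extremal-def} together with the central-unit symmetry \eqref{eq:central-unit-shell-symmetry}, which iterated gives $r_L(\tau^{2m})=0$ for all $m\in\Z$, no such $y$ exists. Hence $n$ is a unit. Then $\mu^{-1}\in\cI$ has unit reduced norm, so it lies in $\cI^\times$, and therefore $J=\mu\cI=\cI$. This proves $L\cap xD=x\cI$.

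For the remaining claims, the norm-$a$ vectors on the line are the elements $xq$ with $q\in\cI$ and $\nrd(q)a=a$, that is, $q\in\cI^1$. Right multiplication by $q$ is injective on $x\ne0$, so each projective point carries exactly $120$ vectors of $L_a$, and distinct lines meet only in $0$. Dividing \eqref{eq:extremal-small-counts} by $|\cI^1|=120$ gives $37\,800/120=315$, $120\,960/120=1008$, and $315$ points respectively.
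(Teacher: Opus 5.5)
Your proof is correct and is essentially the paper's argument: principality of $\{q\in D:xq\in L\}$ (the paper cites Baake--Grimm--Heuer--Zeiner; your rank-one mass remark would also suffice, since narrow class number one of $K$ and the norm theorem let you rescale $J$ to a unimodular rank-one lattice), then $\nrd(\mu^{-1})\mid a$ with $a$ prime up to units, and extremality via the $\tau$-symmetry to force $\mu\in\cI^\times$. The one step the paper states and you leave implicit is that two vectors of $D^3$ whose images at a real place span the same line of $\Hh^3$ already span the same $D$-line (compare a nonzero coordinate); this is what makes $L_a/\cI^1$ a set of distinct points of $\Hh\PP^2$, not just of lines over $D$.
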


\begin{proof}
Every fractional right ideal of the icosian order is principal
\cite[\S2]{BaakeGrimmHeuerZeiner}.  Hence $\mathfrak a=\{q\in D:xq\in L\}=c\cI$ for some~$c\in D^\times$.
Since~$1\in\mathfrak a$, we have~$c^{-1}\in\cI$, and
\[
 b=\nrd(c^{-1})\in\cO_K^+,
 \qquad h(xc,\,xc)=a/b\in\cO_K^+.
\]
The elements~$2$ and~$2+\tau$ are prime in~$\cO_K$ (the former
is inert and the latter has field norm~$5$), and~$2\tau^2$ is
associated to~$2$.  Thus either~$b$ is a unit, in which case~$c^{-1}\in\cI^\times$ and $\mathfrak a=\cI$, or~$a/b$ is a
totally positive unit.  Every such unit is~$\tau^{2k}$; then~$xc\tau^{-k}$ has norm~$1$, contrary to extremality.
The norm-$a$ vectors in~$x\cI$ are precisely~$x\cI^1$, an orbit
of size~$120$.  Two vectors in~$D^3$ on the same real quaternionic
line have their ratio in~$D$, as seen from any nonzero coordinate.
Therefore \eqref{eq:extremal-small-counts} gives the asserted
cardinalities at either real place.  Multiplication by~$\tau$
identifies the first and third sets projectively.
\end{proof}

\section{The icosian Leech lattice and quaternionic projective designs}\label{sec:rank3-venkov}

The scalar Hilbert theta series supplied the shell counts used in the classification.  We now pass to their one-place spherical refinements.  The problem is to determine three trace-minimal Fourier coefficients; their vanishing is equivalent to the projective design statement.

\subsection{Quaternionic projective harmonics and designs}\label{sec:projective-harmonics}

Fix the real embedding~$\sigma_1$; the same arguments apply at~$\sigma_2$.
At either place, $D\otimes_K\R\cong\Hh$.  For~$v\in D^3$, write
$v^{(i)}\in\Hh^3$ for its image at~$\sigma_i$.
For a nonzero vector~$x\in\Hh^3$, let
\[
 p_x=\frac{xx^*}{h(x,\,x)}\in\Herm_3(\Hh)
\]
be the rank-one orthogonal projector onto the right line~$x\Hh$.
Its projective point is~$[x]\in\Hh\PP^2$; we also identify this point
with~$p_x$.  For a quaternion~$q\in\Hh$, its squared absolute value is
$|q|^2=q\overline q$.  For a quaternionic matrix, $\tr$ denotes the sum of its diagonal entries; this need not be real
unless the matrix is Hermitian.  On the real Jordan algebra
$J=\Herm_3(\Hh)$ put $\Tr_J(a)=\tr(a)\in\R$ and
\[
 T(a,\,b)=\operatorname{Re}\tr(ab)
       =\Tr_J\!\left(\frac{ab+ba}{2}\right).
\]
Thus~$\Tr_J$, the field trace~$\Tr_{K/\Q}$, and the reduced
quaternionic trace~$\trd$ have distinct meanings.  The projective
inner product is
\begin{equation}\label{eq:projective-inner-product}
 t([x],\,[y])=\operatorname{Re}\tr(p_xp_y)=T(p_x,\,p_y)
 =\frac{|h(x,\,y)|^2}{h(x,\,x)h(y,\,y)}
 \in[0,\,1].
\end{equation}

For~$r\ge0$, let~$\cH_r$ be the space of right-$\Sp(1)$-invariant real harmonic
homogeneous polynomials of degree~$2r$ on~$\Hh^3$.  It is an irreducible
spherical~$\Sp(3)$-module.  Its elements give projective harmonics by
\[
 \widehat P([x])=\frac{P(x)}{h(x,\,x)^r}
 \qquad(P\in\cH_r,\,x\ne0).
\]
Write~$\mu$ for the invariant probability measure on~$\Hh\PP^2$.
For these harmonic spaces and their zonal polynomials, see
\cite{MohammadpourWaldron}.

\begin{definition}\label{def:projective-design}
A finite set $X\subset\Hh\PP^2$ is a quaternionic projective~$s$-design if
\[
 \frac1{|X|}\sum_{x\in X}P(x)
 =\int_{\Hh\PP^2}P\,d\mu
\]
for every polynomial of degree at most~$s$ in the real entries of the
projector~$p_x$.  Equivalently,
\[
 \sum_{x\in X}\widehat P(x)=0
 \qquad(P\in\cH_r,\,1\le r\le s).
\]
\end{definition}
The same definition applies to finite multisets, with every sum taken with multiplicity.  This is Hoggar's Delsarte-space definition; its equivalence with the quaternionic spherical~$(s,\,s)$-design formulation and its relation to quaternionic tight frames are discussed in \cite{HoggarDesigns,Waldron,WaldronFrames}.

For fixed~$x$ and Haar-random~$y$, the variable~$t(x,\,y)$ has beta
distribution with parameters~$2$ and~$4$.  With the rising factorials
$(a)_0=1$ and $(a)_k=a(a+1)\cdots(a+k-1)$ for~$k>0$, its moments are
\begin{equation}\label{eq:haar-moments}
 \int_{\Hh\PP^2}t(x,\,y)^k\,d\mu(y)
 =\frac{(2)_k}{(6)_k}
 \qquad(k\ge0).
\end{equation}
Using the Jacobi polynomials~$P_r^{(3,\,1)}$, define the normalized zonal polynomials
\begin{equation}\label{eq:projective-zonal-polynomials}
 Q_r(t)=\frac{P_r^{(3,\,1)}(2t-1)}{\binom{r+3}{3}},
 \qquad Q_r(1)=1.
\end{equation}

\subsection{Spherical Hilbert theta coefficients}\label{sec:spherical-theta}

Let~$P\in\cH_r$, viewed as a right-$\Sp(1)$-invariant harmonic polynomial of ordinary degree~$2r$ on the first real component of~$D^3$.  Define
\begin{equation}\label{eq:weighted-hilbert-theta}
 \Theta^{(1)}_{L,\,P}(z_1,\,z_2)
 =\sum_{v\in L}P(v^{(1)})
 \eK\!\left(\alpha^{-1}h(v,\,v)z\right).
\end{equation}
Standard harmonic-theta transformation gives
\begin{equation}\label{eq:weighted-hilbert-weight}
 \Theta^{(1)}_{L,\,P}
 \in S_{(6+2r,\,6)}(\SL_2(\cO_K))
 \qquad(r>0)
\end{equation}
\cite{Coulangeon94,Bachoc}.  This nonparallel Hilbert weight is the natural automorphic home for a one-place spherical coefficient of the compact group of type~$C_3$.

For general~$a\in\cO_K^+$, the quotient~$L_a/\cI^1$ is a
projective multiset, with one occurrence per right-unit orbit.
For extremal~$L$ and the three trace-minimal norms it is a set by
\cref{lem:primitive-shell-lines}.  For~$x\in L_a$, the homogeneous
normalization gives
\[
 \widehat P([x^{(1)}])
 =\frac{P(x^{(1)})}{\sigma_1(a)^r}.
\]

\begin{lemma}[Fourier coefficients and projective shells]\label{lem:coefficient-design}
The Fourier coefficient of \eqref{eq:weighted-hilbert-theta} at~$a$ is
\begin{equation}\label{eq:coefficient-projective-shell}
 \sum_{v\in L_a}P(v^{(1)})
 =120\,\sigma_1(a)^r
  \sum_{\ell\in L_a/\cI^1}\widehat P(\ell).
\end{equation}
Consequently, for a fixed~$s$, the coefficients at~$a$ vanish for every~$P\in\cH_r$ and~$1\le r\le s$ if and only if the projective shell multiset~$L_a/\cI^1$ is a quaternionic projective~$s$-design.
\end{lemma}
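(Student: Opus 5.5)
The plan is to read off the Fourier coefficient directly from the definition \eqref{eq:weighted-hilbert-theta}, then regroup the sum by right-unit orbits. First, collecting the terms of \eqref{eq:weighted-hilbert-theta} with $h(v,v)=a$ shows that the coefficient of $\eK(\alpha^{-1}az)$ is exactly $\sum_{v\in L_a}P(v^{(1)})$. This is immediate, since the exponential depends only on $h(v,v)$ and distinct $a\in\cO_K^+$ give distinct characters on $\mathfrak H^2$.

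Second, I will decompose $L_a$ into orbits under right multiplication by $\cI^1$. The group $\cI^1$ acts on $L_a$ because $L$ is a right $\cI$-module and $h(vu,vu)=\overline u\,h(v,v)\,u=a\,\nrd(u)=a$, using that $a\in K$ is central. The action is free: if $vu=v$ with $v\ne0$, then some coordinate $v_j\neq0$ is invertible in the division algebra $D$, so $u=1$. Hence every orbit has exactly $120$ elements by \eqref{eq:icosian-units}. Right $\Sp(1)$-invariance of $P$ at $\sigma_1$ makes $P(v^{(1)})$ constant on each orbit, since $u^{(1)}$ is a unit quaternion. Homogeneity then gives
\[
P(v^{(1)})=h(v^{(1)},v^{(1)})^r\,\widehat P([v^{(1)}])=\sigma_1(a)^r\,\widehat P([v^{(1)}]).
\]
Summing over orbits yields \eqref{eq:coefficient-projective-shell}. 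Here the multiset $L_a/\cI^1$ is viewed in $\Hh\PP^2$ via $\sigma_1$, with one occurrence per orbit. Distinct orbits may land on the same projective point, which is exactly why the shell is treated as a multiset.

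Third, for the equivalence, note that $120\,\sigma_1(a)^r>0$ because $a$ is totally positive. So the coefficient at $a$ vanishes if and only if $\sum_{\ell}\widehat P(\ell)=0$, with the sum taken over the multiset. Requiring this for all $P\in\cH_r$ and $1\le r\le s$ is, by \cref{def:projective-design} extended to multisets, precisely the condition that $L_a/\cI^1$ be a projective $s$-design.

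I expect no serious obstacle. The one point needing care is that the harmonic characterization in \cref{def:projective-design} is taken as the definition: the spaces $\cH_r$ with $r\le s$, together with constants, span the degree-$\le s$ polynomials in the entries of $p_x$. The paper asserts this equivalence and I will simply invoke it rather than reprove it.
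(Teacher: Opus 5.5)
Your proposal is correct and follows the paper's proof essentially verbatim: the free right action of $\cI^1$ with orbits of size $120$, constancy of $P$ on each orbit by right $\Sp(1)$-invariance, the factor $\sigma_1(a)^r$ from homogeneity, and then the harmonic criterion of \cref{def:projective-design}. Your added checks make explicit what the paper leaves implicit; these are that $\cI^1$ preserves $L_a$, that the action is free via an invertible coordinate, and that $120\,\sigma_1(a)^r$ is positive.
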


\begin{proof}
Right multiplication by~$\cI^1$ acts freely on every nonzero shell, and each orbit has size $|\cI^1|=120$.  The polynomial~$P$ is right-$\Sp(1)$-invariant and homogeneous of degree~$2r$, so it is constant on such an orbit and differs from~$\widehat P$ by the factor~$\sigma_1(a)^r$.  Formula \eqref{eq:coefficient-projective-shell} follows, and the last assertion is exactly the criterion in \cref{def:projective-design}.
\end{proof}

We shall prove vanishing at $a=2$, $2+\tau$, and~$2\tau^2$ for~$1\le r\le5$.  This cannot follow merely from vanishing of the
ambient mixed-weight cusp-form space.  Indeed, for the split lattice
the coefficient at $a=1$ is $120\sum_{j=1}^3\widehat P([e_j])$.
For a zonal harmonic about~$[e_1]$, this sum is
\[
 1+2Q_r(0)=1+\frac{12(-1)^r}{(r+2)(r+3)}\ne0
 \qquad(r\ge2).
\]
Thus $S_{(6+2r,\,6)}(\SL_2(\cO_K))\ne0$ for every~$r\ge2$;
the trace-lattice argument below supplies the additional constraints.

\subsection{The trace lattice and the three trace-minimal Hermitian shells}

\begin{lemma}[the extremal trace lattice]\label{lem:extremal-trace-leech}
Let~$L$ be a unimodular right~$\cI$-lattice of rank~$3$.  On the underlying~$\Z$-module define
\begin{equation}\label{eq:trace-bilinear-form}
 B_L(x,\,y)=\Tr_{K/\Q}\!\left(\alpha^{-1}\trd(h(x,\,y))\right).
\end{equation}
Then~$(L,\,B_L)$ is an even unimodular lattice of rank~$24$.  If~$L$ is extremal, this lattice has no roots and hence is the Leech
lattice~$\Lambda$.  Writing~$\Min$ for the set of nonzero vectors of
minimum norm, its minimal shell is the disjoint union
\begin{equation}\label{eq:leech-shell-splitting}
 \Min(L,\,B_L)
 =L_2\sqcup L_{2+\tau}\sqcup L_{2\tau^2}.
\end{equation}
\end{lemma}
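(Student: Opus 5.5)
The plan has three parts: show that $B_L$ is even, integral and unimodular on the rank-$24$ $\Z$-module $L$; then compute the norm of a single vector; then read off both the root-freeness and the shell splitting \eqref{eq:leech-shell-splitting} from that norm formula.

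\textbf{Evenness, integrality and unimodularity.} The $\Z$-rank is $3\cdot4\cdot2=24$. Since $\trd(h(x,x))=2h(x,x)$, we get
\[
 B_L(x,\,x)=2\Tr_{K/\Q}(\alpha^{-1}h(x,\,x)).
\]
For $a=m+n\tau$ the paper's computation gives $\Tr_{K/\Q}(\alpha^{-1}a)=m\in\Z$, so $B_L(x,x)$ is even. Integrality of $B_L$ then follows from the two dualities below.

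\emph{Quaternion step.} The dual of $\cI$ under $\trd$ is $\cI$ itself, because $\cI$ is maximal and $D$ is split at all finite places, so the reduced discriminant is $1$.

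\emph{Field step.} The dual of $\cO_K$ under $\Tr_{K/\Q}$ is $\dK^{-1}=\alpha^{-1}\cO_K$. Hence the form $(q,r)\mapsto\Tr_{K/\Q}(\alpha^{-1}\trd(\overline q r))$ on $\cI$ is integral and unimodular.

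\emph{Lattice step.} Given a $\Z$-linear functional $f$ on $L$ with $f(L)\subseteq\Z$, write $f(y)=\Tr(\alpha^{-1}\trd(\phi(y)))$, where $\phi(y)\in D$ is right $\cI$-linear; this uses the two duality steps. Then $\phi(L)\subseteq\cI$ by the duality of $\cI$. Nondegeneracy of $h$ gives $\phi=h(x,\cdot)$ for some $x\in D^3$, so $x\in L^\#=L$.

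This shows the $\Z$-dual of $(L,B_L)$ is $L$, so the lattice is even unimodular. I expect this transfer of unimodularity through the two trace maps to be the only step needing care, since it requires the precise discriminant and different normalizations.

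\textbf{The norm of a vector.} Let $a=h(x,x)=m+n\tau$ be totally positive. Then $B_L(x,x)=2m$, and total positivity gives $m>0$, since both conjugates $m+n\tau$ and $m+n\tau'$ are positive with $\tau+\tau'=1$.

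\textbf{Roots.} A vector with $B_L(x,x)=2$ has $m=1$. Total positivity then forces $n\in\{0,1\}$: $1+n\tau>0$ and $1+n(1-\tau)>0$ give $-1/\tau<n<1/(\tau-1)=\tau$. So $a\in\{1,\tau^2\}$, as noted after \eqref{eq:extremal-def}. Extremality, together with \eqref{eq:central-unit-shell-symmetry}, rules out both values. Hence there are no roots, and by Conway's characterization of the Leech lattice as the unique rootless even unimodular lattice of rank $24$ we get $(L,B_L)\cong\Lambda$.

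\textbf{Minimal shell.} The minimal vectors of $\Lambda$ have norm $4$, i.e.\ $m=2$. Solving $2+n\tau>0$ and $2+n(1-\tau)>0$ gives $-2/\tau<n<2\tau$, so $n\in\{-1,0,1,2,3\}$. Now $2-\tau=\tau'^2$ has conjugate $\tau^2$ and is totally positive, so the candidates are
\[
 a\in\{2-\tau,\,2,\,2+\tau,\,2+2\tau=2\tau^2,\,2+3\tau\}.
\]
We must exclude the two extreme values. They are $\tau^{-2}$ times $1$ and $\tau^2$ times $\tau^2$: $2-\tau=\tau^{-2}$ and $2+3\tau=\tau^4$ are units. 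Multiplying by a suitable power of $\tau$ would produce a vector of norm $1$, which extremality forbids. The three remaining norms $2$, $2+\tau$, $2\tau^2$ are distinct, so the shells are disjoint, which gives \eqref{eq:leech-shell-splitting}.

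As a consistency check, \eqref{eq:extremal-small-counts} gives $37800+120960+37800=196560$, the number of minimal vectors of $\Lambda$.
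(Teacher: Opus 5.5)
Your proposal is correct and follows the paper's proof: the same self-duality of $\cI$ under $\trd$ together with $\alpha^{-1}\cO_K=\dK^{-1}$ gives integral unimodularity (you spell out the dual-functional transfer that the paper compresses into one sentence), and the same trace-index computation $B_L(x,x)=2m$, combined with the unit shift by powers of $\tau$, excludes roots and the norms $\tau^{-2}$ and $\tau^4$. The only looseness is your justification of $m>0$, which actually follows from total positivity of $\alpha$ (equivalently $\tau>0>\tau'$) rather than from $\tau+\tau'=1$; the paper leaves this positivity implicit.
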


\begin{proof}
The quaternion algebra~$D$ has trivial finite discriminant, so a maximal order is self-dual for the reduced-trace pairing.  Together with $\alpha^{-1}\cO_K=\mathfrak d_K^{-1}$ and the Hermitian unimodularity of~$L$, this shows that \eqref{eq:trace-bilinear-form} is integral and unimodular.  Since~$h(x,\,x)\in K$,
\[
 B_L(x,\,x)=2\Tr_{K/\Q}\!\left(\alpha^{-1}h(x,\,x)\right)\in2\Z,
\]
so the lattice is even.  Its rank is $3\cdot4\cdot2=24$.

Assume now that~$L$ is extremal.  Write $a=m+n\tau\in\cO_K$.  The identity
\begin{equation}\label{eq:trace-index-coordinate}
 \Tr_{K/\Q}(\alpha^{-1}a)=m
\end{equation}
follows from $\Tr_{K/\Q}(\alpha^{-1})=1$ and $\Tr_{K/\Q}(\alpha^{-1}\tau)=0$.  If~$a$ is totally positive and $m=1$, then $n=0$ or~$1$, so $a=1$ or $a=\tau^2$.  Extremality therefore excludes vectors of~$B_L$-norm~$2$.

If~$a$ is totally positive and $m=2$, then~$-1\le n\le3$, and the five possibilities are
\[
 2-\tau=\tau^{-2},\qquad
 2,\qquad
 2+\tau=\alpha,\qquad
 2+2\tau=2\tau^2,\qquad
 2+3\tau=\tau^4.
\]
Multiplication by the central unit~$\tau$ sends~$L_a$ bijectively to~$L_{\tau^2a}$.  Hence $L_{\tau^{-2}}=\varnothing$ because $L_1=\varnothing$, and $L_{\tau^4}=\varnothing$ because $L_{\tau^2}=\varnothing$.  This proves \eqref{eq:leech-shell-splitting}.  The unique rootless even unimodular lattice of rank~$24$ is the Leech lattice \cite[Ch.~24]{ConwaySloane}.
\end{proof}

\begin{remark}[Forgetting to the trace lattice]\label{rem:trace-forgetful}
The passage~$(L,\,h)\mapsto(L,\,B_L)$ forgets the~$\cO_K$- and~$\cI$-module structures.  It gives~$E_8^3$ for the split class, as seen in \cref{sec:split-quinary-code}, and the Leech lattice for every extremal class; Nebe's golden-lattice viewpoint retains the intermediate~$\cO_K$-structure \cite{NebeGolden}.  The trace lattice sees only the union in \eqref{eq:leech-shell-splitting}, so its harmonic identities must be supplemented by the two-place radial separation below to distinguish the three Hermitian norm types.
\end{remark}

\begin{remark}[diagonal restriction]\label{rem:diagonal-theta-check}
Setting $z_1=z_2=z$ in \eqref{eq:scalar-theta} gives the ordinary theta series of the trace lattice:
\[
 \Theta_L(z,\,z)=\sum_{x\in L}q^{B_L(x,\,x)/2},
 \qquad q=e^{2\pi iz}.
\]
Consequently
\begin{align*}
 \Theta_{\Ls}(z,\,z)&=\Theta_{E_8}(z)^3=E_4(z)^3
   =1+720q+179\,280q^2+\cdots,\\
 \Theta_{\mathrm{ext}}(z,\,z)&=\Theta_{\Lambda}(z)
   =E_4(z)^3-720\Delta(z)
   =1+196\,560q^2+16\,773\,120q^3+\cdots.
\end{align*}
This supplies a direct consistency check on the Hilbert-theta normalization and on the first shell coefficients.
\end{remark}

The shell sizes from \eqref{eq:extremal-small-counts} give the familiar numerical decomposition
\begin{equation}\label{eq:leech-minimal-decomposition-count}
 196\,560=37\,800+120\,960+37\,800.
\end{equation}
The point is to separate these three pieces at the level of harmonic sums.

\begin{lemma}[the required Leech harmonic vanishings]\label{lem:leech-harmonic-vanishing}
Let~$(\Lambda,\,B)$ be the Leech lattice.  If~$H$ is a harmonic homogeneous polynomial on~$\R^{24}$ of degree
\[
 d\in\{2,\,4,\,6,\,8,\,10,\,14\},
\]
then
\begin{equation}\label{eq:leech-harmonic-shell-zero}
 \sum_{x\in\Min(\Lambda)}H(x)=0.
\end{equation}
\end{lemma}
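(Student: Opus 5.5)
The plan is Venkov's argument via harmonic theta series of weight $12+d$ for $\SL_2(\Z)$. For a harmonic homogeneous polynomial $H$ of degree $d>0$ on $\R^{24}$, define
\[
 \Theta_{\Lambda,\,H}(z)=\sum_{x\in\Lambda}H(x)\,q^{B(x,\,x)/2}.
\]
By the standard (Hecke--Schoeneberg) transformation law, since $\Lambda$ is even unimodular, this is a cusp form of weight $12+d$ for $\SL_2(\Z)$. Because $\Lambda$ has no vectors of norm~$2$, the first possible nonzero coefficient is the one at $q^2$, and it equals $\sum_{x\in\Min(\Lambda)}H(x)$. So $\Theta_{\Lambda,\,H}$ vanishes to order at least~$2$ at the cusp.

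The key step is a dimension argument. Every cusp form of weight $k$ for $\SL_2(\Z)$ has the form $\Delta\cdot g$ with $g\in M_{k-12}$. If the form also vanishes to order~$2$, then $g$ is itself a cusp form of weight $k-12$, so $\Theta_{\Lambda,\,H}\in\Delta\,S_{d}(\SL_2(\Z))$. For odd $d$ the sum over $\Min(\Lambda)$ vanishes trivially by $x\mapsto -x$. For even $d$, we have $S_d(\SL_2(\Z))=0$ exactly when $d\in\{2,4,6,8,10,14\}$, the first nonzero case being $S_{12}=\C\Delta$. Hence for these $d$ the form $\Theta_{\Lambda,\,H}$ vanishes identically, and in particular its $q^2$-coefficient $\sum_{x\in\Min(\Lambda)}H(x)$ is zero.

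There is little obstacle here. The only points needing care are that the weight is $12+d$ (half the rank plus $d$), that $\Theta_{\Lambda,\,H}$ is cuspidal for $d>0$, and that order of vanishing~$2$ at the cusp, rather than~$1$, is what reduces the question to $S_d$ rather than $M_d$. This gives exactly the stated list, including $d=14$ and excluding $d=12$. One may also note the known stronger fact that $\Min(\Lambda)$ is a spherical $11$-design, but only the listed even degrees are needed later.
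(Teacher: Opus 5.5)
Your proof is correct and is essentially the paper's argument: cuspidality of the weight-$12+d$ harmonic theta series plus the absence of norm-$2$ vectors kills $\Theta_{\Lambda,\,H}$. The only cosmetic difference is in the last step. The paper notes $S_{14}=0$ and $S_{12+d}=\C\,\Delta E_d$ with nonzero $q$-coefficient, whereas you divide by $\Delta$ to land in $S_d(\SL_2(\Z))=0$; these are equivalent formulations of the same dimension count.
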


\begin{proof}
The weighted theta series
\[
 \Theta_{\Lambda,\,H}(z)
 =\sum_{x\in\Lambda}H(x)q^{B(x,\,x)/2}
\]
is a cusp form of weight~$12+d$ for~$\SL_2(\Z)$.  The Leech lattice has no vectors of norm~$2$, so its coefficient of~$q$ is zero.  Now
\[
 S_{14}(\SL_2(\Z))=0,
 \qquad
 S_{12+d}(\SL_2(\Z))=\C\,\Delta E_d
 \quad(d=4,\,6,\,8,\,10,\,14),
\]
and every displayed generator~$\Delta E_d$ has nonzero coefficient of~$q$.  Thus $\Theta_{\Lambda,\,H}=0$ in all six degrees, and in particular its coefficient on the minimal shell vanishes.  This is the usual Venkov argument for the spherical~$11$-design property of the Leech shell, together with its additional degree-$14$ vanishing \cite{Venkov,Bachoc,Pache}.
\end{proof}

\subsection{Harmonic separation at the two real places}

Let~$W_1$ and~$W_2$ be the two real components of~$L\otimes_\Z\R$, each identified with~$\Hh^3$.  We rescale the trace form globally by~$1/2$ and write a lattice vector as $(X_v,\,Y_v)\in W_1\oplus W_2$ so that
\begin{equation}\label{eq:two-place-radii}
 \varrho_1(v)=\lVert X_v\rVert^2
 =\sigma_1(\alpha^{-1}h(v,\,v)),
 \qquad
 \varrho_2(v)=\lVert Y_v\rVert^2
 =\sigma_2(\alpha^{-1}h(v,\,v)).
\end{equation}
This global rescaling does not change harmonicity or spherical-design identities.  Put
\[
 r_-=1-\frac1{\sqrt5},
 \qquad
 r_+=1+\frac1{\sqrt5}.
\]
For the three pieces of \eqref{eq:leech-shell-splitting}, the component radii are
\begin{equation}\label{eq:three-radius-table}
\begin{array}{c|cc}
 a&\varrho_1&\varrho_2\\ \hline
 2&r_-&r_+\\
 2+\tau&1&1\\
 2\tau^2&r_+&r_-.
\end{array}
\end{equation}

\begin{remark}[a radial check of the shell counts]\label{rem:radial-count-check}
The quartic $\varrho_1^2-\frac73\varrho_1\varrho_2+\varrho_2^2$
is harmonic: its Laplacian is $56(\varrho_1+\varrho_2)-\frac73\,24(\varrho_1+\varrho_2)=0$.
Its values on the three rows of \eqref{eq:three-radius-table} are~$8/15$, $-1/3$, and~$8/15$.  Thus, with $A=r_L(2)=r_L(2\tau^2)$ and $B=r_L(2+\tau)$, the ordinary harmonic-theta identity gives
\[
 \frac{16}{15}A-\frac13B=0,
 \qquad 2A+B=196\,560,
 \qquad (A,\,B)=(37\,800,\,120\,960).
\]
This checks the Hilbert-modular calculation using only the ordinary
theta series and the degree-$4$ vanishing of
\cref{lem:leech-harmonic-vanishing}; both follow from even
unimodularity and absence of roots, without Leech uniqueness.
\end{remark}

Let~$P\in\cH_r$, realized as a right-$\Sp(1)$-invariant harmonic polynomial of ordinary degree~$2r$ on~$W_1$.  A fixed scalar rescaling of its argument is immaterial, so we use the same symbol~$P$ in the coordinates of \eqref{eq:two-place-radii}.  Define
\begin{equation}\label{eq:ABC-harmonic-sums}
 A_r=\sum_{v\in L_2}P(X_v),
 \qquad
 B_r=\sum_{v\in L_{2+\tau}}P(X_v),
 \qquad
 C_r=\sum_{v\in L_{2\tau^2}}P(X_v).
\end{equation}
Multiplication by the central unit~$\tau$ gives a bijection~$L_2\to L_{2\tau^2}$ and multiplies the first real component by~$\tau$.  Homogeneity therefore gives
\begin{equation}\label{eq:C-tau-A}
 C_r=\tau^{2r}A_r.
\end{equation}
Since~$P(X)$ is harmonic on~$W_1\oplus W_2$ and has degree~$2r\le10$, \cref{lem:leech-harmonic-vanishing} gives
\begin{equation}\label{eq:first-separation-equation}
 A_r+B_r+C_r=0.
\end{equation}

For~$1\le r\le4$, put $\beta_r=3/(r+3)$ and consider
\begin{equation}\label{eq:low-degree-separator}
 H_r(X,\,Y)=P(X)\bigl(\varrho_2-\beta_r\varrho_1\bigr).
\end{equation}
Here $\dim_\R W_1=\dim_\R W_2=12$, and Euler's identity gives
\[
 \Delta_X(\varrho_1P)=8(r+3)P,
 \qquad
 \Delta_Y(\varrho_2P)=24P.
\]
Thus~$H_r$ is harmonic.  Its degree is~$2r+2\in\{4,\,6,\,8,\,10\}$, so summing it over the Leech minimal shell yields
\begin{equation}\label{eq:second-separation-equation}
 (r_+-\beta_r r_-)A_r+(1-\beta_r)B_r
 +(r_--\beta_r r_+)C_r=0.
\end{equation}
Substituting \eqref{eq:C-tau-A} and \eqref{eq:first-separation-equation} into \eqref{eq:second-separation-equation} gives
\[
 \frac{1+\beta_r}{\sqrt5}(1-\tau^{2r})A_r=0.
\]
The coefficient is nonzero, and hence
\begin{equation}\label{eq:ABC-zero-low}
 A_r=B_r=C_r=0
 \qquad(1\le r\le4).
\end{equation}

For $r=5$, a quadratic radial factor would have total degree~$12$, where the Leech minimal shell need not annihilate all harmonics.  Instead use the degree-$14$ harmonic
\begin{equation}\label{eq:degree14-separator}
 H_5(X,\,Y)=P(X)\mathcal R(\varrho_1,\,\varrho_2),
 \qquad
 \mathcal R(u,\,v)=v^2-\frac78uv+\frac{21}{136}u^2.
\end{equation}
Indeed,
\[
 \Delta(P\varrho_2^2)=56P\varrho_2,
 \qquad
 \Delta(P\varrho_1\varrho_2)=64P\varrho_2+24P\varrho_1,
 \qquad
 \Delta(P\varrho_1^2)=136P\varrho_1,
\]
so $\Delta H_5=0$.  The degree-$14$ case of \cref{lem:leech-harmonic-vanishing} gives
\begin{equation}\label{eq:degree14-separation-equation}
 \mathcal R(r_-,\,r_+)A_5+\mathcal R(1,\,1)B_5+\mathcal R(r_+,\,r_-)C_5=0.
\end{equation}
The three values are
\begin{align*}
 \mathcal R(1,\,1)&=\frac{19}{68},\\
 \mathcal R(r_-,\,r_+)&=\frac{233}{340}+\frac{23\sqrt5}{68},\\
 \mathcal R(r_+,\,r_-)&=\frac{233}{340}-\frac{23\sqrt5}{68}.
\end{align*}
Using \eqref{eq:C-tau-A} and \eqref{eq:first-separation-equation}, equation \eqref{eq:degree14-separation-equation} becomes $D_5A_5=0$, where
\begin{align}
 D_5
 &=\mathcal R(r_-,\,r_+)+\tau^{10}\mathcal R(r_+,\,r_-)
   -(1+\tau^{10})\mathcal R(1,\,1)\notag\\
 &=-\frac{115}{136}(25+11\sqrt5)\ne0.
 \label{eq:degree14-determinant}
\end{align}
Consequently
\begin{equation}\label{eq:ABC-zero-five}
 A_5=B_5=C_5=0.
\end{equation}

\begin{theorem}[rank-$3$ icosian Venkov theorem]\label{thm:rank3-venkov}
Let~$L$ be an extremal unimodular icosian lattice of quaternionic rank~$3$.  For every~$P\in\cH_r$ with~$1\le r\le5$, the three trace-minimal Fourier coefficients of \eqref{eq:weighted-hilbert-theta} vanish:
\begin{equation}\label{eq:three-weighted-zero}
 \sum_{v\in L_2}P(v^{(1)})
 =\sum_{v\in L_{2+\tau}}P(v^{(1)})
 =\sum_{v\in L_{2\tau^2}}P(v^{(1)})
 =0.
\end{equation}
Equivalently, for every $a\in\{2,\,2+\tau,\,2\tau^2\}$, the projective shell $L_a/\cI^1\subset\Hh\PP^2$ is a quaternionic projective~$5$-design.
\end{theorem}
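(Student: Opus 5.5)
The theorem packages what the preceding subsection has already computed, so the plan is to assemble those identities and then translate them through \cref{lem:coefficient-design}. Fix $L$ extremal and $P\in\cH_r$ with $1\le r\le5$, viewed as a harmonic polynomial on $W_1\subset W_1\oplus W_2\cong L\otimes_\Z\R$. By \cref{lem:extremal-trace-leech} the trace lattice $(L,B_L)$ is the Leech lattice, and its minimal shell is $L_2\sqcup L_{2+\tau}\sqcup L_{2\tau^2}$. The sums $A_r$, $B_r$, $C_r$ of \eqref{eq:ABC-harmonic-sums} are exactly the three Fourier coefficients in \eqref{eq:three-weighted-zero}, because $P(v^{(1)})$ agrees with $P(X_v)$ up to a fixed nonzero scalar coming from the global rescaling.

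First, use the central unit $\tau$, which maps $L_2$ bijectively onto $L_{2\tau^2}$ and scales $X_v$ by $\tau$; this gives $C_r=\tau^{2r}A_r$. Second, apply \cref{lem:leech-harmonic-vanishing} to the degree-$2r$ harmonic $P(X)$, which yields $A_r+B_r+C_r=0$. These two relations leave one unknown, so a third independent linear relation is needed. For $1\le r\le4$, take the separator $H_r=P(X)(\varrho_2-\beta_r\varrho_1)$ with $\beta_r=3/(r+3)$. Its harmonicity follows from the two Laplacian identities already recorded, and its degree $2r+2$ lies in the admissible set $\{4,6,8,10\}$. Evaluating it on the radius table \eqref{eq:three-radius-table} gives \eqref{eq:second-separation-equation}, and elimination reduces this to $\frac{1+\beta_r}{\sqrt5}(1-\tau^{2r})A_r=0$. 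Since $\tau^{2r}\neq1$, we get $A_r=0$, and then $B_r=C_r=0$.

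The main obstacle is $r=5$. A quadratic-in-$\varrho$ separator would have degree $12$, where the Leech minimal shell is not harmonic-annihilating (there is a nonzero weight-$24$ cusp form coefficient). One therefore has to jump to degree $14$ using the quadratic radial factor $\mathcal R(u,v)=v^2-\frac78uv+\frac{21}{136}u^2$. Its coefficients are determined by the three Laplacian formulas so that $\Delta\bigl(P\mathcal R\bigr)=0$; the degree-$14$ case of \cref{lem:leech-harmonic-vanishing} is exactly what makes this work. What remains is a nondegeneracy check. Evaluate $\mathcal R$ at the three radius pairs, substitute $C_5=\tau^{10}A_5$ and $B_5=-(1+\tau^{10})A_5$, and verify that the resulting determinant $D_5$ in \eqref{eq:degree14-determinant} is nonzero; it is a negative multiple of $25+11\sqrt5$. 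Hence $A_5=B_5=C_5=0$. This is a finite computation in $\Q(\sqrt5)$, and I would double-check it by computing separately in the rational and $\sqrt5$ parts.

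Finally, for the design equivalence, combine the vanishing for all $P\in\cH_r$, $1\le r\le5$, with \cref{lem:coefficient-design}. This shows that each multiset $L_a/\cI^1$ is a projective $5$-design at $\sigma_1$. By \cref{lem:primitive-shell-lines} these multisets are honest sets of $315$, $1008$ and $315$ points. The same argument, with the roles of $W_1$ and $W_2$ exchanged, handles $\sigma_2$; equivalently, one can apply the Galois automorphism $\gamma$ of \eqref{eq:icosian-galois-map}, which preserves $\cI$.
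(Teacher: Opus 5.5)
Your proposal is correct and follows the paper's proof essentially verbatim: the relations $C_r=\tau^{2r}A_r$ and $A_r+B_r+C_r=0$, the separator $P(X)(\varrho_2-\beta_r\varrho_1)$ for $1\le r\le4$, the degree-$14$ separator with $D_5\ne0$ for $r=5$, and the translation to designs via \cref{lem:coefficient-design}. One wording slip: the obstructed degree-$12$ separator would be \emph{linear} in $\varrho$, i.e.\ quadratic in coordinates, not quadratic in $\varrho$; the factor $\mathcal R$ you then use is the one quadratic in $\varrho$.
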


\begin{proof}
Up to the fixed nonzero rescaling used in \eqref{eq:two-place-radii}, the quantities~$A_r$, $B_r$, and~$C_r$ in \eqref{eq:ABC-harmonic-sums} are exactly the three Fourier coefficients in \eqref{eq:three-weighted-zero}.  Their vanishing follows from \eqref{eq:ABC-zero-low} and \eqref{eq:ABC-zero-five}.  The equivalent projective statement follows from \cref{lem:coefficient-design}.
\end{proof}

\begin{remark}[Scope and relation to Venkov's method]\label{rem:venkov-scope}
The theorem determines only the three trace-minimal coefficients of
\eqref{eq:weighted-hilbert-theta}, not the entire weighted Hilbert
theta series.  Once the angle distribution is known,
\cref{cor:trace-shell-harmonic-strength} adds degree-$7$ vanishing
and proves that the design strength is exactly~$5$.  The proof is specific to rank~$3$, where exactly three Hermitian norm types survive at trace index~$2$ and can be separated by the two archimedean radii.  The harmonic-theta and trace-lattice ingredients are classical \cite{Venkov,Bachoc,Pache,NebeGolden,Dursthoff}; the additional step used here is the explicit two-place radial separation of the three icosian components of the Leech minimal shell.
\end{remark}

\section{The \texorpdfstring{norm-$2$}{norm-2} shell: angles, moments, and frames}\label{sec:norm2-geometry}

Let~$L\in\cG_3$ be extremal.  By \cref{lem:primitive-shell-lines,thm:rank3-venkov},
\[
 X_L=L_2/\cI^1
\]
is a~$315$-point quaternionic projective~$5$-design.

\subsection{The five projective inner products and valencies}

\begin{proposition}[the possible projective inner products]\label{prop:extremal-shell-angles}
If~$[x]$ and~$[y]$ are distinct projective points occurring in~$X_L$, then
\begin{equation}\label{eq:five-angle-set}
 t([x],\,[y])\in
 \left\{0,\,\frac12,\,\frac14,\,g_+,\,g_-\right\},
 \qquad
 g_\pm=\frac{3\pm\sqrt5}{8}.
\end{equation}
\end{proposition}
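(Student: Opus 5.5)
Take norm-$2$ representatives $x,y\in L_2$ with $[x]\ne[y]$, put $q=h(x,y)\in\cI$ and $n=\nrd(q)=|h(x,y)|^2\in\cO_K$. Then $t([x],[y])=\sigma_1(n)/4$, and at the other place $\sigma_2(n)/4$ is the projective inner product of the conjugate points. The claim is therefore equivalent to
\[
 n\in\{0,\,2,\,1,\,\tau^2,\,\tau^{-2}\},
\]
since $\tau^{\pm2}/4=(3\pm\sqrt5)/8$. The plan is to bound $n$ archimedeanly at both places, which leaves a finite list of totally nonnegative integers, and then to exclude the extra candidates using extremality and reduction modulo~$2$.

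First, Cauchy--Schwarz at each real place gives $0\le\sigma_i(n)\le4$, with equality only when $x$ and $y$ are proportional over $\Hh$. Lemma~\ref{lem:primitive-shell-lines} excludes this: proportional vectors would lie on the same line, so $y\in x\cI^1$ and $[x]=[y]$. Next, use the sharper information from extremality. For any $u\in\cI$, the vector $x-yu$ lies in $L$, and its norm $4-\trd(qu)\cdots$ (more precisely $h(x,x)+\nrd(u)h(y,y)-\trd(\overline{u}\,\overline{q})$) must be $0$, or else totally positive with trace index $m\ge2$. Extremality together with Lemma~\ref{lem:extremal-trace-leech} gives exactly this, because the Leech minimum is $4$ in $B_L$, that is, trace index $\ge2$. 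This is the same as saying that $B_L(x,y)$, which is essentially $\Tr_{K/\Q}(\alpha^{-1}\trd(q\cdot))$, has absolute value at most $2$ for every twist $yu$, $u\in\cI^1$: two distinct Leech minimal vectors have inner product in $\{0,\pm1,\pm2\}$ after scaling, with $\pm4$ excluded.

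I would then enumerate totally nonnegative $n=m+k\tau$ with $\sigma_i(n)<4$. Excluding $\sigma_i(n)=4$, the candidates are
\[
 0,\ 1,\ \tau^{\pm2},\ 2,\ \tau^{\pm1},\ 2\pm\tau\ldots,\ 3,\ \ldots
\]
This is a short finite list.

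The main step is to discard the candidates other than the five. For this I would work modulo~$2$. By \eqref{eq:reduction-mod2}, $\bar q\in M_2(\F_4)$ and $n\bmod 2=\det\bar q\in\F_4$. I would pick $u\in\cI^1$ to make $\trd(qu)$ as large as possible; the $120$ unit twists form the $H_4$ root system, so $\max_u\trd(qu)$ is controlled by $\nrd(q)$. For a non-listed value such as $n=3$, $\tau$, or $2+\tau$, I expect some twist $u$ to give $h(x-yu,x-yu)$ totally positive with trace index $\le1$, hence equal to $1$ or $\tau^2$, which contradicts extremality. If that direct route leaves a residual case, the fallback is to compare the parity of $\det\bar q$ with the reduction of the norm-$2$ vectors modulo $2L$. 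The main obstacle is this case-by-case unit-twisting estimate, which needs the geometry of the $600$-cell.
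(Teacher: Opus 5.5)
\textbf{There is a genuine gap.} Your Cauchy--Schwarz reduction is fine in outline, but the candidate list is off. The elements $\tau^{\pm1}$ are not totally positive, so ``$n=\tau$'' is not a case. From $|k|\sqrt5=|\sigma_1(n)-\sigma_2(n)|<4$ one gets exactly
\[
 n\in\{0,1,2,3,\tau^2,\tau^{-2},2+\tau,3-\tau\}.
\]

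The decisive step is excluding $n\in\{3,\,2+\tau,\,3-\tau\}$, and you leave it as an expectation backed by a $600$-cell heuristic. A covering-radius estimate at one real place does not pin down an exact value of $\trd(qu)\in\cO_K$; you would need simultaneous control at both places.

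For $n=2+\tau$ the mechanism you describe cannot work at all. For any real quaternion $u$, at each place
\[
 h(x-yu,\,x-yu)\ge 2-\tfrac{|q|^2}{2}.
\]
At $\sigma_2$ this bound is $(4-\sigma_2(n))/2=\tau^2/2>1$. The norms $1$ and $\tau^2$ have $\sigma_2$-values $1$ and $\tau^{-2}$, both below the bound, so no vector $x-yu$ has trace index $\le1$. The small vector that actually exists has Hermitian norm $4-n=\tau^{-2}$, which has trace index $2$. It is therefore invisible to the Leech minimum and to your reformulation via inner products in $\{0,\pm1,\pm2\}$. It is excluded only by the central-unit argument $L_{\tau^{-2}}=\tau^{-1}L_1=\varnothing$.

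The missing idea is the paper's mod-$2$ construction, which is where your vague ``parity of $\det\overline{q}$'' fallback has to go.
\begin{itemize}
\item Since $n\equiv4-n\pmod{2\cO_K}$, write $\varepsilon=4-n=c^2$ with $c=1,\tau^{-1},\tau$ in the three cases. Then the reduction of $c^{-1}q$ in $\cI/2\cI\cong M_2(\F_4)$ has determinant $1$.
\item Surjectivity of $\cI^1\to\SL_2(4)$ gives $u\in\cI^1$ with $q\equiv cu\pmod{2\cI}$.
\item Put $r=(q-cu)/2\in\cI$ and $z=y-xr\in L$. Then
\[
 2h(z,\,z)=4-2\trd(\overline{r}\,q)+4\nrd(r)=4-n+\nrd(q-2r)=\varepsilon+\nrd(cu)=2\varepsilon .
\]
\item Hence $z$ has norm $1$, $\tau^{-2}$, or $\tau^2$, contradicting extremality; in the middle case, multiply by $\tau$ first.
\end{itemize}
The coefficient $r$ is a mod-$2$ correction and is generally not a unit. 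So this is not a search over the $120$ twists $yu$, and no $600$-cell geometry is needed.
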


\begin{proof}
Choose representatives~$x$ and~$y$ in~$L_2$ and put
\[
 q=h(x,\,y)\in\cI,
 \qquad
 a=\nrd(q)\in\cO_K.
\]
Here~$q\in\cI$ because $L=L^\#$.  At each real place, Cauchy--Schwarz gives
\[
 0\le a\le4,
 \qquad
 0\le a'\le4.
\]
Writing $a=m+n\tau$ and using $a-a'=n\sqrt5$ gives~$|n|\le1$, and hence
\begin{equation}\label{eq:bounded-icosian-norms}
 a\in
 \{0,\,1,\,2,\,3,\,4,\,\tau^2,\,\tau^{-2},\,2+\tau,\,3-\tau\}.
\end{equation}
The value $a=4$ can occur only when $[x]=[y]$.  Indeed, put
\[
 \lambda=h(x,\,x)^{-1}h(x,\,y)
 \qquad\text{and}\qquad
 z=y-x\lambda.
\]
Then $h(z,\,z)=2-a/2=0$, so definiteness gives $z=0$ and $y=x\lambda$.

It remains to exclude
\[
 a\in\{3,\,2+\tau,\,3-\tau\}.
\]
Put $\varepsilon=4-a$.  In these three cases choose respectively
\[
 c=1,\qquad c=\tau^{-1},\qquad c=\tau;
\]
then $c^2=\varepsilon\in\{1,\,\tau^{-2},\,\tau^2\}$.  Under~$\cI/2\cI\cong M_2(\F_4)$, reduced norm becomes determinant.  Since $a\equiv4-a=\varepsilon=c^2\pmod{2\cO_K}$, the reduction of~$c^{-1}q$ has determinant~$1$.  The surjectivity in \eqref{eq:reduction-mod2} therefore gives~$u\in\cI^1$ with
\[
 u\equiv c^{-1}q\pmod{2\cI}.
\]
Write $q-cu=2r$ with~$r\in\cI$ and set $z=y-xr\in L$.  A direct calculation gives
\begin{align*}
 2h(z,\,z)
 &=4-2\trd(\overline rq)+4\nrd(r)\\
 &=4-a+\nrd(q-2r)
 =\varepsilon+\nrd(cu)
 =2\varepsilon.
\end{align*}
Thus $h(z,\,z)=\varepsilon$, and~$z\ne0$ because~$\varepsilon$ is totally positive.  This contradicts extremality when $\varepsilon=1$ or~$\tau^2$; when $\varepsilon=\tau^{-2}$, multiplication by~$\tau$ gives a vector of norm~$1$.  The three unwanted values are impossible.  Dividing the remaining reduced norms by $h(x,\,x)h(y,\,y)=4$ proves \eqref{eq:five-angle-set}.
\end{proof}

\begin{lemma}[valencies]\label{lem:norm2-multiplicity-free}
For every~$x\in X_L$, the five projective inner products
to the other points have multiplicities
\begin{equation}\label{eq:angle-multiplicity-table}
\begin{array}{c|ccccc}
 t(x,\,y)&0&\frac12&\frac14&g_+&g_-\\ \hline
 \#\{y:t(x,\,y)=\cdot\}&10&80&160&32&32.
\end{array}
\end{equation}
\end{lemma}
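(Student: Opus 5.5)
The plan is to read off the valencies directly from the projective design property, without any group-theoretic input. Fix $x\in X_L$ and work at the real place $\sigma_1$. By \cref{lem:primitive-shell-lines}, $X_L$ is a set of $315$ distinct points, so $x$ is the only point with $t(x,y)=1$. By \cref{prop:extremal-shell-angles}, every other point $y$ has $t(x,y)\in\{0,\frac12,\frac14,g_+,g_-\}$. Write $n_0,n_{1/2},n_{1/4},n_+,n_-$ for the five multiplicities; these are the unknowns. (The golden values are read at $\sigma_1$. At $\sigma_2$ the values $g_\pm$ are exchanged, which is harmless.)

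Next, apply \cref{thm:rank3-venkov}: $X_L$ is a quaternionic projective $5$-design. The function $y\mapsto t(x,y)^k$ is a polynomial of degree $k$ in the entries of $p_y$. Averaging it over $X_L$ therefore gives the Haar moment \eqref{eq:haar-moments}, which yields
\[
 1+\sum_{t}n_t\,t^k=315\,\frac{(2)_k}{(6)_k}\qquad(0\le k\le5).
\]
The right-hand sides for $k=0,1,2,3,4$ are $315$, $105$, $45$, $45/2$ and $25/2$. The first five equations form a linear system in the five unknowns $n_t$. Its coefficient matrix is the Vandermonde matrix of the five distinct numbers $0,\frac12,\frac14,g_+,g_-$, so it is nonsingular and the multiplicities are uniquely determined.

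It then remains only to check that the table \eqref{eq:angle-multiplicity-table} solves the system. For instance, $k=0$ gives $10+80+160+32+32=314$. For $k=1$ one gets $40+40+32(g_++g_-)=80+24=104=105-1$. The golden pair enters only through the symmetric functions $g_++g_-=\frac34$ and $g_+g_-=\frac1{16}$, so every power sum $g_+^k+g_-^k$ is rational and each check is a short rational computation. As a consistency test, the $k=5$ equation is also satisfied, though it is not needed for uniqueness. Because $n_+$ and $n_-$ enter with the same coefficient at $\sigma_1$ and $\sigma_2$ up to exchange, the equality $n_+=n_-$ agrees with the Galois symmetry.

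I do not expect a genuine obstacle. The only delicate point is the bookkeeping that makes the moment identities exact. This requires that $X_L$ be a set rather than a multiset, which is \cref{lem:primitive-shell-lines}, so that the term for $x$ itself contributes exactly $1$. It also requires that the angle list of \cref{prop:extremal-shell-angles} be exhaustive. Both facts are already established, so the proof reduces to the nonsingular Vandermonde solve.
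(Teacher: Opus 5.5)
Your proposal is correct and follows the paper's proof essentially verbatim. Both arguments solve the nonsingular Vandermonde system formed by the cardinality equation ($\sum n_t=314$, since by \cref{lem:primitive-shell-lines} there are no repeated points) and the moment equations for $1\le k\le4$ coming from \cref{thm:rank3-venkov}, and both then check that the displayed table is its solution.
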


\begin{proof}
Let~$n_0$, $n_{1/2}$, $n_{1/4}$, $n_+$, and~$n_-$ be the five multiplicities.
There are no repeated projective points by
\cref{lem:primitive-shell-lines}, so their sum is~$314$.
By \cref{thm:rank3-venkov}, for~$1\le k\le4$,
\begin{equation}\label{eq:one-point-valency-moments}
 1+n_{1/2}2^{-k}+n_{1/4}4^{-k}+n_+g_+^k+n_-g_-^k
 =315\frac{(2)_k}{(6)_k}.
\end{equation}
Together with the cardinality equation, these form the Vandermonde
system on the five distinct angle values.  Substitution verifies the
displayed multiplicities, which are therefore its unique solution.
\end{proof}

\begin{lemma}[Hoggar's bound and its equality case]\label{lem:hoggar-bound-equality}
A subset $Y\subset\Hh\PP^2$ whose projective inner products between
distinct points belong to \eqref{eq:five-angle-set} has at most~$315$
points.  If $|Y|=315$, then~$Y$ is a quaternionic projective~$5$-design.
\end{lemma}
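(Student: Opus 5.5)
The plan is the Delsarte linear-programming bound in the quaternionic projective space $\Hh\PP^2$, using the zonal polynomials $Q_r$ from \eqref{eq:projective-zonal-polynomials}.

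First I would expand the annihilator $F$ of \eqref{eq:hoggar-polynomial-intro} in this basis:
\[
 F=\sum_{r=0}^5 f_rQ_r .
\]
The key facts to verify are $f_r\ge0$ for all $r$, $f_0>0$, and $F(t)=0$ at every $t\in\{0,\frac12,\frac14,g_+,g_-\}$. The vanishing is immediate from the factorization, since $16t^2-12t+1$ has roots $g_\pm$. The constant $f_0$ is the Haar average $\int F(t(x,y))\,d\mu(y)$. Expanding $F$ in powers of $t$ and applying \eqref{eq:haar-moments} computes it, and I expect $F(1)/f_0=315$. Here $F(1)=\frac1{15}\cdot1\cdot3\cdot5=1$, so $f_0$ should equal $1/315$. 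The remaining coefficients $f_r$ come from the Jacobi orthogonality on $[0,1]$ with weight $t(1-t)^3$.

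Next, positivity of the zonal kernels $Q_r$ gives, for any finite $Y$,
\[
 \sum_{x,y\in Y}Q_r(t(x,y))\ge0 ,
\]
with equality for $r\ge1$ exactly when the harmonic sums of degree $r$ vanish. This holds because $Q_r(t(x,y))$ is a positive multiple of $\sum_i\widehat P_i(x)\widehat P_i(y)$ over an orthonormal basis of $\cH_r$. Now evaluate the double sum of $F$ in two ways. The off-diagonal terms vanish by the angle hypothesis, which gives $|Y|F(1)$. Expanding $F$ in the $Q_r$ and dropping the nonnegative terms with $r\ge1$ gives at least $f_0|Y|^2$. Hence
\[
 |Y|\le F(1)/f_0=315 .
\]

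In the equality case every dropped term must vanish. Wherever $f_r>0$, this forces $\sum_{x,y}Q_r=0$, that is, vanishing of all harmonic sums for $\cH_r$. If $f_r>0$ for all $1\le r\le5$, then $Y$ is a projective $5$-design by \cref{def:projective-design}.

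The main obstacle is the explicit computation of the $f_r$. The proof needs them strictly positive for every $1\le r\le5$, not merely nonnegative: a vanishing coefficient would lose that degree of the design property. I would compute them by integrating $F\cdot P_r^{(3,1)}(2t-1)$ against the beta$(2,4)$ density, using the moments \eqref{eq:haar-moments}. This is routine but error-prone with the irrational factor $16t^2-12t+1$; note, however, that the product $F$ has rational coefficients. If some coefficient came out zero, I would recover the missing degree afterwards, as in \cref{lem:norm2-multiplicity-free}. That route derives the valencies from the available moments and then checks the remaining moment directly.
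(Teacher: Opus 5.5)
Your proposal is the paper's proof: the paper runs exactly this Delsarte argument with the explicit expansion
\[
 F=\tfrac1{315}Q_0+\tfrac{13}{495}Q_1+\tfrac{113}{1155}Q_2+\tfrac{16}{65}Q_3+\tfrac{64}{165}Q_4+\tfrac{512}{2145}Q_5,
\]
so every $f_r$ is strictly positive (in particular $f_0=1/315$, as you predicted), and your fallback for a vanishing coefficient is never needed. The one point you omit is that your inequality is derived only for finite $Y$; the paper excludes infinite $Y$ by applying the finite bound to any $316$ of its points.
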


\begin{proof}
The zonal polynomials \eqref{eq:projective-zonal-polynomials} have
the beta weight in \eqref{eq:haar-moments}.
Hoggar's annihilator \eqref{eq:hoggar-polynomial-intro} has the expansion
\begin{align}
 F(t)={}&\frac1{315}Q_0(t)+\frac{13}{495}Q_1(t)
        +\frac{113}{1155}Q_2(t)\notag\\
       &+\frac{16}{65}Q_3(t)+\frac{64}{165}Q_4(t)
        +\frac{512}{2145}Q_5(t).
 \label{eq:hoggar-positive-expansion}
\end{align}
All six coefficients~$f_r$ are strictly positive.  For a finite set~$Y$ of size~$N$, the addition formula for projective harmonics gives
\[
 S_r(Y):=\sum_{x,\,y\in Y}Q_r(t(x,\,y))\ge0,
\]
and $S_r(Y)=0$ if and only if $\sum_{y\in Y}\widehat P(y)=0$ for every
$P\in\cH_r$; see \cite{HoggarDesigns}.  Since $F(1)=1$ and~$F$ vanishes at the
five allowed off-diagonal values,
\[
 N=\sum_{x,\,y\in Y}F(t(x,\,y))
  =\frac{N^2}{315}+\sum_{r=1}^5 f_r S_r(Y)
  \ge\frac{N^2}{315}.
\]
This proves~$N\le315$.  At equality all~$S_r(Y)$, $1\le r\le5$,
vanish, which is precisely the projective~$5$-design condition.
An infinite set is also excluded by applying the finite bound to any~$316$ of its points.
\end{proof}

\begin{corollary}[harmonic strength of the three shells]\label{cor:trace-shell-harmonic-strength}
At either real place, the three trace-minimal projective shells of an
extremal~$L\in\cG_3$ are exactly~$5$-designs and also annihilate~$\cH_7$.
\end{corollary}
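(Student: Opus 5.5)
The plan is to compute everything for the norm-$2$ shell from its now-known angle distribution, and then transfer the result to the other two shells through the Leech harmonic vanishings. For $X_L=L_2/\cI^1$, the addition formula in the proof of \cref{lem:hoggar-bound-equality} gives, for every $r\ge1$,
\[
 S_r(X_L)=315\Bigl(1+10\,Q_r(0)+80\,Q_r(\tfrac12)+160\,Q_r(\tfrac14)+32\,Q_r(g_+)+32\,Q_r(g_-)\Bigr),
\]
using the valencies of \cref{lem:norm2-multiplicity-free}. Moreover, $S_r(X_L)=0$ exactly when $X_L$ annihilates $\cH_r$. I would evaluate the Jacobi polynomials $P_6^{(3,1)}$ and $P_7^{(3,1)}$ at the five points $2t-1$. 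The expectation is $S_6(X_L)\neq0$ and $S_7(X_L)=0$; this is a finite rational computation in $\Q(\sqrt5)$, in which the $g_\pm$ contributions combine into rationals. Hence $X_L$ is not a $6$-design, so its strength is exactly $5$ by \cref{thm:rank3-venkov}, and it annihilates $\cH_7$. The same computation applies at $\sigma_2$, because the angle table is Galois-stable: $g_+$ and $g_-$ are exchanged and have equal multiplicities. Since multiplication by $\tau$ identifies $L_{2\tau^2}/\cI^1$ with $X_L$ projectively at each place, the third shell inherits both conclusions. Equivalently, $A_6\ne0$ for some $P\in\cH_6$, $A_7=0$ for all $P\in\cH_7$, and $C_r=\tau^{2r}A_r$.

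The middle shell $L_{2+\tau}$ is the main obstacle, since its angle distribution has not been determined. I would handle it with the two-place separation used for \cref{thm:rank3-venkov}. For $P\in\cH_7$, the polynomial $P(X)$ is harmonic of degree $14$ on $W_1\oplus W_2$. The degree-$14$ case of \cref{lem:leech-harmonic-vanishing} then gives $A_7+B_7+C_7=0$, and with $A_7=C_7=0$ this forces $B_7=0$. For $P\in\cH_6$ the plain sum has degree $12$, which the Leech shell need not annihilate. Instead I would use $H_6=P(X)(\varrho_2-\beta_6\varrho_1)$ with $\beta_6=3/(6+3)=1/3$. This is harmonic by the Euler identities already displayed ($8(r+3)\beta_r=24$) and has degree $14$, so again it is killed by the Leech minimal shell. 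Using \eqref{eq:three-radius-table}, this yields
\[
 (r_+-\tfrac13 r_-)A_6+\tfrac23B_6+(r_--\tfrac13 r_+)\tau^{12}A_6=0.
\]

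It remains to check that the scalar $(r_+-\tfrac13r_-)+(r_--\tfrac13r_+)\tau^{12}$ is nonzero. That is immediate: $\tau^{12}$ is a large irrational, and $r_--\tfrac13r_+=\tfrac23-\tfrac4{3\sqrt5}>0$ while $r_+-\tfrac13r_->0$. Choosing $P$ with $A_6\neq0$ then gives $B_6\neq0$, so the middle shell is not a $6$-design; by \cref{lem:coefficient-design} it is therefore exactly a $5$-design and annihilates $\cH_7$. Finally, the second real place is handled by the semilinear automorphism $\gamma$ of \eqref{eq:icosian-galois-map}, or directly by exchanging the roles of $W_1$ and $W_2$ in the same argument. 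I expect the only genuinely laborious step to be the verification $S_6\ne0$, $S_7=0$ from the Jacobi polynomial values.
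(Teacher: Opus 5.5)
Your proposal is correct and follows the paper's proof step for step. You compute $S_6,S_7$ for $X_L$ from the valencies, use the plain degree-$14$ harmonic for $r=7$ and the separator $P(X)(\varrho_2-\varrho_1/3)$ for $r=6$ to reach the middle shell, use multiplication by $\tau$ for the third shell, and use Galois conjugation for the other real place. The Jacobi computation you deferred does come out as you expect, giving $S_6(X_L)/315=195/64$ and $S_7(X_L)=0$, and your scalar identity is the paper's $B_6=-(18+8\sqrt5)A_6$.
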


\begin{proof}
For~$X_L$, the valencies give
\[
 \frac{S_r(X_L)}{315}
 =1+10Q_r(0)+80Q_r(1/2)+160Q_r(1/4)
   +32Q_r(g_+)+32Q_r(g_-).
\]
Substitution into \eqref{eq:projective-zonal-polynomials} gives $S_6(X_L)/315=195/64>0$ and $S_7(X_L)=0$.
By the addition formula used in \cref{lem:hoggar-bound-equality}, $X_L$ annihilates~$\cH_7$ but not~$\cH_6$.

Use the sums~$A_r$, $B_r$, and~$C_r$ of \eqref{eq:ABC-harmonic-sums}.
For $r=7$, the polynomial~$P(X)$ has ordinary harmonic degree~$14$;
hence \cref{lem:leech-harmonic-vanishing} gives $A_7+B_7+C_7=0$.  The preceding vanishing and $C_7=\tau^{14}A_7$ therefore give $B_7=0$.
For $r=6$, the polynomial $P(X)(\varrho_2-\varrho_1/3)$ is harmonic of degree~$14$, since $\Delta(P\varrho_1)=72P$ and $\Delta(P\varrho_2)=24P$.
Its shell sum gives
\[
 B_6=-\frac32\bigl[(r_+-r_-/3)
       +\tau^{12}(r_--r_+/3)\bigr]A_6
     =-(18+8\sqrt5)A_6.
\]
Some~$A_6$ is nonzero, so the middle shell also fails to be a~$6$-design.  The third shell is projectively the first.
At the other real place the valencies are unchanged and the
nonzero scalar is conjugated.  Together with
\cref{thm:rank3-venkov}, this proves the assertion.
\end{proof}

For~$X_L$, arithmetic primitivity, the shell count, and the five-angle
restriction also allow Hoggar equality to recover the~$5$-design
property.  The Venkov argument above explains it simultaneously for
all three shells.

For the remainder of the combinatorial argument, let $Y\subset\Hh\PP^2$
be any set of~$315$ points whose projective inner products between
distinct points belong to \eqref{eq:five-angle-set}.  It is a
quaternionic projective~$5$-design by
\cref{lem:hoggar-bound-equality}.  The set~$X_L$ satisfies
these hypotheses for every extremal~$L$.

\subsection{Mixed moments and the five-class association scheme}

Put
\begin{equation}\label{eq:angle-nodes}
 \rho_1=0,
 \qquad \rho_2=\frac12,
 \qquad \rho_3=\frac14,
 \qquad \rho_4=g_+,
 \qquad \rho_5=g_-.
\end{equation}
For distinct~$x$, $y\in Y$ with $t(x,\,y)=\rho_k$, define
\begin{equation}\label{eq:angle-intersection-numbers}
 p_{ij}(x,\,y)
 =\#\{z\in Y:t(x,\,z)=\rho_i,\,t(y,\,z)=\rho_j\},
 \qquad 1\le i,\,j\le5.
\end{equation}

\begin{lemma}[two-point Haar moments]\label{lem:quaternionic-two-point-moments}
Let~$x$ and~$y$ be points of~$\Hh\PP^2$ and put $s=t(x,\,y)$.  For nonnegative integers~$a$ and~$b$, one has
\begin{equation}\label{eq:two-point-haar-coefficient}
 \mu_{ab}(s)
 :=\int_{\Hh\PP^2}t(x,\,z)^a t(y,\,z)^b\,d\mu(z)
 =\frac{a!b!}{(6)_{a+b}}
 [U^aV^b]\bigl(1-U-V+(1-s)UV\bigr)^{-2}.
\end{equation}
Consequently, if~$a+b\le5$, then
\begin{equation}\label{eq:mixed-design-moments}
 \sum_{z\in Y}t(x,\,z)^a t(y,\,z)^b
 =315\,\mu_{ab}(s).
\end{equation}
\end{lemma}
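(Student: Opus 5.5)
Our plan is to compute the two-point integral by averaging over a Gaussian vector and then reducing to a determinant. By $\Sp(3)$-invariance we may fix $x=[e_1]$ and $y=[e_1c+e_2s']$ with $c=\sqrt s$ and $s'=\sqrt{1-s}$. Let $z$ be a standard Gaussian vector in $\Hh^3\cong\R^{12}$. The radial part $\lVert z\rVert^2$ is independent of $[z]$, which is $\mu$-distributed. This gives
\[
 \mathbb E\bigl[|h(x,z)|^{2a}|h(y,z)|^{2b}\bigr]
 =\mathbb E\bigl[\lVert z\rVert^{2(a+b)}\bigr]\,\mu_{ab}(s),
\]
with unit representatives for $x$ and $y$. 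With the Gaussian normalized so that $\lVert z\rVert^2$ is a sum of $12$ squares of variance $1/2$, $\lVert z\rVert^2$ is a Gamma variable of shape $6$, so $\mathbb E\lVert z\rVert^{2m}=(6)_m$. The left side depends only on the Gaussian pair $(u,v)=(h(x,z),h(y,z))\in\Hh^2$. This pair has Gram matrix $G=\begin{pmatrix}1&\bar q\\ q&1\end{pmatrix}$ with $|q|^2=s$, since $h(x,y)=q$.

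Next we compute the joint generating function
\[
 \mathbb E\,e^{U|u|^2+V|v|^2}.
\]
We pass to the real $8$-dimensional picture: $(u,v)$ is a real Gaussian vector in $\R^8$ whose covariance is $\tfrac12$ times the real form of $G$. The quadratic form $U|u|^2+V|v|^2$ corresponds to $\operatorname{diag}(U,V)\otimes I_4$. The generating function is therefore
\[
 \det\bigl(I-\operatorname{diag}(U,V)G\bigr)_\R^{-1/2}.
\]
The real determinant of a quaternionic $2\times2$ matrix is the square of its Study determinant, and the Study determinant of $I-\operatorname{diag}(U,V)G$ is the square of $1-U-V+(1-s)UV$. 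Here we use that $G$ has real diagonal and that $|q|^2=s$ is the only invariant. Hence
\[
 \mathbb E\,e^{U|u|^2+V|v|^2}=\bigl(1-U-V+(1-s)UV\bigr)^{-2}.
\]
Extracting the coefficient of $U^aV^b$ gives $\mathbb E|u|^{2a}|v|^{2b}/(a!\,b!)$. Dividing by $(6)_{a+b}$ then yields \eqref{eq:two-point-haar-coefficient}. As a sanity check, $b=0$ gives $(2)_a/(6)_a$, which matches \eqref{eq:haar-moments}.

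The main obstacle is the determinant step. If the Study-determinant shortcut is unclear, we will instead rotate by right multiplication by $\Sp(1)$ to make $q=\sqrt s$ real. Then $u=z_1$ and $v=\sqrt s\,z_1+\sqrt{1-s}\,z_2$, the problem reduces to four copies of a real $2\times2$ Gaussian computation, and the determinant
\[
 (1-U)(1-V)-sUV=1-U-V+(1-s)UV
\]
appears directly, raised to the power $-4/2=-2$.

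For \eqref{eq:mixed-design-moments}, note that $t(x,z)^at(y,z)^b$ is a polynomial of degree $a+b\le5$ in the real entries of $p_z$. Indeed, $t(x,z)=T(p_x,p_z)$ is linear in $p_z$. By \cref{lem:hoggar-bound-equality}, $Y$ is a projective $5$-design, so by \cref{def:projective-design} its average equals the $\mu$-integral, giving $315\,\mu_{ab}(s)$.
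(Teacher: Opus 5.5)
Your proposal is correct and follows essentially the paper's route. You use a Gaussian moment generating function whose determinant splits into four identical real $2\times2$ blocks, giving $\bigl(1-U-V+(1-s)UV\bigr)^{-2}$, together with independence of the radial part and $\mathbb E\lVert z\rVert^{2m}=(6)_m$; your variance-$\tfrac12$ normalization absorbs the paper's factor $2^d$. You then obtain the sum over $Y$ from the $5$-design property in \cref{lem:hoggar-bound-equality}, exactly as the paper does. Your fallback of making $h(x,y)$ real is precisely the paper's block decomposition, and the Study-determinant version repackages the same computation (the placement of $q$ versus $\bar q$ in $G$ is immaterial, since only $|q|^2$ enters).
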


\begin{proof}
Choose unit representatives so that~$x$ is the first quaternionic coordinate line and~$y$ is represented by
\[
 \sqrt{s}\,e_1+\sqrt{1-s}\,e_2.
\]
On the underlying real~$12$-space, let~$A$ and~$B$ be the rank-$4$ orthogonal projectors onto these two quaternionic lines, and let~$G$ be a standard real Gaussian vector.  The Gaussian quadratic-form identity gives
\[
 \mathbb E\exp\!\left(\frac U2G^T A G+\frac V2G^T B G\right)
 =\det(I-UA-VB)^{-1/2}.
\]
The pair~$(A,\,B)$ decomposes into four identical real two-dimensional blocks.  On each block the determinant is
\[
 1-U-V+(1-s)UV,
\]
and the remaining real four-space contributes~$1$.  Hence the generating function is the inverse square occurring in \eqref{eq:two-point-haar-coefficient}.  Writing $G=RZ$, with~$Z$ uniform on the unit sphere and~$R$ independent, and using
\[
 \mathbb E(R^{2d})=2^d(6)_d,
\]
gives \eqref{eq:two-point-haar-coefficient}.  The function of~$z$ in \eqref{eq:mixed-design-moments} has projective degree~$a+b$, so the projective~$5$-design property gives the final assertion.
\end{proof}

\begin{lemma}[the rational moment kernel]\label{lem:rational-moment-kernel}
Let $Q=(q_{ij})\in M_5(\Q)$ and suppose
\begin{equation}\label{eq:homogeneous-mixed-moments}
 \sum_{i,\,j=1}^5q_{ij}\rho_i^a\rho_j^b=0
 \qquad(a,\,b\ge0,\,a+b\le5).
\end{equation}
Then
\begin{equation}\label{eq:moment-kernel-matrix}
 Q=\lambda\Omega,
 \qquad
 \Omega=uu^T,
 \qquad
 u=\begin{pmatrix}1&2&-1&-1&-1\end{pmatrix}^{T},
\end{equation}
for some~$\lambda\in\Q$.
\end{lemma}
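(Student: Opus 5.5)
The plan is to reduce the statement to finite linear algebra over~$\Q$. I would use a Vandermonde change of basis over~$\R$ to find the real solution space, and then use the Galois symmetry of the nodes to cut it down to the rational line. Throughout, write $m_a=(\rho_1^a,\dots,\rho_5^a)^T\in K^5$ with the convention $0^0=1$. Then \eqref{eq:homogeneous-mixed-moments} reads $m_a^TQm_b=0$ for $a+b\le5$.

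\emph{Existence.} I would first check that $\Omega=uu^T$ is a solution. Since $m_a^T\Omega m_b=(u^Tm_a)(u^Tm_b)$, and every pair with $a+b\le5$ has $\min(a,b)\le2$, it suffices to show $u^Tm_a=0$ for $a=0,1,2$. These are short checks using $g_++g_-=\tfrac34$ and $g_+g_-=\tfrac1{16}$:
\begin{align*}
 u^Tm_0&=1+2-1-1-1=0,\\
 u^Tm_1&=1-\tfrac14-\tfrac34=0,\\
 u^Tm_2&=\tfrac12-\tfrac1{16}-\tfrac7{16}=0.
\end{align*}

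\emph{Real solution space.} The five nodes are distinct, so $V=(m_0\,|\,\cdots\,|\,m_4)$ is an invertible Vandermonde matrix. Put $W=V^TQV$; then $W_{ab}=m_a^TQm_b$ for $0\le a,b\le4$. The annihilator $F$ in \eqref{eq:hoggar-polynomial-intro} vanishes at all five nodes and has degree~$5$. Hence $m_5$ is a combination of $m_0,\dots,m_4$, and the conditions involving $m_5$ reduce to linear combinations of the $W_{0b}$ with $b\le4$, which are already forced to vanish. The real solution space is therefore exactly the set of $W$ with $W_{ab}=0$ for $a+b\le5$. That space is $6$-dimensional, spanned by the entries $(2,4),(3,3),(4,2),(3,4),(4,3),(4,4)$.

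\emph{Rationality cut.} The nontrivial Galois automorphism fixes $\rho_1,\rho_2,\rho_3$ and swaps $\rho_4\leftrightarrow\rho_5$. Let $P$ be the permutation matrix of the transposition $(4\,5)$. Applying the automorphism to the equations shows that a rational solution $Q$ also satisfies $m_a^T(PQP)m_b=0$. So $Q$ lies in $S\cap PSP$, where $S$ is the $6$-dimensional real solution space. Equivalently, each equation $m_a^TQm_b=0$ splits into its rational part and its $\sqrt5$-part, giving $42$ rational linear equations in the $25$ unknowns~$q_{ij}$. The claim is that this system has rank~$24$. I would verify this by computing $S\cap PSP$ directly in the $W$-coordinates. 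The map $Q\mapsto PQP$ becomes conjugation by $V^{-1}PV$ on $W$, and I would show that the intersection is one-dimensional; by the existence step it then equals $\Q\Omega$.

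The main obstacle is this last step. The six-dimensional space $S$ must meet its Galois conjugate only in $\R\,\Omega$. I do not see a conceptual reason for this beyond explicit computation. The first thing I would try is to note that $\Omega$ is Galois-fixed and that $u$ spans the rational vectors orthogonal to $m_0,m_1,m_2$. Any rational symmetric part of a solution must pair to zero with the low moment vectors, and I would then show that the antisymmetric and higher-degree components are excluded by the $\sqrt5$-parts of the equations at $a+b=5$. Failing a clean argument, I would fall back on the explicit $42\times25$ rank computation, which is finite and mechanical.
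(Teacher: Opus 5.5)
Your reduction is correct and matches the paper's setup. Your six free $W$-entries describe the same six-dimensional space that the paper obtains as bilinear forms on $K^5/\Span\{m_0,m_1\}$, cut out by the $(2,2)$, $(2,3)$, $(3,2)$ conditions. A rational solution does lie in $S\cap PSP$. (Incidentally, $Q\mapsto PQP$ acts on $W$ by the congruence $W\mapsto R^TWR$ with $R=V^{-1}PV$, not by conjugation.) However, the claim that $S\cap PSP$ is a line is the entire content of the lemma. You defer it to a $42\times25$ rank computation that you do not carry out, and you say you see no reason for it. The computation would in fact confirm the claim, but as written nothing establishes rank~$24$, so this is a genuine gap.

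The missing idea is to Galois-conjugate the low-degree conditions.
\begin{itemize}
\item The equations with $b=1$ and with $a=1$ give $Qm_1=0$ and $m_1^TQ=0$. Since $Q$ is rational, entrywise conjugation gives $Qm_1'=0$ and $(m_1')^TQ=0$, where $m_1'=Pm_1$.
\item The vector $m_1'$ is not in $\Span\{m_0,m_1,m_2,m_3\}$, because $\det[m_0,m_1,m_1',m_2,m_3]=-3\sqrt5/8192$.
\item Hence $Q$ descends to a bilinear form on the two-dimensional quotient by $\Span\{m_0,m_1,m_1'\}$, with basis the images of $m_2$ and $m_3$. The conditions $(2,2)$, $(2,3)$, $(3,2)$ kill every entry except the $(m_3,m_3)$ one.
\item Thus $Q$ annihilates the four-dimensional space $\Span\{m_0,m_1,m_1',m_2\}$ on both sides. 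Your existence check, together with rationality of $u$, shows that $u$ spans the orthogonal complement of this space. So $Q=\lambda uu^T$ with $\lambda=q_{11}\in\Q$.
\end{itemize}
In your coordinates this reads as follows. With $c=V^{-1}m_1'$ one has $c_4\ne0$, and $Wc=0=c^TW$ forces $W_{24}=W_{42}=0$, $W_{34}=W_{43}=-c_3W_{33}/c_4$ and $W_{44}=c_3^2W_{33}/c_4^2$. Only $W_{33}$ remains free.

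This is the paper's argument. The decisive input is the conjugate of the whole family $Qm_1=0$, $m_1^TQ=0$; no symmetric/antisymmetric splitting is needed.
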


\begin{proof}
Put $v_a=(\rho_i^a)_{i=1}^5\in K^5$ for~$a\ge0$,
with $v_0=(1,\,1,\,1,\,1,\,1)^T$.  The vectors~$v_0$, $\ldots$, $v_4$ form a
Vandermonde basis.  The equations with one exponent equal to~$0$ or~$1$
therefore give
\[
 Qv_0=Qv_1=0,
 \qquad v_0^TQ=v_1^TQ=0.
\]
Over~$K$, the remaining equations are the three conditions on the
pairs~$(v_2,\,v_2)$, $(v_2,\,v_3)$, and~$(v_3,\,v_2)$.  Thus without the
rationality assumption they leave a six-dimensional space of bilinear
forms on the three-dimensional quotient by~$\Span_{K}\{v_0,\,v_1\}$.

Since~$Q$ is rational, it also annihilates~$v_1'$ on both
sides, where the prime acts entrywise by the nontrivial automorphism
of~$K/\Q$.
Moreover,
\[
 \det[v_0,\,v_1,\,v_1',\,v_2,\,v_3]
 =-\frac{3\sqrt5}{8192}\ne0.
\]
Consequently~$Q$ descends to a bilinear form on a two-dimensional
quotient with basis represented by~$v_2$ and~$v_3$.  The three remaining
conditions force its first row and first column to vanish.  Hence~$Q$
annihilates
\[
 \Span_{K}\{v_0,\,v_1,\,v_1',\,v_2\}
\]
on both sides.  The common annihilator is spanned by $u=(1,\,2,\,-1,\,-1,\,-1)^T$, so $Q=\lambda uu^T$; its~$(1,\,1)$ entry shows
that~$\lambda\in\Q$.

Conversely, $u^Tv_a=0$ for~$0\le a\le2$, as follows from $g_++g_-=3/4$ and $g_+g_-=1/16$.  If~$a+b\le5$, then~$a\le2$ or~$b\le2$, so~$uu^T$ satisfies every equation in
\eqref{eq:homogeneous-mixed-moments}.
\end{proof}

\begin{theorem}[the forced five-angle association scheme]\label{thm:angle-association-scheme}
The five angle relations on~$Y$ form a symmetric five-class association scheme.  In the order \eqref{eq:angle-nodes}, the intersection matrix
\[
 P^{(k)}=(p_{ij}^{\,k})_{1\le i,\,j\le5}
\]
for a pair in relation~$\rho_k$ is as follows:
\begin{align*}
P^{(1)}&=
\begin{pmatrix}
1&8&0&0&0\\
8&8&64&0&0\\
0&64&64&16&16\\
0&0&16&0&16\\
0&0&16&16&0
\end{pmatrix},
&
P^{(2)}&=
\begin{pmatrix}
1&1&8&0&0\\
1&30&32&8&8\\
8&32&88&16&16\\
0&8&16&8&0\\
0&8&16&0&8
\end{pmatrix},\,\displaybreak[1]\\[2ex]
P^{(3)}&=
\begin{pmatrix}
0&4&4&1&1\\
4&16&44&8&8\\
4&44&77&17&17\\
1&8&17&1&5\\
1&8&17&5&1
\end{pmatrix},
&
P^{(4)}&=
\begin{pmatrix}
0&0&5&0&5\\
0&20&40&20&0\\
5&40&85&5&25\\
0&20&5&5&1\\
5&0&25&1&1
\end{pmatrix},\,\displaybreak[1]\\[2ex]
P^{(5)}&=
\begin{pmatrix}
0&0&5&5&0\\
0&20&40&0&20\\
5&40&85&25&5\\
5&0&25&1&1\\
0&20&5&1&5
\end{pmatrix}.
\end{align*}
In particular, all intersection numbers are forced by the projective~$5$-design identities and the five angle values.
\end{theorem}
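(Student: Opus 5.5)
Fix distinct $x,y\in Y$ with $t(x,y)=\rho_k$, and write $N=(n_{ij})$ for the $5\times5$ matrix of counts $n_{ij}=p_{ij}(x,y)$. The strategy is to show that the projective $5$-design identities determine $N$ up to a one-parameter ambiguity along the kernel of \cref{lem:rational-moment-kernel}, and then to pin down that parameter from the fact that the counts are nonnegative integers.

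\textbf{Step 1 (linear constraints).} Summing \eqref{eq:mixed-design-moments} over $z\in Y$ and removing the two terms $z=x$ and $z=y$ gives, for all $a+b\le5$,
\[
 \sum_{i,j}n_{ij}\rho_i^a\rho_j^b
 =315\,\mu_{ab}(\rho_k)-\rho_k^b-\rho_k^a .
\]
Here we use $t(x,x)=1$, $t(y,x)=\rho_k$, and the convention $0^0=1$. The matrix $N$ has integer entries, and $P^{(k)}$ is a rational solution of the same system once we check the displayed matrices against these moments. The difference $N-P^{(k)}$ is then rational and satisfies \eqref{eq:homogeneous-mixed-moments}, so \cref{lem:rational-moment-kernel} gives
\[
 N=P^{(k)}+\lambda\,uu^T,\qquad \lambda\in\Q .
\]
Note that $\lambda\in\Z$ already, since the $(1,1)$ entry of $uu^T$ is $1$.

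\textbf{Step 2 (fixing $\lambda$).} In each $P^{(k)}$, look at the entries where $uu^T$ has entry $\pm1$ or $\pm2$ and $P^{(k)}$ is small:
\begin{itemize}[leftmargin=*]
\item For $k=1$, the $(1,3)$ entry is $0$, and $u_1u_3=-1$ forces $\lambda\le0$. The $(4,4)$ entry is $0$, and $u_4^2=1$ forces $\lambda\ge0$.
\item For $k=2$, use the $(1,4)$ entry and the $(5,5)$ entry of $P^{(2)}$ in the same way.
\item For $k=3$, the $(1,1)$ entry is $0$, so $\lambda\ge0$. The $(1,4)$ entry is $1$ with coefficient $-1$, giving $\lambda\le1$. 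Hence I will need extra information, for instance the $(4,4)$ and $(4,5)$ entries.
\item For $k=4$ and $k=5$, the zero pairs $(1,1)$ and $(1,2)$ have coefficients $1$ and $2$; they give only $\lambda\ge0$. I will therefore pair them with $(1,4)$ or $(2,5)$ entries, where $u_1u_4=-1$ and $u_2u_5=-2$.
\end{itemize}
Where nonnegativity alone leaves $\lambda\in\{0,1\}$, I will add the exact diagonal counts. The row sums are the valencies from \cref{lem:norm2-multiplicity-free}, adjusted by the positions of $x$ and $y$, but row sums are invisible to $uu^T$ because $u^Tv_0=0$. So instead I will use the entry $n_{kk'}$ of the relation containing $x$ or $y$, or a symmetry argument: exchanging $x$ and $y$ transposes $N$, and $uu^T$ is symmetric, so this gives nothing new. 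The reliable extra input is $t(x,y)=\rho_k$ itself. For example, when $k=3$, the count of $z$ with $t(x,z)=0$ and $t(y,z)=0$ can be bounded by the geometry of $\Hh\PP^2$: two points orthogonal to both $x$ and $y$ lie on a single line orthogonal to $\Span\{x,y\}$, so $n_{11}\le 1$ whenever $x\ne y$. This gives $\lambda\le1$ from the $(1,1)$ entry in the other cases as well.

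\textbf{Main obstacle.} I expect Step 2 to be the hardest part: showing that $\lambda=0$ is the only choice keeping every entry of $P^{(k)}+\lambda uu^T$ nonnegative, together with the orthogonality bound $n_{11}\le1$ and the $n_{11}$ constraint used for $k=1,2$. This will be done by a case-by-case check over the five matrices. Once $\lambda=0$, the counts $p_{ij}(x,y)=p_{ij}^{\,k}$ do not depend on the chosen pair $(x,y)$. The relations are symmetric because $t$ is symmetric, and the valencies are constant by \cref{lem:norm2-multiplicity-free}. Together these give the association-scheme axioms.
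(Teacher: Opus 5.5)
Your Step 1 matches the paper exactly: the moment system, reduction to $N=P^{(k)}+\lambda uu^T$ via \cref{lem:rational-moment-kernel}, and $\lambda\in\Z$. Nonnegativity does settle $k=1$ and $k=4$ as you say. Two entry choices are wrong, though both are easily fixed:
\begin{itemize}
\item For $k=2$, the $(5,5)$ entry of $P^{(2)}$ is $8$, so it only gives $\lambda\ge-8$. Use instead the $(4,5)$ entry, which is $0$ with coefficient $u_4u_5=1$.
\item For $k=5$, the zero entries with negative coefficient are $(1,5)$ and $(2,4)$, not $(1,4)$ and $(2,5)$, which equal $5$ and $20$.
\end{itemize}

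\textbf{The genuine gap is $k=3$.} There $\lambda=1$ survives everything you propose:
\[
 P^{(3)}+uu^T=\begin{pmatrix}1&6&3&0&0\\6&20&42&6&6\\3&42&78&18&18\\0&6&18&2&6\\0&6&18&6&2\end{pmatrix}
\]
is entrywise nonnegative. Its $(1,1)$ entry is exactly $1$, so your (correct) bound $n_{11}\le1$ is satisfied. Only one point of $\Hh\PP^2$ is orthogonal to two distinct points, so that bound allows this value. The $(4,4)$ and $(4,5)$ entries become $2$ and $6$. Your case-by-case check would therefore stall here: no constraint on the single pair $(x,y)$ of the kind you list excludes $\lambda=1$.

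The missing idea is to bootstrap across relations, using the table $P^{(1)}$ already established for a \emph{different} pair. If $\lambda=1$, the unique point $z$ orthogonal to both $x$ and $y$ lies in $Y$. The pair $(x,z)$ is orthogonal, and $P^{(1)}$ gives $p^{\,1}_{31}=0$: no point of $Y$ has projective inner product $\tfrac14$ with $x$ and $0$ with $z$. But $y$ is such a point, a contradiction. Equivalently, $p^{\,1}_{11}=1$ puts the third frame point $w$ of $\{x,z\}$ in $Y$. The frame identity $t(y,x)+t(y,z)+t(y,w)=1$ then forces $t(y,w)=\tfrac34$, which is not an allowed value. This is how the paper closes $k=3$. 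It requires settling $k=1$ first, so the order in which you fix the five cases matters.
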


\begin{proof}
Fix~$x\ne y$ with $t(x,\,y)=s=\rho_k$.  The terms $z=x$ and $z=y$ in \eqref{eq:mixed-design-moments} contribute~$s^b$ and~$s^a$, respectively.  Hence \eqref{eq:angle-intersection-numbers} satisfies
\begin{equation}\label{eq:full-mixed-moment-system}
 \sum_{i,\,j=1}^5p_{ij}(x,\,y)\rho_i^a\rho_j^b
 =315\,\mu_{ab}(s)-s^a-s^b
 \qquad(a+b\le5).
\end{equation}
Substitution in \eqref{eq:two-point-haar-coefficient} shows that the displayed~$P^{(k)}$ is one solution.  This is a finite exact calculation in~$\Q(\sqrt5)$; the resulting matrices also satisfy the balance and associativity identities.  \Cref{lem:rational-moment-kernel} shows that every integral solution has the form
\begin{equation}\label{eq:affine-intersection-solution}
 (p_{ij}(x,\,y))=P^{(k)}+\lambda\Omega,
 \qquad \lambda\in\Z.
\end{equation}

Nonnegativity immediately gives $\lambda=0$ for $k=1$, $2$, $4$, and~$5$.  Indeed, for $k=1$ the~$(1,\,3)$ and~$(4,\,4)$ entries are~$-\lambda$ and~$\lambda$; for $k=2$ the~$(1,\,4)$ and~$(4,\,5)$ entries are~$-\lambda$ and~$\lambda$; and for $k=4$ and~$5$ the~$(1,\,1)$ entry is~$\lambda$ while respectively the~$(1,\,4)$ or~$(1,\,5)$ entry is~$-\lambda$.

For $k=3$, nonnegativity gives~$\lambda\in\{0,\,1\}$.  Suppose $\lambda=1$.  Then there is a point~$z\in Y$ orthogonal to both~$x$ and~$y$.  Applied to the orthogonal pair~$x$, $z$, however, the already determined matrix~$P^{(1)}$ has
\[
 p_{31}^{\,1}=0,
\]
so there is no point which is simultaneously in the~$1/4$-relation to~$x$ and orthogonal to~$z$.  This contradicts the existence of~$y$.  Thus $\lambda=0$ also for $k=3$.

The resulting matrices depend only on the relation of~$(x,\,y)$, which is precisely the association-scheme condition.  The angle relations themselves are symmetric because $t(x,\,y)=t(y,\,x)$.
\end{proof}

\begin{corollary}[first eigenmatrix]\label{cor:first-eigenmatrix}
Let $A_0=I$ and let~$A_i$ be the adjacency matrix of the relation~$\rho_i$
for~$1\le i\le5$.  With columns ordered by the diagonal relation followed
by the five nodes in \eqref{eq:angle-nodes}, the first eigenmatrix
$P=(P_{ci})_{0\le c,\,i\le5}$ and the multiplicities~$m_c$ of the
primitive idempotents~$E_c$ of the Bose--Mesner algebra are
\begin{equation}\label{eq:first-eigenmatrix}
\begin{array}{c|rrrrrr|r}
 &A_0&A_1&A_2&A_3&A_4&A_5&\text{mult.}\\ \hline
 E_0&1&10&80&160&32&32&1\\
 E_1&1&5&10&0&-8&-8&36\\
 E_2&1&3&-4&-8&4&4&90\\
 E_3&1&-2&-4&7&-1&-1&160\\
 E_4&1&-5&20&-20&2+6\sqrt5&2-6\sqrt5&14\\
 E_5&1&-5&20&-20&2-6\sqrt5&2+6\sqrt5&14
\end{array}
\end{equation}
The two multiplicity-$14$ eigenspaces are Galois conjugate and are the only eigenspaces which distinguish the two golden relations.  Moreover, at the chosen real embedding~$\sigma_1$, the centered Gram matrix
\[
 G^{(1)}_{xy}=4t(x,\,y)-\frac43
\]
is~$60E_4$ with the labeling above.  At the conjugate embedding~$\sigma_2$, the two golden values are exchanged and the corresponding centered Gram matrix~$G^{(2)}$ is~$60E_5$.  Thus the image at either real place is one of these~$14$-dimensional eigenspaces; this is also the dimension of the trace-zero Jordan space~$\Herm_3(\Hh)^0$ used in \cref{sec:geometric-rigidity}.
\end{corollary}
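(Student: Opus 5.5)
The plan is to compute the Bose--Mesner algebra directly from the intersection matrices of \cref{thm:angle-association-scheme}, and then to identify the centered Gram matrices with the two golden idempotents.

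\emph{Step 1: the first eigenmatrix.} I would begin with the $315\times315$ adjacency matrices $A_k$, for which $A_iA_k=\sum_j p_{ij}^{\,k}$-type relations hold. The products are more usefully written as
\[
A_kA_i=\sum_j p^{\,j}_{ki}A_j ,
\]
with the structure constants read off from the matrices $P^{(j)}$. The algebra is commutative, because the scheme is symmetric. To get the eigenvalues, I would diagonalize the $6\times6$ intersection matrix $L_1=(p^{\,j}_{1i})$ of the orthogonality relation $A_1$. Its eigenvalues are $10,5,3,-2,-5$. The value $-5$ is double, so $A_1$ alone does not separate $E_4$ from $E_5$. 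I would therefore also diagonalize $L_4$ on that two-dimensional eigenspace. There the eigenvalues are $2\pm6\sqrt5$, which are exchanged by Galois conjugation and by swapping $\rho_4$ with $\rho_5$.

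Each row of $P$ is then the common eigenvector $(P_{c0},\dots,P_{c5})$ of the $L_k$, normalized by $P_{c0}=1$. The multiplicities come from the standard formula
\[
m_c=\frac{315}{\sum_i P_{ci}^2/k_i},
\]
where $k_i=(1,10,80,160,32,32)$ are the valencies of \cref{lem:norm2-multiplicity-free}. As a check, the multiplicities should sum to $315$. The Galois-conjugacy claim can be read from the table. Rows $E_0,\dots,E_3$ do not distinguish $A_4$ from $A_5$, while rows $E_4,E_5$ are conjugate and do.

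\emph{Step 2: the centered Gram matrix.} The entry $4t(x,y)-\tfrac43$ equals $4T(p_x-\tfrac13 I,\,p_y-\tfrac13 I)$. Hence $G^{(1)}$ is the Gram matrix of the vectors $p_x-\tfrac13I$ in the $14$-dimensional space $\Herm_3(\Hh)^0$. It is therefore positive semidefinite of rank at most $14$.

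Next I would write
\[
G^{(1)}=\sum_k \bigl(4\rho_k-\tfrac43\bigr)A_k ,
\]
with $\rho_0=1$ on the diagonal, and expand it in idempotents as $\sum_c\theta_cE_c$, where $\theta_c=\sum_k(4\rho_k-\tfrac43)\,Q$-type coefficients. Concretely, $\theta_c$ is obtained from the second eigenmatrix. I would instead compute $G^{(1)}$ times each $E_c$ using the first eigenmatrix: the eigenvalue of $G^{(1)}$ on $E_c$ is
\[
\sum_k\bigl(4\rho_k-\tfrac43\bigr)P_{ck}.
\]
This gives $0$ for $c\ne4$, which is equivalent to the $1$- and low-design moment identities. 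For $c=4$ it gives $\tfrac{8}{3}+\cdots=60$; this is a one-line evaluation using $g_\pm$ against $2\pm6\sqrt5$.

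Hence $G^{(1)}=60E_4$, and the rank is $m_4=14$. This matches $\dim\Herm_3(\Hh)^0$, so the image spans the trace-zero space. At $\sigma_2$ the values $g_\pm$ are exchanged, and the same computation gives $60E_5$.

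\emph{Main obstacle.} The hardest part will be the bookkeeping in Step 1: confirming that the $L_k$ commute and pinning down the rows $E_4,E_5$ exactly in $\Q(\sqrt5)$. I would first verify the relation $P_{c4}+P_{c5}$ equal to the eigenvalue of $A_4+A_5$, which is rational. I would then solve a quadratic for $P_{c4}$ using the orthogonality relations
\[
\sum_i P_{ci}P_{di}/k_i=\delta_{cd}\,315/m_c .
\]
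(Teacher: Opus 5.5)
Your proposal is correct and follows essentially the same route as the paper: you simultaneously diagonalize the commuting intersection matrices (using $L_4$ to split the double eigenvalue $-5$ of $L_1$), obtain the multiplicities from the row-orthogonality formula $m_c=315/\sum_i P_{ci}^2/k_i$, and evaluate $\sum_k(4\rho_k-\frac43)P_{ck}$ row by row, which gives $60$ on $E_4$ and $0$ on the other eigenspaces, with Galois conjugation handling $\sigma_2$. Your alternative for pinning down the golden rows also works: $P_{44}+P_{45}=4$, and orthogonality of the rows $E_4$ and $E_5$ gives $P_{44}P_{45}=-176$, so $\{P_{44},P_{45}\}=\{2+6\sqrt5,\,2-6\sqrt5\}$.
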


\begin{proof}
Simultaneous diagonalization of the commuting intersection matrices gives \eqref{eq:first-eigenmatrix}.  If $v_i=P_{0i}$ denotes the valency of~$A_i$, row orthogonality gives the checkable formula
\[
 m_c=\frac{|Y|}{\displaystyle\sum_{i=0}^5 P_{ci}^2/v_i},
\]
which yields the displayed multiplicities.  Substituting the six projective-inner-product values in the rows shows that~$G^{(1)}$ has eigenvalue~$60$ on~$E_4$ and eigenvalue~$0$ on the other five eigenspaces; Galois conjugation gives the corresponding statement for~$G^{(2)}$ and~$E_5$.
\end{proof}

\begin{remark}[why the scheme theorem is special here]\label{rem:beyond-general-design-scheme-criterion}
The standard sufficient criterion that an~$s$-distance design of strength~$\ell$ carry an association scheme requires~$\ell\ge2s-2$ \cite{DGS,HoggarDesigns}.  Here $s=\ell=5$, so the criterion would require strength~$8$ and does not apply.  The five-class scheme is instead forced by the special arithmetic angle set, the rationality and integrality of the intersection numbers, and nonnegativity.
\end{remark}

\begin{corollary}[distance fusion]\label{prop:distance-angle}
The orthogonality graph~$\Gamma$ on~$Y$ is distance-regular of diameter~$4$.  Its distance matrices are
\[
 D_0=A_0,
 \qquad D_1=A_1,
 \qquad D_2=A_2,
 \qquad D_3=A_3,
 \qquad D_4=A_4+A_5,
\]
and its intersection array is
\begin{equation}\label{eq:intersection-array}
 \{10,\,8,\,8,\,2;1,\,1,\,4,\,5\}.
\end{equation}
Equivalently,
\begin{equation}\label{eq:angle-distance-table}
\begin{array}{c|cccccc}
 t(x,\,y)&1&0&\frac12&\frac14&g_+&g_-\\ \hline
 d_\Gamma(x,\,y)&0&1&2&3&4&4.
\end{array}
\end{equation}
\end{corollary}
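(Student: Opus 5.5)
The plan is to read everything off the intersection matrices of \cref{thm:angle-association-scheme}, with no new moment calculations.

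\emph{Step 1: the BFS layers from a point.} Fix $x\in Y$. By \cref{lem:norm2-multiplicity-free} and the first row of each $P^{(k)}$, the orthogonal neighbours of $x$ are the $10$ points in relation $\rho_1$. I will describe where the $\Gamma$-neighbours of a point $y$ lie, using the $i=1$ row of $P^{(k)}$, which counts the $z$ with $t(x,z)=\rho_1$ and $t(y,z)=\rho_j$. Reading the same matrices with the roles of $x$ and $y$ swapped gives, for each $y$ in relation $\rho_k$ to $x$, the distribution of the relations $t(x,z)$ over the neighbours $z$ of $y$. That is the column $j=1$ of $P^{(k)}$, which equals the row by symmetry.

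\emph{Step 2: the column $j=1$ for each $k$.}
\begin{itemize}
\item For $k=1$, the column is $(1,8,0,0,0)$: one neighbour of $y$ is $x$, eight are in relation $\rho_1$ to $x$, and none is farther. So $a_1=8$; this is the $p^1_{11}=8$ count.
\item For $k=2$, the column is $(1,1,8,0,0)$: one neighbour of $y$ is orthogonal to $x$, so $c_2=1$.
\item For $k=3$, the column is $(0,4,4,1,1)$. This gives $c_3=4$ from $\rho_2$ and $a_3=4$. The entries $1+1$ in $\rho_4,\rho_5$ give $b_3=2$.
\item For $k=4$ and $k=5$, the column is $(0,0,5,0,5)$ or $(0,0,5,5,0)$. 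So $c_4=5$ from $\rho_3$, and $a_4=5$ within $\rho_4\cup\rho_5$.
\end{itemize}
For the $b_i$, I use the row $i=1$ of the matrices for the pair $(x,z)$. From $P^{(1)}$, a neighbour of $y\sim x$ lies in $\rho_2$ relative to $x$ eight times. So $b_1=8$ when the start is $\rho_1$. I will double-check these via valency balance: $k_i b_i=k_{i+1}c_{i+1}$. The checks are $10\cdot 8=80\cdot1$, $80\cdot 8=160\cdot 4$, and $160\cdot 2=64\cdot 5$.

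\emph{Step 3: induction on distance.} By induction, the points in relation $\rho_1,\rho_2,\rho_3$ are exactly those at distance $1,2,3$, and $\rho_4\cup\rho_5$ is distance $4$. Every point has a neighbour in the previous layer, since $c_i>0$. No point has a neighbour two layers away, by the zero patterns above. Each $\rho_4,\rho_5$ point has no neighbour beyond its own layer, since $b_4=0$. Hence the diameter is $4$ and the distance matrices are as stated. The constants $c_i,a_i,b_i$ depend only on the relation, because $Y$ carries an association scheme, so $\Gamma$ is distance-regular with the array \eqref{eq:intersection-array}. The table \eqref{eq:angle-distance-table} is then a restatement.

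\emph{Main obstacle.} The only delicate point is bookkeeping: choosing which index of $p^k_{ij}$ refers to $x$ and which to $y$, so that the layer transitions are read correctly. Once the symmetry $p^k_{ij}=p^k_{ji}$ is used, as the displayed matrices show, this is routine.
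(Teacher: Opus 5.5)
Your strategy is the paper's. Its four identities $A_1^2=10A_0+A_1+A_2$, \dots, $A_1(A_4+A_5)=2A_3+5(A_4+A_5)$ are the columns $j=1$ of the $P^{(k)}$ written in the Bose--Mesner algebra, since $(A_iA_1)_{xy}=p^{\,k}_{i1}$ when $t(x,y)=\rho_k$. Your layer induction unpacks the paper's remark that these recurrences identify $A_2$, $A_3$, $A_4+A_5$ as the distance matrices. Your readings for $k=2,3,4,5$ are correct, including the check that both golden columns give $c_4=a_4=5$, which is needed because $D_4$ is a fusion. Your balance checks are also correct.

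The case $k=1$, however, is misread. The counts $p^{\,k}_{ij}(x,y)$ never include $x$ or $y$, because $t(x,x)=t(y,y)=1$ is not one of the $\rho_i$. In the column $(1,8,0,0,0)$ of $P^{(1)}$, the entry $p^{\,1}_{11}=1$ is therefore not $x$. It is the third point of the orthogonal frame through $x$ and $y$, so $a_1=1$. The entry $p^{\,1}_{21}=8$ counts neighbours of $y$ in relation $\rho_2$ to $x$, not $\rho_1$, so $b_1=8$.

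Your reading, with one neighbour equal to $x$, eight in $\rho_1$ and none farther, fails three ways. It gives $y$ only nine neighbours, it gives $b_1=0$, and it gives $a_1+b_1+c_1=17\ne10$. You then reread the same number correctly as $b_1=8$; by your own symmetry remark it is $p^{\,1}_{12}=p^{\,1}_{21}$.

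The $a_i$ are not part of the array, and the induction uses only the zero entries of the columns together with $c_2,c_3,c_4>0$. So the conclusion survives the correction, but you should state $a_1=1$. It is the coefficient of $A_1$ in $A_1^2$, and it is what later gives each edge a unique triangle.

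A minor point: for a general $Y$, take the valency $10$ from the row sums of $P^{(1)}$ rather than from \cref{lem:norm2-multiplicity-free}, which is stated for $X_L$. The first row gives $1+8$, plus $y$ itself.
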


\begin{proof}
The intersection tables give
\begin{align*}
 A_1^2&=10A_0+A_1+A_2,\\
 A_1A_2&=8A_1+A_2+4A_3,\\
 A_1A_3&=8A_2+4A_3+5(A_4+A_5),\\
 A_1(A_4+A_5)&=2A_3+5(A_4+A_5).
\end{align*}
These are the distance-polynomial recurrences for \eqref{eq:intersection-array}.  In particular, they show successively that~$A_2$, $A_3$, and~$A_4+A_5$ are the distance-$2$, distance-$3$, and distance-$4$ matrices.  For the same graph, intersection array, and spectrum in the standard distance-regular-graph tables, cf.\ \cite[pp.~408--410]{BCN}.
\end{proof}

\begin{remark}[logical direction]\label{rem:angle-moment-direction}
For the norm-$2$ shell of any extremal~$L$, \cref{prop:extremal-shell-angles} derives the five-value restriction from icosian integrality, extremality, and reduction modulo~$2$, while \cref{lem:primitive-shell-lines} proves multiplicity-freeness and \cref{lem:norm2-multiplicity-free} determines the valencies from projective moments.  No coordinate orbit calculation is used.  For a general comparison set~$Y$ of cardinality~$315$, the same
five-angle set is an explicit hypothesis, and equality in
\cref{lem:hoggar-bound-equality} supplies the design property.  The
valencies, all intersection numbers, the distance fusion, and the
intersection array then follow from the projective~$5$-design identities.  Cohen--Tits enters only when comparing the resulting fused geometries.
\end{remark}

\subsection{Orthogonal frames and the near-octagon geometry}

The identity
\[
 A_1^2=10A_0+A_1+A_2
\]
shows that every edge of~$\Gamma$ lies in a unique triangle.  Three pairwise orthogonal quaternionic lines in~$\Hh^3$ form a complete orthogonal frame.  Each point has ten neighbors, paired into five triangles through the point, and hence the resulting incidence geometry~$\mathcal N_Y$ has
\[
 \frac{315\cdot5}{3}=525
\]
lines.  For $Y=X_L$, these triangles are precisely the orthogonal norm-$2$ frames of~$L$.  Write~$\Fr(L)$ for the set of these unordered frames.  Thus every extremal class has
\[
 |\Fr(L)|=525;
\]
this is the count used later in the mass--code argument.

\begin{theorem}[near-octagon construction]\label{thm:near-octagon}
The incidence geometry~$\mathcal N_Y$, whose points are~$Y$ and whose lines are the orthogonal triples, is a near octagon of order~$(2,\,4)$.  Its point graph is~$\Gamma$, and it has the regular parameters customarily denoted~$(2,\,4;0,\,3)$.
\end{theorem}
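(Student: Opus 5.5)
We verify the axioms of a near polygon directly from the distance-regular structure of $\Gamma$ recorded in \cref{prop:distance-angle}, using only the intersection array $\{10,8,8,2;1,1,4,5\}$ and the identity $A_1^2=10A_0+A_1+A_2$.

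\emph{Incidence and order.} We first check that $\mathcal N_Y$ is a partial linear space of order $(2,4)$. Since $a_1=\lambda=1$, every edge of $\Gamma$ lies in exactly one triangle. Hence two distinct points lie on at most one line, and lines are exactly the maximal cliques, each of size $3$ (so $s=2$). Each point has $k=10$ neighbours, partitioned by the triangles into pairs, so every point lies on $t+1=5$ lines and $t=4$. Since adjacency in $\Gamma$ means lying on a common line, the collinearity graph of $\mathcal N_Y$ is $\Gamma$.

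\emph{Near-polygon axiom.} Next we need that for every point $p$ and line $\ell$ there is a unique point of $\ell$ nearest to $p$. Let $\ell=\{x,y,z\}$ and suppose $d(p,x)=d(p,y)=i$. If $i=0$ this is impossible, so take $i\ge1$. The third point $z$ is adjacent to both $x$ and $y$, and $d(p,z)\in\{i-1,i,i+1\}$. The key counting tool is that, for $x$ at distance $i$ from $p$, the neighbours of $x$ at distance $i$ from $p$ number $a_i=k-b_i-c_i$, which gives $a_1=1$, $a_2=1$, $a_3=2$, $a_4=5$.

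\begin{itemize}
\item[$i=1$.] Here $x$, $y$ and $p$ would form a triangle, so $p$ lies on $\ell$, a contradiction.
\item[$i=2,3$.] We must exclude that two points of a line lie at the same distance $i$ while the third does not lie at distance $i-1$. The cleanest route is via the standard criterion: a distance-regular graph in which every edge lies in a unique maximal clique of size $s+1$, and in which $a_i=c_i\,a_1$ for all $i$, is the collinearity graph of a near polygon (cf.\ \cite{BCN}, Theorem~6.4.1 and its converse). Here $a_1=1$ and $c_i=1,1,4,5$, while $a_i=1,1,2,5$. These agree at $i=1,2,4$ but fail at $i=3$, where $a_3=2\ne c_3=4$, so the criterion as stated does not apply directly. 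The correct near-polygon identity is instead $a_i=c_i\,a_1$ only in the ``regular'' case with $t_i+1=c_i$ lines. Instead I plan a direct argument: for $x$ at distance $i$, the $a_i$ same-distance neighbours must come in pairs on lines through $x$ if a line could contain two such points. The near-polygon property is the statement that each line through $x$ contains either $0$ or $2$ other points at distance $i$, and never a point at $i-1$ together with one at $i$.
\end{itemize}

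\emph{Main obstacle.} I expect this local case analysis at $i=2,3$ to be the hardest step. I would first try to settle it by counting paths using the intersection tables $P^{(k)}$ of \cref{thm:angle-association-scheme} with the fused relation at distance $4$. Concretely, for $d(p,x)=i$, count the pairs (neighbour $u$ of $x$, third point $w$ of the line $xu$) by the distances of $u$ and $w$ to $p$. The entries $p^k_{1j}$ give how the $10$ neighbours of $x$ distribute, and the triangle structure pairs them. Consistency of these counts with $c_i$ lines through $x$ each meeting distance $i-1$ in exactly one point then forces the near-polygon axiom. Geometrically this says that a frame meets the points at distance $<i$ from $p$ in at most one point.

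\emph{Parameters and regularity.} Once the axiom holds, the geometry is a near octagon because $\Gamma$ has diameter $4$. For points at distance $2$, $c_2=1$ shows there is exactly one common neighbour, so $t_2+1=c_2=1$ gives the usual $0$. For points at distance $3$, $c_3=4$ neighbours of $y$ closer to $p$ lie on $4$ distinct lines, by uniqueness of the nearest point, so $t_3+1=4$ and $t_3=3$. Regularity follows because these numbers depend only on the distance, which is guaranteed by distance-regularity. This gives the parameters $(2,4;0,3)$.
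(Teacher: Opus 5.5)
\textbf{There is a genuine gap.} You do not prove the near-polygon axiom, which is the substance of the theorem. For $i=2,3$ you only announce a plan.

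\textbf{The obstacle you found is an arithmetic slip.} From $\{10,8,8,2;1,1,4,5\}$ one gets $a_3=k-b_3-c_3=10-2-4=4$, not $2$. The same value appears as the entry $p_{31}^{\,3}=4$ of $P^{(3)}$. Thus $a_i=c_i=c_ia_1$ for every $i=1,\dots,4$, and the criterion you quote does hold.

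\textbf{Your reformulation of the target is also wrong.} With $s=2$, take a line through a point $x$ at distance $i$ from $p$. In a near polygon it must have distance pattern $(i-1,i,i)$ or $(i,i+1,i+1)$. So it contains one or no further point at distance $i$, not ``$0$ or $2$''. A point at distance $i-1$ together with one at distance $i$ is exactly the first allowed pattern, not a forbidden configuration. The local analysis you planned was therefore aimed at a false statement.

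\textbf{With the corrected $a_3$, a short double count closes the gap.} Fix $p$, and let $k_j$ be the number of points at distance $j$ from $p$. The three points of a line are pairwise adjacent, so the sorted pattern of their distances to $p$ is $(j,j,j)$, $(j,j,j+1)$ or $(j-1,j,j)$. Let these occur $u_j$, $v_j$ and $w_j$ times. Every edge lies in a unique line. Counting ordered adjacent pairs with both ends at distance $j$ gives $k_ja_j=6u_j+2v_j+2w_j$. Counting ordered adjacent pairs with ends at distances $j$ and $j-1$ gives $k_jc_j=2v_{j-1}+2w_j$. Hence $a_j=c_j$ yields $3u_j+v_j=v_{j-1}$. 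Since $v_0=0$, induction gives $u_j=v_j=0$ for all $j$, which is exactly the near-polygon axiom. After that, your computation $t_2=c_2-1=0$ and $t_3=c_3-1=3$ goes through, and so does the rest of your argument.

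\textbf{The paper takes a different, geometric route.} A frame satisfies $p_1+p_2+p_3=I_3$, so $t(x,p_1)+t(x,p_2)+t(x,p_3)=1$ for every $x\in Y$. The only triples of admissible values summing to $1$ are $(1,0,0)$, $(0,\frac12,\frac12)$, $(\frac12,\frac14,\frac14)$ and $(\frac14,g_+,g_-)$. By \eqref{eq:angle-distance-table} these are the distance patterns $(0,1,1)$, $(1,2,2)$, $(2,3,3)$ and $(3,4,4)$.

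\textbf{What each route buys.} The paper's argument uses the quaternionic realization and needs no check of $a_i=c_i$. Your corrected argument uses only the intersection array and the unique-triangle property. It therefore applies to any distance-regular graph with these parameters.
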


\begin{proof}
Let $\ell=\{p_1,\,p_2,\,p_3\}$ be a frame, where the same symbols denote the rank-one projectors.  Then
\[
 p_1+p_2+p_3=I_3.
\]
For every~$x\in Y$,
\begin{equation}\label{eq:frame-inner-product-sum}
 t(x,\,p_1)+t(x,\,p_2)+t(x,\,p_3)=1.
\end{equation}
The only unordered triples chosen from
\[
 \left\{0,\,\frac14,\,\frac12,\,g_+,\,g_-,\,1\right\}
\]
whose sum is~$1$ are
\begin{equation}\label{eq:frame-angle-triples}
 (1,\,0,\,0),\qquad
 \left(0,\,\frac12,\,\frac12\right),\qquad
 \left(\frac12,\,\frac14,\,\frac14\right),\qquad
 \left(\frac14,\,g_+,\,g_-\right).
\end{equation}
Using \eqref{eq:angle-distance-table}, these correspond respectively to distance patterns
\[
 (0,\,1,\,1),\qquad(1,\,2,\,2),\qquad(2,\,3,\,3),\qquad(3,\,4,\,4)
\]
from~$x$ to the three points of~$\ell$.  Thus every point has a unique nearest point on every line.  The point graph has diameter~$4$, so~$\mathcal N_Y$ is a near octagon.  Every line has three points and every point lies on five lines, so the order is~$(2,\,4)$.  For two points at distance $i=2$ or~$3$, the number~$t_i+1$ of lines through the second point containing a point one step nearer the first equals the backward intersection number~$c_i$.  Hence \eqref{eq:intersection-array} gives $t_2=c_2-1=0$ and $t_3=c_3-1=3$, proving the asserted regular parameters.
\end{proof}

\subsection{Uniqueness of the golden-angle fission}

\begin{proposition}[uniqueness of the golden fission]\label{prop:golden-fission-uniqueness}
Let~$\Gamma_0$ be the point graph of a regular near octagon with parameters~$(2,\,4;0,\,3)$, and let~$D_i$ denote its distance-$i$ relation.  Suppose that
\[
 D_4=R_4\mathbin{\dot\cup}R_5
\]
is a symmetric fission with both valencies equal to~$32$ for which
\[
 D_0,\,D_1,\,D_2,\,D_3,\,R_4,\,R_5
\]
have the intersection tables displayed in \cref{thm:angle-association-scheme}.  Then there is at most one such unordered pair~$\{R_4,\,R_5\}$; equivalently, any two such fissions differ at most by the global exchange~$R_4\leftrightarrow R_5$.
\end{proposition}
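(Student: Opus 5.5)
The plan is to convert the fission into a $\pm1$ labelling of the distance-$4$ pairs and to show that the intersection tables propagate this labelling rigidly through a connected auxiliary graph. The abstract summary calls this a "connected bipartition argument". Given one admissible fission $\{R_4,R_5\}$, any other admissible fission $\{R_4',R_5'\}$ defines a function $\epsilon$ on the edges of $D_4$, with $\epsilon(x,y)=+1$ if the two fissions agree on $\{x,y\}$ (that is, $(x,y)\in R_4\cap R_4'$ or $R_5\cap R_5'$) and $-1$ otherwise. The goal is to show that $\epsilon$ is constant.

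\textbf{Local rigidity.} I would use the entries of the tables in \cref{thm:angle-association-scheme} that separate $R_4$ from $R_5$ locally. Take a pair $(x,y)\in D_3$, i.e.\ relation $\rho_3$. From $P^{(3)}$ we read $p^{3}_{14}=p^3_{15}=1$: exactly one point orthogonal to $x$ is in $R_4$ with $y$, and exactly one is in $R_5$ with $y$. Geometrically this is forced by the frame triple $(\tfrac14,g_+,g_-)$ of \eqref{eq:frame-angle-triples}. For a point $y$ at distance $3$ from a line $\ell$ through $x$, the two far points of $\ell$ are at distance $4$ from $y$, and the fission must put one of them in $R_4$ and the other in $R_5$. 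This holds for both fissions, since both satisfy the same tables. So for each such configuration, with $\ell=\{p,z,z'\}$ where $d(y,p)=3$ and $d(y,z)=d(y,z')=4$, we get
\[
 \epsilon(y,z)=\epsilon(y,z').
\]
Equivalently, $\epsilon$ is constant along each pair of distance-$4$ edges $\{y,z\},\{y,z'\}$ whose second endpoints $z,z'$ are collinear. Further rigidity of the same kind comes from entries such as $p^4_{44}=5$, $p^4_{45}=1$, $p^4_{55}=1$ in $P^{(4)}$. I would use these only if needed.

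\textbf{Connectivity.} Let $\Sigma$ be the graph whose vertices are the ordered distance-$4$ pairs $(y,z)$. Join $(y,z)$ to $(y,z')$ when $z,z'$ lie on a common line whose third point is at distance $3$ from $y$, and by symmetry also join $(y,z)$ to $(y',z)$. The key step, and the main obstacle, is to show that $\Sigma$ is connected. Then $\epsilon$ is constant, so the second fission either equals the first or is its global exchange $R_4\leftrightarrow R_5$.

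For a fixed $y$, consider the $32+32=64$ points of $\Gamma_4(y)$. The relevant lines are those meeting $\Gamma_3(y)$ in exactly one point and $\Gamma_4(y)$ in two points. There are $160\cdot c$ of them, with $c=a_3/2$ computed from the array as $160\cdot 5/2\cdot\frac1{\ldots}$; the count comes out of \eqref{eq:intersection-array}. These lines make $\Gamma_4(y)$ into a graph $\Delta_y$ of valency $5$. This follows because each $z\in\Gamma_4(y)$ has $c_4=5$ neighbours in $\Gamma_3(y)$, each lying on a distinct line through $z$.

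I then need to show that $\Delta_y$ is connected, or else that its components are linked by moving $y$ (varying the first coordinate). My first attempt would be spectral. The adjacency of $\Delta_y$ is expressible through the scheme, and its restriction is the $64$-vertex subconstituent graph. I would show that the eigenvalue $5$ of $\Delta_y$ is simple by using the local eigenvalue interlacing with the eigenmatrix \eqref{eq:first-eigenmatrix}, or alternatively by a direct count: the tables show that every component contains both an $R_4$ and an $R_5$ neighbour of $y$ in the right proportions.

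If $\Delta_y$ turns out to be disconnected, for example bipartite-like with two halves of size $32$ exchanged by the fission, I would glue different base points instead. Use moves $(y,z)\to(y',z)$ with $y'$ on a line through $y$ at distance $4$ from $z$, and combine them with connectivity of $\Gamma$ itself, which has diameter $4$. This produces a global connectivity argument on $\Sigma$ and hence the uniqueness of the unordered pair $\{R_4,R_5\}$.
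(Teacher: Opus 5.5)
Your local step is correct, and it is the same fact the paper uses. Every edge of $\Gamma_4(y)$ lies on a line whose third point is in $\Gamma_3(y)$, so $p^3_{14}=p^3_{15}=1$ says that each edge of your $\Delta_y$ joins $R_4(y)$ to $R_5(y)$. Thus $\Delta_y$ is exactly the paper's graph $H_y$, and it is bipartite with parts $R_4(y)$ and $R_5(y)$. The paper reads this off $p^4_{41}=0$, $p^4_{51}=5$ instead. There is, however, a genuine gap at the step you yourself call the main obstacle: you never prove that $\Delta_y$ is connected, and neither of your suggested routes can do it.

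\textbf{Why the suggested routes fail.} Cauchy interlacing only gives $\lambda_2(\Delta_y)\le\lambda_2(\Gamma)=5$. For a $5$-regular graph this is vacuous, so it cannot show that the eigenvalue $5$ is simple. The ``direct count'' is no better: in any regular bipartite graph, every component meets the two parts in equally many vertices. That fact carries no information about the number of components. Your fallback also conflates ``bipartite with two halves of size $32$'' with ``disconnected'', and the idea of gluing base points is never turned into an argument.

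\textbf{The missing idea is $c_2=1$.} Suppose two distinct paths $z-u-w$ and $z-v-w$ in $\Delta_y$ had the same endpoint $w$. Then $z$ and $w$ lie in the same part, so they are nonadjacent even in $\Gamma_0$, because $\Delta_y$ is an induced subgraph. They would then be at distance $2$ with two common neighbours, which is impossible. Hence the $5\cdot4=20$ endpoints of two-step paths from $z$ are distinct and lie in $z$'s part. Every component therefore has at least $21$ vertices on each side. Since each side has only $32<42$ vertices, $\Delta_y$ is connected.

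\textbf{The global step also needs an argument.} Once $\Delta_y$ is connected, $\epsilon(y,\cdot)$ is a constant $\epsilon_y$, and symmetry gives $\epsilon_y=\epsilon_z$ whenever $d(z,y)=4$. You then still need the distance-$4$ graph to be connected; connectivity and diameter $4$ of $\Gamma$ alone do not give this. The paper obtains it from
\[
\sum_{i,j\in\{4,5\}}p^1_{ij}=32>0.
\]
This means each edge of $\Gamma_0$ can be replaced by a path of length $2$ in the distance-$4$ graph.

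\textbf{Minor point.} The number of lines you were trying to count is simply $160$: one line through each point of $\Gamma_3(y)$, since $b_3=2$.
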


\begin{proof}
The point graph~$\Gamma_0$ has intersection array
\[
 \{10,\,8,\,8,\,2;1,\,1,\,4,\,5\}.
\]
Fix a point~$x$ and write~$\Gamma_{0,\,4}(x)$ for the set of points
at distance~$4$ from~$x$.  Let~$H_x$ be the graph induced there by~$\Gamma_0$.
Each vertex has~$10-c_4=5$ neighbors in this layer, so~$H_x$ is~$5$-regular.  Put
\[
 R_i(x)=\{y:(x,\,y)\in R_i\},\qquad i=4,\,5.
\]
For~$y\in R_4(x)$, the entries
\[
 p_{41}^{\,4}=0,\qquad p_{51}^{\,4}=5
\]
show that all five neighbors of~$y$ in~$H_x$ lie in~$R_5(x)$; for~$y\in R_5(x)$ the entries
\[
 p_{41}^{\,5}=5,\qquad p_{51}^{\,5}=0
\]
give the reverse statement.  Thus~$H_x$ is bipartite with parts~$R_4(x)$ and~$R_5(x)$, each of size~$32$.

The graph~$H_x$ is connected.  Indeed, let~$y$ be a vertex.  Its five neighbors in~$H_x$ each have four neighbors other than~$y$.  The resulting twenty vertices are distinct.  For if two different two-step paths~$y-u-z$ and~$y-v-z$ had the same endpoint, then~$y$ and~$z$ would be distinct nonadjacent vertices with the two common neighbors~$u$ and~$v$.  They would therefore be at distance~$2$ in~$\Gamma_0$, contradicting $c_2=1$.  Hence the component of~$y$ contains at least~$21$ vertices in the same bipartition class as~$y$.  Every component of a~$5$-regular bipartite graph has equally many vertices in its two parts, so every component of~$H_x$ contains at least~$21$ vertices in each part.  Since each whole part has only~$32$ vertices, $H_x$ is connected.  Its bipartition is therefore unique up to exchange.

Now let~$(R_4,\,R_5)$ and~$(S_4,\,S_5)$ be two fissions with the stated parameters.  For pairs at distance~$4$, write
\[
 \sigma_R(x,\,y)=
 \begin{cases}
  1,&(x,\,y)\in R_4,\\
 -1,&(x,\,y)\in R_5,
 \end{cases}
 \qquad
 \sigma_S(x,\,y)=
 \begin{cases}
  1,&(x,\,y)\in S_4,\\
 -1,&(x,\,y)\in S_5.
 \end{cases}
\]
The uniqueness of the bipartition of~$H_x$ gives a sign~$\varepsilon_x\in\{\pm1\}$ such that
\[
 \sigma_S(x,\,y)=\varepsilon_x\sigma_R(x,\,y)
 \qquad\bigl(y\in\Gamma_{0,\,4}(x)\bigr).
\]
Since both fissions are symmetric, for $d_{\Gamma_0}(x,\,y)=4$ one has
\[
 \varepsilon_x\sigma_R(x,\,y)
 =\sigma_S(x,\,y)
 =\sigma_S(y,\,x)
 =\varepsilon_y\sigma_R(y,\,x),
\]
and hence $\varepsilon_x=\varepsilon_y$.

Finally, the distance-$4$ graph of~$\Gamma_0$ is connected.  For adjacent~$x$ and~$y$, the table~$P^{(1)}$ gives
\[
 \#\bigl(\Gamma_{0,\,4}(x)\cap\Gamma_{0,\,4}(y)\bigr)
 =\sum_{i,\,j=4}^{5}p_{ij}^{\,1}=32.
\]
Thus every edge of the connected graph~$\Gamma_0$ can be replaced by a path of length two in its distance-$4$ graph.  Consequently all~$\varepsilon_x$ are equal, and the two fissions agree globally or are globally exchanged.
\end{proof}

\section{The Hall--Janko lattice: automorphisms and uniqueness}\label{sec:tits-lattice}

\subsection{Tits's lattice and the residual mass}

Let~$\mathfrak P_0$ and~$\mathfrak P$ be two distinct maximal right ideals of~$\cI$ above~$2$.  Tits's lattice may be written
\begin{equation}\label{eq:tits-model}
 \HJ=
 \left\{(\lambda_1,\,\lambda_2,\,\lambda_3)\in\cI^3:
 \lambda_j\equiv\lambda_1\pmod{\mathfrak P},
 \quad
 \lambda_1+\lambda_2+\lambda_3\equiv0\pmod{\mathfrak P_0}
 \right\},
\end{equation}
with Hermitian form
\begin{equation}\label{eq:tits-form}
 h_{\mathrm T}(x,\,y)=\frac12\sum_{j=1}^3\overline{x_j}y_j.
\end{equation}
Taking $\pi=1+i$, of reduced norm~$2$, coordinatewise left
multiplication by~$\pi^{-1}$ identifies~$(D^3,\,h_{\mathrm T})$ with the
standard Hermitian space.  All norms, shells, and isometries of this
congruence model are taken with~$h_{\mathrm T}$.  We verify unimodularity
and extremality directly from the congruences.

Under~$\cI/2\cI\cong M_2(\F_4)$, a right ideal consists of matrices
whose columns lie in a fixed line.  Choose coordinates in which the
lines for~$\mathfrak P$ and~$\mathfrak P_0$ are the two coordinate
lines.  Each column of a vector in \eqref{eq:tits-model} then lies in the code
\begin{equation}\label{eq:frame-code}
 C=
 \left\{
 ((x_1,\,b),\,(x_2,\,b),\,(x_3,\,b))\in(\F_4^2)^3:
 x_1+x_2+x_3=0
 \right\}.
\end{equation}
Canonical quaternionic conjugation reduces
to the adjugate involution.  Under Morita equivalence the integrality
and self-duality conditions for~$h_{\mathrm T}$ therefore become,
respectively, isotropy and self-duality for the standard symplectic
pairing on~$(\F_4^2)^3$, up to a nonzero scalar.  The distinct lines
form a hyperbolic pair, and on the displayed code the pairing is
\[
 \bigl\langle ((x_j,\,b)),\,((y_j,\,c))\bigr\rangle
 =c\sum_jx_j+b\sum_jy_j=0.
\]
Since $\dim_{\F_4}C=3$, it is maximal isotropic in a six-dimensional
symplectic space.  More explicitly, $2\cI^3$ and~$\cI^3$ are dual
for~$h_{\mathrm T}$ at~$2$, and the intermediate lattice is self-dual
exactly when its Morita code is self-dual.  At every other finite place
the congruences impose no restriction and~$1/2$ is a unit.  Hence~$\HJ$ is integral and unimodular.

The two distinct maximal right ideals of~$M_2(\F_4)$ meet trivially,
so
\begin{equation}\label{eq:tits-separated-lines}
 \mathfrak P\cap\mathfrak P_0=2\cI,
 \qquad \HJ\cap De_j=2\cI e_j\quad(1\le j\le3).
\end{equation}
If $h_{\mathrm T}(\lambda,\,\lambda)=1$, then $\sum_j\nrd(\lambda_j)=2$.  Every nonzero summand is a totally
positive integer of~$K$ with both conjugates at most~$2$.  Writing it
as~$m+n\tau$ gives~$|n|\sqrt5<2$, hence $n=0$; the only possibilities
are~$1$ and~$2$.  If two coordinates have norm~$1$ and the third is
zero, the congruences force both nonzero coordinates into~$\mathfrak P$, although their reductions have determinant~$1$.
If only one coordinate is nonzero, it has norm~$2$ and belongs to~$2\cI$ by \eqref{eq:tits-separated-lines}, which is impossible because
its reduced norm would lie in~$4\cO_K$.  Thus
\begin{equation}\label{eq:tits-extremal}
 r_{\HJ}(1)=0,
\end{equation}
and hence also $r_{\HJ}(\tau^2)=0$ by \eqref{eq:central-unit-shell-symmetry}.  Thus~$\HJ$ is a nonsplit class in~$\cG_3$.

The mass left after removing the split class is
\begin{equation}\label{eq:nonsplit-mass}
 \Mass_{\mathrm{ns}}
 =\Mass(\cG_3)-\frac1{|\Aut_{\cI}(\Ls)|}
 =\frac{67-7}{72\,576\,000}
 =\frac{60}{72\,576\,000}
 =\frac1{1\,209\,600}.
\end{equation}
Since~$\HJ$ is one of the nonsplit classes, its mass contribution is at most the total residual mass:
\[
 \frac1{|\Aut_{\cI}(\HJ)|}
 \le \Mass_{\mathrm{ns}}.
\]
Therefore
\begin{equation}\label{eq:aut-tits-lower}
 |\Aut_{\cI}(\HJ)|\ge1\,209\,600.
\end{equation}
Equality holds precisely when~$\HJ$ exhausts the residual mass, equivalently when it is the unique nonsplit class.  Thus the automorphism-group order and the class-number statement will follow simultaneously from the opposite upper bound.

\subsection{The intrinsic frames, the framed code, and mass saturation}\label{sec:frames}

Let
\[
 X=X_{L_{\mathrm T}}=(\HJ)_2/\cI^1.
\]
Since~$\HJ$ is extremal, \cref{lem:norm2-multiplicity-free,thm:near-octagon} show that~$X$ is a~$315$-point projective~$5$-design with exactly
\begin{equation}\label{eq:frame-count}
 |\Fr(\HJ)|=525
\end{equation}
orthogonal frames.  Thus the frame count used below is a consequence of the projective moments, not an imported coordinate-orbit calculation.

Fix the frame~$F_0\in\Fr(\HJ)$ formed by the three coordinate lines in \eqref{eq:tits-model}.  Its separated sublattice is
\[
 M_{F_0}=2\cI\oplus2\cI\oplus2\cI.
\]
This description follows from \eqref{eq:tits-separated-lines}.
The reduction is a right~$M_2(\F_4)$-module: right multiplication mixes the two columns, whereas~$\cI$-linear block isometries act on each column by left matrix
multiplication.  Taking one column is the Morita equivalence for these
right modules.  The congruences in \eqref{eq:tits-model} act
columnwise, so~$\HJ/M_{F_0}$ corresponds to two copies of the same
one-column code.  Let~$\ell$ and~$\ell_0$ be the distinct lines
corresponding to~$\mathfrak P/2\cI$ and~$\mathfrak P_0/2\cI$.
Before choosing coordinates, the code is
\begin{equation}\label{eq:frame-code-invariant}
 \left\{(v_1,\,v_2,\,v_3)\in(\F_4^2)^3:
 v_j-v_1\in\ell,\quad
 v_1+v_2+v_3\in\ell_0\right\}.
\end{equation}  We use the column-vector Morita functor: left multiplication by a norm-one residue class acts by the usual left action of~$\SL_2(4)$ on~$\F_4^2$.  Choose
\[
 \ell=\F_4(1,\,0),
 \qquad
 \ell_0=\F_4(0,\,1).
\]
Writing $v_j=(x_j,\,b_j)$, the first condition gives $b_1=b_2=b_3$, while the second gives $x_1+x_2+x_3=0$.
Thus this is precisely the code~$C$ in \eqref{eq:frame-code}.
The full quotient is~$C^{\oplus2}$; indeed $|C|=4^3=64$ and $|C^{\oplus2}|=4096=|\HJ/M_{F_0}|$.  Since left multiplication acts by the same matrix on both columns, stabilizing~$C^{\oplus2}$ is equivalent to stabilizing~$C$.  No Frobenius-semilinear automorphisms of~$\F_4$ occur, because the reduction of~$\cI^1$ lies in the linear group~$\SL_2(4)$.

\begin{proposition}[frame stabilizer]\label{prop:code-stabilizer}
The stabilizer of~$F_0$ in~$\Aut_{\cI}(\HJ)$ has order at most~$2304$.
\end{proposition}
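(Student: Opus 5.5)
The plan is to embed the frame stabilizer in the monomial group of the frame and then cut it down by the requirement that it preserve the Morita code $C$ of \eqref{eq:frame-code}.

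\textbf{Step 1: monomial form.} Let $g\in\Aut_{\cI}(\HJ)$ stabilize $F_0$. Then $g$ permutes the three coordinate quaternionic lines $De_j$. After composing with the coordinate permutation, which visibly preserves the congruences \eqref{eq:tits-model}, we may assume that $g$ fixes each line.

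An $\cI$-linear isometry of a right line $e_jD$ has the form $e_j x\mapsto e_j d_j x$ with $\nrd(d_j)=1$. The separated sublattice $M_{F_0}=\HJ\cap(\bigoplus De_j)$-pieces $=2\cI e_j$ of \eqref{eq:tits-separated-lines} is intrinsic to the frame. Hence $d_j(2\cI)=2\cI$, so $d_j$ lies in the left order of $\cI$, which is $\cI$, and therefore $d_j\in\cI^1$.

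So the stabilizer embeds in $(\cI^1)^3\rtimes S_3$. Each of its elements is determined by its action on $D^3$, and it must preserve $\HJ/M_{F_0}\cong C^{\oplus2}$. As noted before the proposition, preserving $C^{\oplus 2}$ is equivalent to preserving $C$.

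\textbf{Step 2: pass to $\F_4$.} Reduce modulo $2$ via \eqref{eq:reduction-mod2}. The kernel of $(\cI^1)^3\to\SL_2(4)^3$ is $\{\pm1\}^3$, of order $8$, and it acts trivially on the code. It therefore suffices to show that the set of $(A_1,A_2,A_3,\pi)\in\SL_2(4)^3\rtimes S_3$ stabilizing $C$ has at most $6\cdot48=288$ elements, since $8\cdot288=2304$.

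The permutation part contributes at most $6$, because $C$ is symmetric in the coordinates. So the task is to count triples $(A_1,A_2,A_3)$ with $(v_j)\mapsto(A_jv_j)$ preserving $C$.

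\textbf{Step 3: the explicit count.} This is the main computational point; I expect it to be short once the right test vectors are chosen. Use coordinates $\ell=\F_4e_1$ and $\ell_0=\F_4e_2$.

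First apply a triple to $(e_1,e_1,0)\in C$. The two defining conditions of \eqref{eq:frame-code-invariant} give
\[
 A_2e_1-A_1e_1\in\ell,\qquad A_1e_1+A_2e_1\in\ell_0 .
\]
In characteristic $2$ the sum equals the difference, so $A_1e_1+A_2e_1\in\ell\cap\ell_0=0$, that is, $A_1e_1=A_2e_1$. The third-coordinate condition $0-A_1e_1\in\ell$ then forces $A_1e_1=\lambda e_1$ with $\lambda\in\F_4^\times$. By symmetry, $A_je_1=\lambda e_1$ for all $j$, with the same $\lambda$.

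Next apply the triple to $(e_2,e_2,e_2)\in C$ and write $A_je_2=(c_j,\delta_j)$. The condition $\det A_j=1$ gives $\delta_j=\lambda^{-1}$, so the differences $A_je_2-A_1e_2$ lie in $\ell$ automatically. The remaining condition, that $\sum_jA_je_2\in\ell_0$, says exactly $c_1+c_2+c_3=0$.

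This leaves at most $3$ choices of $\lambda$ times $16$ choices of $(c_1,c_2,c_3)$ with zero sum, that is, at most $48$ triples. These triples do in fact preserve $C$, but only the upper bound is needed here.

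Combining the three factors gives
\[
 |\operatorname{Stab}(F_0)|\le 8\cdot 6\cdot 48=2304 .
\]

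The only delicate point is Step 1, namely justifying that every stabilizing isometry is diagonal with entries in $\cI^1$. I would handle it through the intrinsic sublattice $M_{F_0}$ and the fact that the left order of the maximal order $\cI$ is $\cI$ itself.
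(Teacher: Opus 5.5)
Your proposal is correct and follows the paper's proof. It shows the stabilizer is monomial with entries in $\cI^1$ via the separated lines $\HJ\cap De_j=2\cI e_j$, bounds the kernel of reduction modulo $2$ by $\{\pm1\}^3$, and counts $48$ block-fixing triples times $6$ permutations, giving $8\cdot288=2304$; your test vectors $(e_1,e_1,0)$ and $(e_2,e_2,e_2)$ simply make explicit the constraints $b_i=0$, $d_i=s$, $a_i=s^{-1}$, $c_1+c_2+c_3=0$ that the paper states directly (the garbled phrase defining $M_{F_0}$ in Step~1 should just read $\HJ\cap De_j=2\cI e_j$).
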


\begin{proof}
First consider the stabilizer of~$C$ in~$\SL_2(4)^3\rtimes S_3$.  If
\[
 g_i=\begin{pmatrix}a_i&c_i\\ b_i&d_i\end{pmatrix}\in\SL_2(4)
\]
fixes the three blocks, preservation of \eqref{eq:frame-code} forces
\[
 b_1=b_2=b_3=0,
 \qquad
 d_1=d_2=d_3=s,
 \qquad
 a_i=s^{-1},
 \qquad
 c_1+c_2+c_3=0.
\]
There are $3\cdot4^2=48$ such triples.  The block-fixing subgroup
therefore has structure
\[
 \F_4^2\rtimes\F_4^\times\cong2^4\!:\!3,
\]
and adjoining the six block permutations gives the full code stabilizer
\[
 2^4\!:\!(3\times S_3)
\]
of order~$288$.

An~$\cI$-linear isometry preserving~$F_0$ is monomial.  By
\eqref{eq:tits-separated-lines}, it permutes the lattices~$2\cI e_j$,
so its nonzero entries are units of~$\cI$; the isometry condition puts
them in~$\cI^1$.  It therefore preserves~$M_{F_0}$.  Reduction modulo~$2\cI$ maps its stabilizer to the code stabilizer, and \eqref{eq:reduction-mod2} shows that the kernel is contained in the independent sign changes on the three frame lines.  Hence
\[
 |\Aut_{\cI}(\HJ)_{F_0}|\le2^3\cdot288=2304.
\]
\end{proof}

The full automorphism group permutes~$\Fr(\HJ)$, so orbit--stabilizer and \eqref{eq:frame-count} give
\begin{equation}\label{eq:aut-tits-upper}
 |\Aut_{\cI}(\HJ)|
 \le525\cdot2304
 =1\,209\,600.
\end{equation}
Combining \eqref{eq:aut-tits-lower} and \eqref{eq:aut-tits-upper} yields
\begin{equation}\label{eq:tits-aut-order}
 \boxed{|\Aut_{\cI}(\HJ)|=1\,209\,600.}
\end{equation}
Consequently the contribution of~$\HJ$ is the entire residual mass \eqref{eq:nonsplit-mass}.  Positivity of the remaining mass summands gives
\[
 \cG_3=\{[\Ls],\,[\HJ]\}.
\]
Thus~$\HJ$ is the unique nonsplit, and hence the unique extremal, class.  Both the order computation and the uniqueness statement have been obtained without using the order of a previously identified finite group.

\begin{remark}[mass-assisted frame method]\label{rem:mass-assisted-frame-method}
The finite code supplies only the uniform upper bound $|\Aut_{\cI}(\HJ)_{F_0}|\le2304$; no transitivity on frames is assumed.  The residual mass supplies the opposite global inequality, and the identity
\[
 \frac1{10\,368\,000}+\frac1{1\,209\,600}
 =\frac{67}{72\,576\,000}
\]
forces equality throughout.  Thus mass saturation simultaneously proves the class-number statement, sharpness of the stabilizer bound, and transitivity on the~$525$ frames.  This is a mass-assisted form of Conway's frame method.
\end{remark}

\begin{corollary}[Tits's reflection identification]\label{cor:tits-reflection-group}
The group~$\Aut_{\cI}(\HJ)$ is generated by quaternionic reflections in its norm-$2$ lines and is isomorphic to~$2.J_2$.
\end{corollary}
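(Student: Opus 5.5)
The corollary has two parts: generation by quaternionic reflections, and the isomorphism with $2.J_2$. The plan is to prove generation first and then identify the group by its order and by its known realization.

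\emph{Reflections lie in the group.} For each norm-$2$ vector $x\in\HJ$ and each $u\in\cI^1$ of finite order, consider the quaternionic reflection
\[
 s_{x,u}(v)=v-x(1-u)\,h_{\mathrm T}(x,v)/2 .
\]
The first thing to check is that $s_{x,u}$ preserves $\HJ$. It is an isometry of the Hermitian space and fixes $x^\perp$. It maps $\HJ$ into itself as soon as $(1-u)h_{\mathrm T}(x,v)\in 2\cI$ for all $v\in\HJ$. The inner products $h_{\mathrm T}(x,v)$ lie in $\cI$, and the congruence structure of \cref{prop:extremal-shell-angles} (reduction modulo~$2$) constrains them. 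The simplest case is $u=-1$, giving the order-$2$ reflection $v\mapsto v-x\,h_{\mathrm T}(x,v)$. This always preserves $\HJ$, because $h_{\mathrm T}(x,v)\in\cI$ and $x\in\HJ$. So for each of the $315$ lines of $X$ there is at least a reflection of order~$2$ in $\Aut_{\cI}(\HJ)$; the higher-order reflections with $u$ of order $3$ or $5$ can be treated the same way when needed.

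\emph{The reflections generate.} Let $R$ be the subgroup generated by these reflections. It is normal in $\Aut_{\cI}(\HJ)$, since conjugation permutes norm-$2$ lines. By the near-octagon structure (\cref{thm:near-octagon}) and the connectivity of $\Gamma$, $R$ acts on $X$ with an orbit structure compatible with the association scheme. Reflections in the three lines of a frame $F$ generate sign changes in $\Aut(\HJ)_F$, and reflections in lines at distance~$1$ or~$2$ from the frame lines move $F$ to adjacent frames. I would then show two things. First, $R$ is transitive on the $525$ frames: the frame graph is connected, because frames sharing a point are linked by reflections in points at projective inner product $1/2$, which exchange lines. Second, $R_{F_0}$ is large enough to fill the full frame stabilizer of order $2304$ from \cref{prop:code-stabilizer}. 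For the second point, compute inside the monomial group which elements of $2^3\cdot 2^4{:}(3\times S_3)$ arise. The monomial elements come from products of reflections in the $10$ orthogonal neighbours and the $80$ points at inner product $1/2$, whose coordinates modulo $M_{F_0}$ realize generators of the code stabilizer. Orbit--stabilizer together with \eqref{eq:tits-aut-order} then gives $R=\Aut_{\cI}(\HJ)$.

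\emph{Identification with $2.J_2$.} The centre contains $-1$. Modulo $\{\pm1\}$, the group $\Aut_{\cI}(\HJ)/\{\pm1\}$ has order $604\,800$ and acts faithfully on the near octagon $\mathcal N_X$. Cohen--Tits identifies $\mathcal N_X$ with the Hall--Janko near octagon, whose full automorphism group is $J_2{:}2$. Our group has index $2$ in $J_2{:}2$, so it equals $J_2$ provided it contains no outer elements. This follows because an $\cI$-linear map is $K$-linear and cannot realize the Galois-twisting outer automorphism, which swaps $g_+$ and $g_-$ and so cannot preserve projective inner products at a fixed real place. The extension by $\{\pm1\}$ is nonsplit: $J_2$ has no faithful $6$-dimensional complex representation, and $\Aut_{\cI}(\HJ)$ embeds in $\Sp_6(\C)$. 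Hence the group is $2.J_2$, in agreement with Tits.

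\emph{Main obstacle.} I expect the hardest step to be showing that the reflection subgroup already realizes the full frame stabilizer, in particular the $\F_4^\times$ factor of order $3$ and the $S_3$ permutations. I would first try to exhibit explicit products of two reflections in lines at inner product $1/2$ that induce a transposition of two frame lines, and check their code action directly.
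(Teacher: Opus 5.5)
\textbf{Verdict.} Your identification of $G=\Aut_{\cI}(\HJ)$ with $2.J_2$ is sound, but the generation step has a genuine gap.

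\textbf{The gap: reflections never move a frame to an adjacent one.} Your frame-transitivity argument needs reflections that carry a frame to an adjacent frame, and none exist. Let $y\in X$ and suppose $p\in F\cap s_yF$; then $s_y(p)\in F$.
\begin{itemize}
\item If $s_y(p)=q\neq p$, then $s_yF$ contains the edge $\{p,q\}$. It therefore equals $F$, because every edge lies in a unique triangle.
\item If $s_y(p)=p$, then $y=p$ or $y\perp p$. The case $y=p$ is trivial, since $s_p$ fixes $p^\perp$ pointwise. In the case $y\perp p$, let $\{p,y,y'\}$ be the frame through $p$ and $y$. Every neighbour $q\notin\{y,y'\}$ of $p$ has $t(y,q)=\tfrac12$, so $t(q,s_yq)=(1-2t(q,y))^2=0$. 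Also $s_yq\perp s_yp=p$. Hence $s_yq$ is the third point of the frame through $p$ and $q$, and $s_y$ stabilizes all five frames through $p$.
\end{itemize}
So a reflection either stabilizes $F$ (exactly when $y$ is at distance at most $1$ from $F$) or sends $F$ to a disjoint frame. Your connectivity argument therefore yields no transitivity of $R$.

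The other half, that $R_{F_0}$ fills the stabilizer of order $2304$ and in particular its $\F_4^\times$ factor, is left as an unperformed computation. The $S_3$ part you flagged is actually easy. The reflection in any of the eight points orthogonal to one frame line but outside $F_0$ swaps the other two lines, by the same formula. As written, however, $R=G$ is not proved.

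\textbf{The repair: reverse the order.} Your identification argument never uses generation, so do it first. Once $G/\{\pm1\}\cong J_2$ is known to be simple:
\begin{itemize}
\item $R$ is normal, since $gs_\ell g^{-1}=s_{g\ell}$.
\item $R$ is not contained in $\{\pm1\}$, so its image in $J_2$ is everything and $R\{\pm1\}=G$.
\item $-1\in R$, being the product of the reflections in the three coordinate lines of $F_0$. These are norm-$2$ lines by \eqref{eq:tits-separated-lines}.
\end{itemize}
Hence $R=G$ with no frame computation at all.

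\textbf{Comparison with the paper.} The paper goes the other way. It checks only that the reflections lie in $G$. It then imports Tits's theorem that they generate a group $2.J_2$ of order $1\,209\,600$, and concludes from the mass-derived order \eqref{eq:tits-aut-order}. Your route replaces that single input with Cohen--Tits, $\Aut(\mathcal N_X)\cong J_2{:}2$, the minimal nontrivial degree $14$ of $J_2$, and its Schur multiplier. It has the merit of not presupposing which group the reflections generate.

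\textbf{Two small points in your identification step.}
\begin{itemize}
\item Faithfulness of $G/\{\pm1\}$ on $X$ needs a line. The centered points span $J_0$, so an element acting trivially on $X$ is trivial in $P\Sp(3)$, as in \cref{thm:quaternionic-cubic-rigidity}.
\item Your ``no outer elements'' argument is unnecessary. $J_2$ is the unique index-two subgroup of $J_2{:}2$, because $J_2$ is perfect.
\end{itemize}
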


\begin{proof}
For a norm-$2$ vector~$x$, put $\ell=xD$ and
\begin{equation}\label{eq:reflection}
 s_\ell(v)=v-2x h_{\mathrm T}(x,\,x)^{-1}h_{\mathrm T}(x,\,v).
\end{equation}
The formula depends only on~$\ell$.
Since $h_{\mathrm T}(x,\,x)=2$, it simplifies to
$s_\ell(v)=v-xh_{\mathrm T}(x,\,v)$.
Integrality gives~$h_{\mathrm T}(x,\,v)\in\cI$ for~$v\in\HJ$, so~$s_\ell$ preserves~$\HJ$; it is an~$\cI$-linear isometric involution.  This argument works
for every norm-$2$ vector in an integral Hermitian~$\cI$-lattice.
The external input is Tits's identification of the generated
reflection group with~$2.J_2$ \cite{Tits}.  Its order is~$1\,209\,600$,
which agrees with the independently determined order in
\eqref{eq:tits-aut-order}; hence the reflection group is all of~$\Aut_{\cI}(\HJ)$.
\end{proof}

Together with \eqref{eq:split-aut}, this proves \cref{thm:main}.

\begin{proposition}[Galois exchange of the golden relations]\label{prop:golden-galois-symmetry}
There is a permutation~$\iota$ of~$X$ which fixes the relations with projective inner products~$0$, $1/2$, and~$1/4$, and interchanges the two relations with projective inner products~$g_+$ and~$g_-$.
\end{proposition}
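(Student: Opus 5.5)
We construct $\iota$ from the semilinear automorphism $\gamma$ of \eqref{eq:icosian-galois-map}, which lifts the Galois conjugation $a\mapsto a'$ to $D$ and preserves $\cI$.

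\emph{Step 1: define the map.} Apply $\gamma$ coordinatewise on $D^3$, $\gamma(x)=(\gamma(x_1),\gamma(x_2),\gamma(x_3))$. By \eqref{eq:icosian-galois-properties} this satisfies $h(\gamma x,\gamma y)=\gamma(h(x,y))$, because $\gamma$ is multiplicative and commutes with conjugation; the same holds for $h_{\mathrm T}$. The lattice $\gamma(\HJ)$ is again a unimodular $\cI$-lattice in the standard space. However, $\gamma$ is $\cI$-semilinear, $\gamma(xq)=\gamma(x)\gamma(q)$, so $\gamma(\HJ)$ is a right $\cI$-module because $\gamma(\cI)=\cI$. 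Its norm shells are $\gamma(\HJ)_a=\gamma((\HJ)_{a'})$. Since $1'=1$, the lattice is extremal; since $2'=2$, its norm-$2$ shell is $\gamma((\HJ)_2)$. Right $\cI^1$-orbits go to right $\cI^1$-orbits because $\gamma(\cI^1)=\cI^1$.

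\emph{Step 2: return to $\HJ$.} By the class-number statement just proved, $\cG_3=\{[\Ls],[\HJ]\}$ and $\HJ$ is the unique extremal class. Hence there is an $\cI$-linear isometry $\phi\colon\gamma(\HJ)\to\HJ$. Put $\iota=\phi\circ\gamma$ on $X=(\HJ)_2/\cI^1$. This is well defined on orbits, and it is a bijection because both maps are.

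Alternatively, we can avoid Step 2 by checking directly that the congruence model \eqref{eq:tits-model} is $\gamma$-stable. This requires $\gamma$ to fix $\mathfrak P$ and $\mathfrak P_0$, or to swap them in a way compensated by a coordinate change. That check depends on the choice of ideals, so the class-number route is safer.

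\emph{Step 3: effect on projective inner products.} For representatives $x,y\in(\HJ)_2$,
\[
 \nrd h(\iota x,\iota y)=\nrd h(\gamma x,\gamma y)=\nrd(\gamma h(x,y))=\nrd(h(x,y))'.
\]
So $t(\iota x,\iota y)$, computed at $\sigma_1$, equals $\sigma_1(\nrd(h(x,y))'/4)$, which is $t(x,y)$ computed at $\sigma_2$. Recall from the proof of \cref{prop:extremal-shell-angles} that the reduced norms are $0,1,2,\tau^{\pm2}$, giving $t$-values $0,1/4,1/2,g_\pm$. Galois conjugation fixes the rational values $0,1/4,1/2$ and swaps $\tau^2/4=g_+$ with $\tau^{-2}/4=g_-$. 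Hence $\iota$ fixes the relations $\rho_1,\rho_2,\rho_3$ and interchanges $\rho_4$ and $\rho_5$.

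\emph{Main obstacle.} The delicate point is the bookkeeping in Steps 1 and 2. One must confirm that $\gamma(\HJ)$ really lies in the genus $\cG_3$: it must be a right $\cI$-lattice (using that $\gamma$ preserves right multiplication structure, not left) and unimodular for the same standard form. Then the isometry $\phi$ supplied by the classification must be $\cI$-linear, so that it preserves $t$ exactly. Both follow from \eqref{eq:icosian-galois-properties} and $\gamma(\cI)=\cI$.
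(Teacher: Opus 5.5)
Your proof is correct. It shares its skeleton with the paper's proof but takes the correcting isometry from a different source.

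Both arguments apply the coordinatewise semilinear map $\gamma$, compose it with an $\cI$-linear isometry $\gamma(\HJ)\to\HJ$, and then read off the angles from $\nrd(\gamma(q))=\nrd(q)'$. You obtain that isometry abstractly from the class-number theorem: $\gamma(\HJ)$ is unimodular and has no norm-$1$ vectors, so it is not $\Ls$. The paper instead writes the isometry down. The map $\gamma$ carries the congruence model \eqref{eq:tits-model} to the same model for the ordered pair $(\gamma(\mathfrak P),\gamma(\mathfrak P_0))$. Since $\SL_2(4)$ is transitive on ordered pairs of distinct lines of $\F_4^2$ and $\cI^1\to\SL_2(4)$ is onto, some $u\in\cI^1$ has $u\gamma(\mathfrak P)=\mathfrak P$ and $u\gamma(\mathfrak P_0)=\mathfrak P_0$. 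Left multiplication by $u$ in all three coordinates is right-linear and unitary, and it preserves both congruences. So $\phi=uI_3$ works.

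The dependence on the choice of ideals, which made you prefer the class-number route, is therefore harmless: a single unit absorbs it.

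What each route buys:
\begin{itemize}
\item The paper's version is self-contained and independent of the mass-saturation classification, and it yields an explicit semilinear self-map $\eta$ of $\HJ$.
\item Your version uses the heavier classification. That is legitimate, since the classification is proved earlier and does not use this proposition. In exchange it needs no knowledge of the model, and it shows more generally that the unique extremal class is stable under any such Galois twist.
\end{itemize}

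The only implicit point in your route is that every unimodular lattice in the standard space lies in the single genus $\cG_3$. This holds because locally, via Morita, such a lattice is a unimodular symplectic lattice, and the paper's definition of $\cG_3$ already assumes it.
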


\begin{proof}
The automorphism~$\gamma$ of \eqref{eq:icosian-galois-map} permutes
the five maximal right ideals above~$2$.  Its coordinatewise image
of \eqref{eq:tits-model} is the same congruence model for the ordered
pair $(\gamma(\mathfrak P),\,\gamma(\mathfrak P_0))$.
The group~$\SL_2(4)$ is transitive on ordered pairs of distinct
lines in~$\F_4^2$.  By \eqref{eq:reduction-mod2}, choose~$u\in\cI^1$ with $u\gamma(\mathfrak P)=\mathfrak P$ and $u\gamma(\mathfrak P_0)=\mathfrak P_0$.
Then
\[
 \eta(\lambda_1,\,\lambda_2,\,\lambda_3)
   =(u\gamma(\lambda_1),\,u\gamma(\lambda_2),\,u\gamma(\lambda_3))
\]
preserves~$\HJ$: using the same unit in all coordinates preserves
both the difference and sum congruences.  Moreover, $h_{\mathrm T}(\eta x,\,\eta y)=\gamma(h_{\mathrm T}(x,\,y))$.
Thus~$\eta$ induces $\iota([x])=[\eta(x)]$ on~$X$, and the
reduced norms of inner products are Galois-conjugated.  This fixes
the rational angles and exchanges~$g_+$ and~$g_-$.
\end{proof}

\begin{remark}[point transitivity]\label{rem:point-transitivity}
The projective action of~$\Aut_{\cI}(\HJ)$ on~$X$ is transitive, but this is not used in the classification or rigidity proof.  Equality in the frame-stabilizer bound implies transitivity on the~$525$ frames and gives the full~$S_3$ action on the three lines of a frame.  Since every point lies on five frames, flag counting gives one point orbit of size $525\cdot3/5=315$.
\end{remark}

\begin{remark}[Comparison with the known class-number computations]\label{rem:classification-comparison}
Coulangeon proves the same two-class result by local neighbors: $\cI^3$ has a unique irreducible~$2\cI$-neighbor, identified with the Tits lattice, and a prime above~$5$ excludes a further rootless class \cite{Coulangeon95}.  Kirschmer independently records class number~$2$, with group labels~$[\SL_2(5)]_1^3$ and~$[2.J_2]_3$ \cite[Theorem~9.3.2 and p.~150]{Kirschmer}.  The present proof instead derives the nonsplit shell counts from the Hilbert Eisenstein average, bounds the automorphism group through the~$525$ frames and their finite shadow, and closes the argument by residual-mass saturation.
\end{remark}

\section{Combinatorial and geometric rigidity}\label{sec:geometric-rigidity}

Equality in Hoggar's bound supplies the design property, and the
intrinsic shell calculation determines the association-scheme
parameters for every~$315$-point comparison set with the prescribed
five projective inner products.  The Tits congruence model supplies
the arithmetic symmetry that exchanges the two golden labels.

\begin{corollary}[combinatorial rigidity for the prescribed angles]\label{cor:prescribed-angle-scheme-uniqueness}
Let $Y\subset\Hh\PP^2$ be a set of~$315$ points whose projective inner
products between distinct points belong to \eqref{eq:five-angle-set}.
Then there is a bijection
\[
 \phi:Y\longrightarrow X
\]
preserving all projective inner products.
\end{corollary}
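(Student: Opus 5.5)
The plan is to pass from $Y$ to its fused incidence geometry, invoke Cohen--Tits uniqueness to match it with $\mathcal N_X$, and then resolve the golden-angle ambiguity with the fission uniqueness and the Galois exchange.

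First, by \cref{lem:hoggar-bound-equality}, $Y$ is a quaternionic projective $5$-design. Hence \cref{thm:angle-association-scheme} applies to both $Y$ and $X$, and both carry the same five-class scheme with the intersection tables $P^{(k)}$. By \cref{prop:distance-angle} and \cref{thm:near-octagon}, the geometries $\mathcal N_Y$ and $\mathcal N_X$ are regular near octagons with parameters $(2,4;0,3)$. The Cohen--Tits theorem \cite{CohenTits} then gives an isomorphism $\psi:\mathcal N_Y\to\mathcal N_X$. In particular $\psi$ is an isomorphism of point graphs, so it preserves graph distance. By \eqref{eq:angle-distance-table} it therefore preserves the projective inner products $1,0,\tfrac12,\tfrac14$ exactly, and maps the union of the two golden relations of $Y$ onto the union of the two golden relations of $X$.

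Next, transport the golden fission of $Y$ along $\psi$. Let $R_4,R_5$ be the images under $\psi$ of the $g_+$- and $g_-$-relations of $Y$. Then $\{R_4,R_5\}$ is a symmetric fission of the distance-$4$ relation of $\Gamma_X$. Together with $D_0,\dots,D_3$ it has the intersection tables of \cref{thm:angle-association-scheme}, because those tables hold for $Y$ and $\psi$ is a bijection. The genuine golden relations of $X$ give a second fission with the same tables. By \cref{prop:golden-fission-uniqueness}, the two fissions coincide or differ by the global exchange $R_4\leftrightarrow R_5$.

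If they coincide, set $\phi=\psi$. Otherwise set $\phi=\iota\circ\psi$, where $\iota$ is the permutation of \cref{prop:golden-galois-symmetry}. It fixes the relations $0,\tfrac12,\tfrac14$ and exchanges the $g_\pm$-relations, so it corrects the global swap. In either case $\phi$ preserves all six values of $t$, including $1$ on the diagonal.

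The main obstacle is the Cohen--Tits step. The near-octagon axioms must be verified for $\mathcal N_Y$ exactly as stated in \cite{CohenTits}: lines of size $3$, five lines per point, the unique-nearest-point property, and $t_2=0$, $t_3=3$. \cref{thm:near-octagon} supplies all of these. One should also check that the theorem yields a genuine isomorphism of point graphs, not merely equality of parameters. The remaining steps are formal once the intersection tables are known to be identical for $Y$ and $X$; the only point needing care is that the fission obtained through $\psi$ really satisfies the hypotheses of \cref{prop:golden-fission-uniqueness}.
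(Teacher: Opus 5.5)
Your proposal is correct and follows the paper's proof step for step. Hoggar equality gives the $5$-design property, and the forced intersection tables and near-octagon structure let Cohen--Tits identify $\mathcal N_Y$ with $\mathcal N_X$. The transported golden fission is then fixed up to global exchange by the fission-uniqueness proposition, and the Galois permutation $\iota$ corrects the exchange when needed. You also make explicit two steps the paper leaves implicit: the Cohen--Tits isomorphism preserves the point graph and hence distances, and the transported fission inherits the intersection tables.
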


\begin{proof}
By \cref{lem:hoggar-bound-equality}, $Y$ is a projective~$5$-design.
Thus \cref{thm:angle-association-scheme} gives the displayed five-class intersection numbers for~$Y$, and \cref{thm:near-octagon} shows that its golden-angle fusion is a regular near octagon with parameters~$(2,\,4;0,\,3)$.  By Cohen--Tits uniqueness \cite{CohenTits}, this fused geometry is isomorphic to the Hall--Janko near octagon.  After transporting the two golden relations of~$Y$ along such an isomorphism, \cref{prop:golden-fission-uniqueness} shows that they are the two golden relations of~$X$, either in the corresponding or in the exchanged order.  In the latter case, compose with the Galois permutation~$\iota$ from \cref{prop:golden-galois-symmetry}.  The resulting bijection preserves every projective inner product.
\end{proof}

\subsection{The cubic moment and quaternionic rigidity}

In~$J=\Herm_3(\Hh)$, with identity~$I_3$, let
\[
 J_0=\{a\in J:\Tr_J(a)=0\}
\]
be the trace-zero subspace.  The Jordan product is
$a\circ b=(ab+ba)/2$, with positive trace form
$T(a,\,b)=\Tr_J(a\circ b)$ as in \cref{sec:projective-harmonics}.
The rank-one projectors representing~$\Hh\PP^2$ are precisely the primitive
idempotents~$p\in J$ of trace~$1$, and $T(p,\,q)=t(p,\,q)$.
Center the projective plane in the~$14$-dimensional space~$J_0$ by
\begin{equation}\label{eq:quaternionic-centering}
 \widehat p=2p-\frac23I_3.
\end{equation}
Then
\begin{equation}\label{eq:quaternionic-centered-angle}
 T(\widehat p,\,\widehat q)=4t(p,\,q)-\frac43,
 \qquad
 T(\widehat p,\,\widehat p)=\frac83.
\end{equation}

\begin{lemma}[invariants of the trace-zero quaternionic Jordan module]\label{lem:quaternionic-jordan-invariants}
The invariant polynomial algebra on~$J_0$ is
\begin{equation}\label{eq:quaternionic-invariant-ring}
 \R[J_0]^{P\Sp(3)}
 =\R[q,\,c],
 \qquad
 q(a)=T(a,\,a),
 \qquad
 c(a)=\Tr_J(a^3).
\end{equation}
In particular, invariant quadratic and cubic polynomials are scalar multiples of~$q$ and~$c$, respectively.
\end{lemma}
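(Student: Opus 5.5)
The plan is to use the Chevalley restriction theorem for the symmetric space attached to $P\Sp(3)$ acting on $J_0$. Every element $a\in J=\Herm_3(\Hh)$ is diagonalizable by an element of $\Sp(3)$ acting by $a\mapsto gag^*$: the spectral theorem for quaternionic Hermitian matrices gives real eigenvalues $\lambda_1,\lambda_2,\lambda_3$ and a quaternionic unitary $g$ with $gag^*=\operatorname{diag}(\lambda_1,\lambda_2,\lambda_3)$. On $J_0$ the diagonal slice is $\mathfrak a=\{\operatorname{diag}(\lambda_1,\lambda_2,\lambda_3):\sum\lambda_i=0\}$. The monomial elements of $\Sp(3)$, namely permutation matrices, normalize $\mathfrak a$ and induce the full $S_3$ on it. An invariant polynomial $f\in\R[J_0]^{P\Sp(3)}$ is therefore determined by its restriction to $\mathfrak a$, and that restriction is $S_3$-invariant.

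The first step is injectivity of the restriction $\R[J_0]^{P\Sp(3)}\to\R[\mathfrak a]^{S_3}$. This holds because every orbit meets $\mathfrak a$. The second step is to identify $\R[\mathfrak a]^{S_3}$. By the fundamental theorem on symmetric polynomials, restricted to the hyperplane $e_1=0$, it is the polynomial algebra on the power sums $p_2=\sum\lambda_i^2$ and $p_3=\sum\lambda_i^3$. Indeed, when $e_1=0$ we have $e_2=-p_2/2$ and $e_3=p_3/3$. The third step is surjectivity onto this ring. Here $q(a)=T(a,a)=\Tr_J(a^2)$ restricts to $p_2$ and $c(a)=\Tr_J(a^3)$ restricts to $p_3$. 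Both are visibly invariant, since $\Tr_J(ga^kg^*)=\Tr_J(a^k)$ for $g\in\Sp(3)$; the trace of a Hermitian matrix is real and conjugation-invariant because the powers remain Hermitian. Given any invariant $f$, write $f|_{\mathfrak a}=F(p_2,p_3)$. Then $f-F(q,c)$ is invariant and vanishes on $\mathfrak a$, hence on all of $J_0$. So $\R[J_0]^{P\Sp(3)}=\R[q,c]$. Algebraic independence of $q$ and $c$ follows from that of $p_2$ and $p_3$ on $\mathfrak a$.

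The degree statements follow by grading. The only monomials in $q$ (degree $2$) and $c$ (degree $3$) of total degree $2$ or $3$ are $q$ and $c$ themselves, and there are no invariants of degree $1$.

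The only step needing care is the quaternionic spectral theorem with $\Sp(3)$-conjugation: diagonalizing a Hermitian quaternionic matrix with real eigenvalues, and checking that the powers $a^k$ are computed compatibly, i.e.\ $(gag^*)^k=ga^kg^*$, which holds since $g^*g=I$. I would cite the standard proof. It takes a maximizer $v$ of $h(v,av)$ on the unit sphere, shows that $v\Hh$ is invariant, and inducts on the orthogonal complement. Associativity of $M_3(\Hh)$ makes the trace identities routine. Since $\tr$ need not be cyclic over $\Hh$, I would also check that $\operatorname{Re}\tr$ is cyclic. This suffices, because $\Tr_J(a^k)$ is real for Hermitian $a^k$.
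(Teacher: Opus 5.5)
Your proposal is correct and follows essentially the same route as the paper. Both arguments diagonalize by the spectral theorem and use the permutation (frame-stabilizer) action to reduce to $S_3$-invariants on the trace-zero diagonal plane, which are generated by $p_2$ and $p_3$; beyond the paper, you also spell out surjectivity via the restrictions of $q$ and $c$, algebraic independence, the grading argument for the degree-$2$ and degree-$3$ statements, and the cyclicity of $\operatorname{Re}\tr$.
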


\begin{proof}
By the spectral theorem for the Euclidean Jordan algebra~$J$, every~$a\in J_0$ is conjugate under $P\Sp(3)=\Aut(J)$ to
\[
 a=\lambda_1p_1+\lambda_2p_2+\lambda_3p_3,
 \qquad
 \lambda_1+\lambda_2+\lambda_3=0,
\]
for a Jordan frame~$(p_1,\,p_2,\,p_3)$.  The group is transitive on frames and the frame stabilizer induces the full symmetric group on the three idempotents.  Restriction to the diagonal plane therefore identifies the invariant algebra with the symmetric polynomials in~$\lambda_1$, $\lambda_2$, and~$\lambda_3$ subject to their sum being zero.  This algebra is generated by
\[
 \lambda_1^2+\lambda_2^2+\lambda_3^2=q(a)
 \quad\text{and}\quad
 \lambda_1^3+\lambda_2^3+\lambda_3^3=c(a).
\]
See also \cite[Chs.~5 and~7]{SpringerVeldkamp}.
\end{proof}

\begin{lemma}[the trace form and cubic as finite design moments]\label{lem:quaternionic-cubic-moment}
Let $Y\subset\Hh\PP^2$ be a finite quaternionic projective~$3$-design of cardinality~$N$.  Then, for~$a$ and~$b$ in~$J_0$,
\begin{align}
 \sum_{p\in Y}T(a,\,\widehat p)T(b,\,\widehat p)
 &=\frac{4N}{21}T(a,\,b),
 \label{eq:quaternionic-second-moment}\\
 \sum_{p\in Y}T(a,\,\widehat p)^3
 &=\frac{2N}{21}\Tr_J(a^3).
 \label{eq:quaternionic-third-moment}
\end{align}
In particular, for the Hall--Janko design~$X$ these constants are~$60$ and~$30$.
\end{lemma}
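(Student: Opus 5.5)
The plan is to replace both sums by Haar integrals using the design property, identify the integrals by invariant theory (\cref{lem:quaternionic-jordan-invariants}), and fix the two constants by evaluating at one convenient element of~$J_0$.

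\emph{Reduction to Haar integrals.} For fixed $a,b\in J_0$, the function $p\mapsto T(a,\widehat p)T(b,\widehat p)$ is a polynomial of degree~$2$ in the real entries of~$p$, and $p\mapsto T(a,\widehat p)^3$ has degree~$3$. Both are polynomials in the entries of the projector~$p$, since $\widehat p=2p-\frac23 I_3$ is affine in~$p$. By \cref{def:projective-design}, a $3$-design therefore gives
\[
 \sum_{p\in Y}T(a,\widehat p)T(b,\widehat p)=N\int T(a,\widehat p)T(b,\widehat p)\,d\mu(p),
\]
and similarly for the cube. Both integrals are $P\Sp(3)$-invariant in~$a$ (respectively in the pair $a,b$), because $\mu$ is invariant and $T$ is $\Aut(J)$-invariant. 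By \cref{lem:quaternionic-jordan-invariants}, the quadratic integral is $\kappa_2\,q(a)$, and polarization turns it into $\kappa_2\,T(a,b)$. The cubic integral is $\kappa_3\,c(a)$, since every invariant cubic on~$J_0$ is a multiple of~$c$.

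\emph{Computing the constants.} Test with $a=\widehat{p_0}$ for a fixed point~$p_0$. Then $T(a,\widehat p)=4t-\frac43$, where $t=t(p_0,p)$ has the beta$(2,4)$ moments \eqref{eq:haar-moments}:
\[
 \mathbb E t=\frac13,\qquad \mathbb E t^2=\frac{(2)_2}{(6)_2}=\frac17,\qquad \mathbb E t^3=\frac{(2)_3}{(6)_3}=\frac{1}{14}.
\]
Write $T(a,\widehat p)=4(t-\frac13)$.

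For the second moment, $16\operatorname{Var}(t)=16\bigl(\frac17-\frac19\bigr)=\frac{32}{63}$. On the other side, $q(a)=\frac83$ by \eqref{eq:quaternionic-centered-angle}. Hence $\kappa_2=\frac{32}{63}\cdot\frac38=\frac4{21}$.

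For the third moment, the central moment is
\[
 \mathbb E(t-\tfrac13)^3=\frac1{14}-\frac17+\frac{2}{27}=\frac{1}{378},
\]
so $64\,\mathbb E(t-\frac13)^3=\frac{32}{189}$. The eigenvalues of $\widehat{p_0}$ are $\frac43,-\frac23,-\frac23$, which gives $c(a)=\frac{64}{27}-\frac{16}{27}=\frac{48}{27}=\frac{16}9$. Hence $\kappa_3=\frac{32}{189}\cdot\frac9{16}=\frac2{21}$.

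Substituting $N=315$ yields the constants $60$ and $30$ for the Hall--Janko design~$X$. This is consistent with \cref{cor:first-eigenmatrix}, where the centered Gram matrix is $60E_4$.

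\emph{Main obstacle.} The only delicate point is applying \cref{lem:quaternionic-jordan-invariants} correctly to the cubic. That lemma gives invariant polynomials on~$J_0$ itself, and the cubic integral is a polynomial in the single variable $a\in J_0$, so the lemma applies directly. The quadratic case needs polarization, which is fine because the integral is symmetric bilinear in $(a,b)$. One must also be sure the test element is not degenerate for the cubic, i.e.\ that $c(\widehat{p_0})\ne0$; the computation above gives $\frac{16}9$, so the normalization of $\kappa_3$ is legitimate.
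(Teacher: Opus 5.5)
Your proposal is correct and follows essentially the paper's proof. You pass from the design sums to Haar integrals, use \cref{lem:quaternionic-jordan-invariants} to reduce them to $\kappa_2 q$ and $\kappa_3 c$, and fix $\kappa_3=2/21$ at $a=\widehat{p_0}$ from the beta moments, which gives $32/189$ and $16/9$ exactly as in the paper. The only difference is cosmetic: you obtain $\kappa_2=4/21$ from the beta variance at $\widehat{p_0}$, whereas the paper sums over an orthonormal basis of the $14$-dimensional $J_0$ using $T(\widehat p,\widehat p)=8/3$.
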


\begin{proof}
The design property replaces each finite average by the corresponding invariant integral over~$\Hh\PP^2$.  By \cref{lem:quaternionic-jordan-invariants},
\[
 \mathcal M_2(a)=\int_{\Hh\PP^2}T(a,\,\widehat p)^2\,d\mu(p)
\]
is a scalar multiple of~$q(a)$.  Taking the trace over an orthonormal basis of the~$14$-dimensional space~$J_0$ and using \eqref{eq:quaternionic-centered-angle} gives
\[
 \mathcal M_2(a)=\frac{8/3}{14}q(a)=\frac4{21}q(a).
\]
Polarization gives \eqref{eq:quaternionic-second-moment}.

Likewise,
\[
 \mathcal M_3(a)=\int_{\Hh\PP^2}T(a,\,\widehat p)^3\,d\mu(p)
 =\kappa\Tr_J(a^3)
\]
for some~$\kappa$.  Fix~$p_0\in\Hh\PP^2$ and put $a=\widehat p_0$.  By \eqref{eq:haar-moments} and \eqref{eq:quaternionic-centered-angle},
\begin{align*}
 \mathcal M_3(\widehat p_0)
 &=\int_{\Hh\PP^2}\left(4t(p_0,\,p)-\frac43\right)^3d\mu(p)\\
 &=64\frac{(2)_3}{(6)_3}
   -64\frac{(2)_2}{(6)_2}
   +\frac{64}{3}\frac{(2)_1}{(6)_1}
   -\frac{64}{27}
 =\frac{32}{189}.
\end{align*}
The eigenvalues of~$\widehat p_0$ are~$4/3$, $-2/3$, and~$-2/3$, so
\[
 \Tr_J(\widehat p_0^3)=\frac{16}{9}.
\]
Consequently $\kappa=2/21$, proving \eqref{eq:quaternionic-third-moment}.
\end{proof}

\begin{theorem}[quaternionic cubic rigidity]\label{thm:quaternionic-cubic-rigidity}
Let~$Y$ and~$Y'$ be finite quaternionic projective~$3$-designs in~$\Hh\PP^2$, and let
\[
 \phi:Y\longrightarrow Y'
\]
be a bijection satisfying
\[
 t(\phi(p),\,\phi(q))=t(p,\,q)
 \qquad(p,\,q\in Y).
\]
Then there is a unique
\[
 g\in P\Sp(3)=\Aut(J)
\]
such that $g(p)=\phi(p)$ for every~$p\in Y$.
\end{theorem}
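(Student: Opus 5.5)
Since $t(p,q)=T(p,q)$ and the centering $\widehat p=2p-\frac23I_3$ is affine, the hypothesis says exactly that the Gram matrices $(T(\widehat p,\widehat q))_{p,q\in Y}$ and $(T(\widehat{\phi(p)},\widehat{\phi(q)}))$ agree, by \eqref{eq:quaternionic-centered-angle}. The plan is to build a linear isometry of $J_0$ from this Gram data, show it preserves the cubic form, recover the Jordan product by polarization, and then invoke $\Aut(J)=P\Sp(3)$.

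\textbf{Step 1 (linear map).} Applying \eqref{eq:quaternionic-second-moment} to $a=b$ shows that the vectors $\widehat p$, $p\in Y$, span $J_0$: any $a$ orthogonal to all of them has $q(a)=0$. The same holds for $Y'$. Define $g_0:J_0\to J_0$ by $g_0(\sum_p c_p\widehat p)=\sum_p c_p\widehat{\phi(p)}$. This is well defined and injective because a relation $\sum c_p\widehat p=0$ is equivalent to the vanishing of the Gram quadratic form $\sum c_pc_qT(\widehat p,\widehat q)$, which is the same for both sides. Thus $g_0$ is an orthogonal isomorphism of $(J_0,T)$, and it is unique, since it is prescribed on a spanning set. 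Extend it to $g$ on $J$ by $g(I_3)=I_3$. Then $g(p)=\phi(p)$ for every $p\in Y$, and $g$ preserves $\Tr_J$ and $T$.

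\textbf{Step 2 (cubic form).} Write $N=|Y|=|Y'|$. Using \eqref{eq:quaternionic-third-moment} for both designs,
\[
 \frac{2N}{21}\Tr_J\bigl((g_0a)^3\bigr)=\sum_{p'\in Y'}T(g_0a,\widehat{p'})^3=\sum_{p\in Y}T(g_0a,g_0\widehat p)^3=\sum_{p\in Y}T(a,\widehat p)^3=\frac{2N}{21}\Tr_J(a^3).
\]
Hence $g_0$ preserves $c(a)=\Tr_J(a^3)$ on $J_0$.

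\textbf{Step 3 (Jordan product; the main point).} Polarizing $c$ gives the symmetric trilinear form $T(a\circ b,d)=\Tr_J((a\circ b)\circ d)$ on $J_0$. This uses the associativity of the trace form in a Euclidean Jordan algebra. Since $T$ is nondegenerate on $J_0$ and $g_0$ is orthogonal, $g_0$ preserves the projection to $J_0$ of the Jordan product: $\pi_0(g_0a\circ g_0b)=g_0\pi_0(a\circ b)$. The $I_3$-component $\frac13 T(a,b)I_3$ is preserved by Step 1. Therefore $g(a\circ b)=g(a)\circ g(b)$ on $J_0$, and it holds trivially whenever one factor is $I_3$. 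So $g\in\Aut(J)$, and the standard identification $\Aut(\Herm_3(\Hh))=P\Sp(3)$ gives $g\in P\Sp(3)$.

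For uniqueness, any automorphism agreeing with $\phi$ on $Y$ fixes $I_3$ and agrees with $g_0$ on the spanning set $\{\widehat p\}$. The delicate point is the polarization in Step 3: one must check that the cubic invariant determines the product completely, which uses that $J_0$ carries no nonzero invariant component of the product beyond the traceless part. \Cref{lem:quaternionic-jordan-invariants} is not needed for this step, only the trace identity.
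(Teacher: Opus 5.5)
Your proposal is correct and follows the paper's proof step for step. The second moment gives spanning and the Gram-preserving orthogonal extension $g_0$, the third moment gives invariance of $\Tr_J(a^3)$, and polarization with nondegeneracy of $T$ recovers the Jordan product before invoking $\Aut(J)=P\Sp(3)$; your $\pi_0(a\circ b)$ is the paper's traceless product $a*b=a\circ b-\frac13T(a,b)I_3$, and the ``delicate point'' you flag at the end is already fully settled by your Step~3.
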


\begin{proof}
By \eqref{eq:quaternionic-centered-angle}, the bijection
\[
 \widehat p\longmapsto\widehat{\phi(p)}
\]
preserves the Gram matrix.  Equation~\eqref{eq:quaternionic-second-moment} implies that both centered configurations span~$J_0$, and hence the bijection extends uniquely to an orthogonal transformation
\[
 g_0\in O(J_0,\,T).
\]
Using \eqref{eq:quaternionic-third-moment} for~$Y$ and~$Y'$ gives, for every~$a\in J_0$,
\begin{align*}
 \frac{2|Y|}{21}\Tr_J((g_0a)^3)
 &=\sum_{p'\in Y'}T(g_0a,\,\widehat p')^3\\
 &=\sum_{p\in Y}T(g_0a,\,g_0\widehat p)^3\\
 &=\sum_{p\in Y}T(a,\,\widehat p)^3
 =\frac{2|Y|}{21}\Tr_J(a^3).
\end{align*}
Thus~$g_0$ preserves both the trace form and the cubic $c(a)=\Tr_J(a^3)$.

The symmetric trilinear form
\[
 C(a,\,b,\,c)=T(a\circ b,\,c),
 \qquad a,\,b,\,c\in J_0,
\]
is the polarization of~$a\mapsto\Tr_J(a^3)$.  Define the traceless Jordan product by
\[
 a*b=a\circ b-\frac13T(a,\,b)I_3.
\]
For~$c\in J_0$ one has $T(a*b,\,c)=C(a,\,b,\,c)$.  Preservation of~$T$ and~$C$, together with nondegeneracy of~$T$, therefore gives
\[
 g_0(a*b)=g_0(a)*g_0(b).
\]
Extend~$g_0$ to $J=\R I_3\oplus J_0$ by fixing~$I_3$.  For real
scalars~$s$ and~$t$,
\[
 (sI_3+a)\circ(tI_3+b)
 =\left(st+\frac13T(a,\,b)\right)I_3
  +sb+ta+a*b,
\]
so the extension is a Jordan automorphism, hence belongs to~$P\Sp(3)$.  It carries~$p$ to~$\phi(p)$ because it carries~$\widehat p$ to~$\widehat{\phi(p)}$.  Uniqueness follows from the spanning of the centered configuration.
\end{proof}

\begin{theorem}[maximality and geometric rigidity for the prescribed angles]\label{thm:HJ-prescribed-angle-uniqueness}
Let $Y\subset\Hh\PP^2$ have all projective inner products between
distinct points in \eqref{eq:five-angle-set}.  Then~$|Y|\le315$,
with equality if and only if~$Y$ is~$P\Sp(3)$-equivalent to the
Hall--Janko configuration~$X$.
\end{theorem}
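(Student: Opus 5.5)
The bound $|Y|\le315$ is exactly \cref{lem:hoggar-bound-equality}; infinite sets are excluded there as well. So the proof reduces to the equality case, and I will treat the two directions separately.

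For the easy direction, assume $Y=g(X)$ for some $g\in P\Sp(3)$. The group $P\Sp(3)=\Aut(J)$ preserves the trace form $T$, and hence preserves the projective inner products $t(p,q)=T(p,q)$. By \cref{prop:extremal-shell-angles}, applied to the extremal lattice $\HJ$, the off-diagonal inner products of $X$ lie in \eqref{eq:five-angle-set}. By \cref{lem:primitive-shell-lines}, $X$ consists of exactly $315$ distinct points. Both properties transfer to $g(X)$, which settles this direction.

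For the converse, let $|Y|=315$ with the prescribed angles. First, \cref{lem:hoggar-bound-equality} shows that $Y$ is a quaternionic projective $5$-design, hence in particular a $3$-design. Next, \cref{cor:prescribed-angle-scheme-uniqueness} provides a bijection $\phi\colon Y\to X$ preserving all projective inner products. That corollary is where the hard combinatorial input enters: the forced intersection numbers (\cref{thm:angle-association-scheme}), Cohen--Tits uniqueness of the fused near octagon, uniqueness of the golden fission up to exchange (\cref{prop:golden-fission-uniqueness}), and the Galois permutation $\iota$ correcting a possible exchange.

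We also need $X$ itself to be a $3$-design. This follows from \cref{thm:rank3-venkov} for $\HJ$, or directly from \cref{lem:hoggar-bound-equality}, since $X$ has $315$ points with the prescribed angles. We may then apply \cref{thm:quaternionic-cubic-rigidity} to $\phi\colon Y\to X$. It yields a unique $g\in P\Sp(3)$ with $g(p)=\phi(p)$ for all $p\in Y$, so $g(Y)=X$ and $Y$ is $P\Sp(3)$-equivalent to $X$.

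All substantial work has been done in the cited results, so the argument is essentially a chaining of them. The one point to check carefully is that the hypotheses match. \Cref{cor:prescribed-angle-scheme-uniqueness} requires only the cardinality and the angle set, with the design property derived inside it. \Cref{thm:quaternionic-cubic-rigidity} requires both configurations to be finite $3$-designs and $\phi$ to be a bijection preserving all inner products, including the diagonal values $t=1$, which hold trivially. Since the design property was obtained above from Hoggar equality, no circularity arises.
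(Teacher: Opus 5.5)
Your proposal is correct and follows the paper's proof essentially step for step: Hoggar's bound and its equality case (\cref{lem:hoggar-bound-equality}), the bijection preserving projective inner products from \cref{cor:prescribed-angle-scheme-uniqueness}, and \cref{thm:quaternionic-cubic-rigidity} applied to the two $3$-designs, with the converse coming from the fact that $P\Sp(3)$ preserves $T$. Your explicit check that $X$ has $315$ distinct points with the prescribed angles, via \cref{lem:primitive-shell-lines} and \cref{prop:extremal-shell-angles}, spells out what the paper states in one line.
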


\begin{proof}
The bound and the projective~$5$-design property at equality follow
from \cref{lem:hoggar-bound-equality}.  If $|Y|=315$,
\cref{cor:prescribed-angle-scheme-uniqueness} supplies a
bijection~$Y\to X$ preserving all projective inner products.  Both sets
are projective~$3$-designs, so \cref{thm:quaternionic-cubic-rigidity}
shows that this bijection is induced by an element of~$P\Sp(3)$.
Conversely, every~$P\Sp(3)$-image of~$X$ has~$315$ points and the
prescribed projective inner products.
\end{proof}

\begin{corollary}[geometric and combinatorial symmetries]\label{cor:projective-stabilizer}
For the Hall--Janko configuration,
\[
 \operatorname{Stab}_{P\Sp(3)}(X)\cong J_2,
 \qquad \Aut(\mathcal N_X)\cong J_2:2.
\]
The Galois permutation~$\iota$ represents the nontrivial outer
symmetry and is not induced by~$P\Sp(3)$.
\end{corollary}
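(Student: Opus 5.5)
The plan has four steps: identify the stabilizer in $P\Sp(3)$ with the image of the lattice automorphism group, show it is all of $J_2$, realize $\Aut(\mathcal N_X)$ as that group plus $\iota$, and prove $\iota$ is not geometric.

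\textbf{Step 1: the stabilizer is $J_2$.} Let $G=\operatorname{Stab}_{P\Sp(3)}(X)$. The group $\Aut_{\cI}(\HJ)\cong 2.J_2$ acts on $X$ at the real place $\sigma_1$. The central element $-1$ acts trivially on $X$. Any element acting trivially on the spanning set $\widehat X\subset J_0$ is trivial in $P\Sp(3)$, by the spanning statement in \cref{thm:quaternionic-cubic-rigidity}. Since $J_2$ is simple, the kernel of the action is exactly $\{\pm1\}$, so $J_2\hookrightarrow G$.

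For the reverse inclusion, take $g\in G$. It permutes $X$ preserving projective inner products, so it is an automorphism of the five-class scheme of \cref{thm:angle-association-scheme}. In particular it is an automorphism of $\mathcal N_X$ that fixes $R_4$ and $R_5$ individually. The main step is to show that $\operatorname{Aut}(\mathcal N_X)$ has order $2|J_2|$. Here I would quote the Cohen--Tits description $\operatorname{Aut}(\mathcal N_X)\cong J_2{:}2$; it is the external input also used for uniqueness. The image of $J_2$ preserves each golden relation, whereas $\iota$ exchanges them (\cref{prop:golden-galois-symmetry}), so $\iota\notin J_2$. Because $J_2$ has index $2$, we get $\Aut(\mathcal N_X)=J_2\langle\iota\rangle$.

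Every element of $G$ preserves both golden relations. The coset $J_2\iota$ consists of elements that swap them, since $J_2$ preserves each relation and $\iota$ exchanges them. Hence $G\le J_2$, and so $G=J_2$. Faithfulness of $G\to\operatorname{Sym}(X)$ again follows from spanning.

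\textbf{Step 2: $\iota$ is not geometric.} Elements of $P\Sp(3)$ preserve $t$, while $\iota$ sends pairs at $g_+$ to pairs at $g_-$. So $\iota$ is induced by no element of $P\Sp(3)$.

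\textbf{Step 3: $\iota$ is outer.} Since $\Aut(J_2)=J_2{:}2$ and $C_{\Aut(\mathcal N_X)}(J_2)$ is trivial, it suffices to show $\iota$ induces an outer automorphism of $J_2$. If conjugation by $\iota$ agreed with conjugation by some $h\in J_2$, then $h^{-1}\iota$ would centralize $J_2$. The group $J_2$ acts transitively on $X$ (\cref{rem:point-transitivity}), and the point stabilizer has no fixed points other than the base point. A centralizing permutation then commutes with a transitive action with self-normalizing stabilizers and must be trivial. This would give $\iota=h\in J_2$, contradicting Step 2.

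\textbf{Main obstacle.} The hardest part is the upper bound $|\Aut(\mathcal N_X)|\le 2|J_2|$. I would rely on the Cohen--Tits classification, which identifies this near octagon as the $J_2{:}2$ geometry. Alternatively, one can bound the stabilizer of the frame flag via the $(2,4;0,3)$ structure. If that is unavailable, I would count instead: $|\Aut(\mathcal N_X)|\le 525\cdot|\text{line stabilizer}|$, with the line stabilizer controlled by the $F_4$-code analysis of \cref{prop:code-stabilizer} extended by $\iota$.
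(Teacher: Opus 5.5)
Your proposal is correct and follows essentially the paper's argument: faithfulness from the spanning property in \cref{thm:quaternionic-cubic-rigidity}, the chain $J_2\le\operatorname{Stab}_{P\Sp(3)}(X)\le\Aut(\mathcal N_X)\cong J_2{:}2$, and the fact that $\iota$ lies in $\Aut(\mathcal N_X)$ but not in $P\Sp(3)$, which forces the stabilizer to be the index-two subgroup $J_2$. Your Step~3 only adds an explicit trivial-centralizer argument for why the nontrivial coset gives outer automorphisms, and one attribution should change: the paper takes $\Aut(\mathcal N_X)\cong J_2{:}2$ from De Wispelaere--Van Maldeghem, because the Cohen--Tits uniqueness theorem by itself does not determine the automorphism group.
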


\begin{proof}
The geometric stabilizer acts faithfully on~$X$ by
\cref{thm:quaternionic-cubic-rigidity}.  It contains the projective
image~$2.J_2/\{\pm1\}\cong J_2$ of the lattice group and embeds in
the full near-octagon automorphism group~$J_2:2$
\cite{DeWispelaereVanMaldeghem}.
The latter also contains~$\iota$, which exchanges two distinct real
inner products and therefore cannot belong to~$P\Sp(3)$.
The geometric stabilizer is consequently the index-two subgroup~$J_2$, and~$\iota$ lies in the outer coset.
\end{proof}

\begin{remark}[scope of the uniqueness statement]\label{rem:maximality-uniqueness}
The angle set is an explicit hypothesis in
\cref{thm:HJ-prescribed-angle-uniqueness}; the projective~$5$-design
property is forced by attaining the bound~$315$.  This does not prove
the converse implication from cardinality and design strength to the
angle set.  Under the angle hypothesis, equality in the bound,
Cohen--Tits uniqueness, the elementary fission argument, the internal
Galois symmetry, and cubic rigidity leave no combinatorial or
geometric ambiguity.
\end{remark}

\begin{problem}[unconditional angle rigidity]\label{prob:unconditional-angle-set}
Does every~$315$-point quaternionic projective~$5$-design in~$\Hh\PP^2$ necessarily have the five off-diagonal projective inner products in \eqref{eq:five-angle-set}?  An affirmative answer would imply that every~$315$-point quaternionic
projective~$5$-design is~$P\Sp(3)$-equivalent to~$X$.
\end{problem}

The same~$525$ frames link the arithmetic and geometry: their
projective moments determine the incidence structure, while one
framed reduction closes the mass calculation.  The cubic moment then
recovers the quaternionic Jordan product, completing the passage
from the arithmetic genus to the finite geometry and its rigid
projective realization.

\section*{Acknowledgments}
The author's interest in Hilbert modular surfaces and in the code--lattice correspondence goes back to Friedrich Hirzebruch, his advisor.  In particular, he learned the relation between self-dual quinary codes, cyclotomic lattices, and Hilbert modular forms in Hirzebruch's Bonn lecture course \emph{Kodierungstheorie und Beziehungen zur Geometrie} in the winter semester 1986/87, which he attended.  Hirzebruch's geometric interpretation of the Hilbert modular surface for~$\Q(\sqrt5)$ through the Clebsch--Klein cubic surface is particularly consonant with the icosahedral arithmetic used here.  The author is also grateful to the late Zvonimir Janko for their discussions about sporadic groups during the author's stay in Heidelberg in the academic year 2003/2004.

\end{document}